\DeclareMathAlphabet{\mathbbo}{U}{bbold}{m}{n}
\newcommand{\indic}{\mathbbo{1}}
\begin{document}

\numberwithin{equation}{section}

\newtheoremstyle{thmstyle}{3pt}{3pt}{\slshape}{}{\bf}{.}{.5em}{}      
\newtheoremstyle{defstyle}{3pt}{3pt}{\sffamily}{}{\bf}{.}{.5em}{}       

\theoremstyle{thmstyle}
\newtheorem{thm}{Theorem}[section]
\newtheorem{ppsition}[thm]{Proposition}

\newtheorem{lemma}[thm]{Lemma}
\newtheorem{cor}[thm]{Corollary}

\theoremstyle{defstyle}
\newtheorem{definition}[thm]{Definition}
\newtheorem{example}[thm]{Example}
\newtheorem*{example*}{Example}
\newtheorem{remark}[thm]{Remark}
\newtheorem{notation}[thm]{Notation}
\newtheorem{question}[thm]{Question}

\newcommand{\wt}[1]{\widetilde{#1}}

\newcommand{\var}{\mathop{\rm Var}}

\renewcommand{\AA}{{\EuScript A}}
\newcommand{\BB}{{\EuScript B}}
\newcommand{\CC}{{\EuScript C}}
\newcommand{\DD}{{\EuScript D}}
\newcommand{\EEE}{\EuScript{E}}
\newcommand{\FF}{\EuScript{F}}
\newcommand{\GG}{\EuScript{G}}
\newcommand{\HH}{\EuScript{H}}
\newcommand{\II}{\EuScript{I}} 
\newcommand{\JJ}{\EuScript{J}}
\newcommand{\LL}{\EuScript{L}}
\newcommand{\MM}{\EuScript{M}}
\newcommand{\N}{\mathbb{N}}
\renewcommand{\P}{\mathcal{P}}
\newcommand{\PP}{\mathbb{P}}
\newcommand{\RR}{\mathbb{R}}
\renewcommand{\SS}{\EuScript{S}}
\newcommand{\UU}{\EuScript{U}}
\newcommand{\VV}{{\EuScript V}}
\newcommand{\Z}{\mathbb{Z}}
\newcommand{\EE}{\mathbb{E}}
\newcommand{\EEtil}{\widetilde{\EE}}
\newcommand{\PPbar}{\overline{\mathbb{P}}}
\newcommand{\sig}{\sigma}
\newcommand{\eps}{\varepsilon}

\newcommand{\Eb}{\boldsymbol{E}}
\newcommand{\Vb}{\boldsymbol{V}}

\newcommand{\OmAP}{(\Omega, {\EuScript A}, \mathbb{P})}
\newcommand{\OmAPprime}{(\Omega', {\EuScript A}', \mathbb{P}')}

\newcommand{\EEEE}{\EEE={(\EEE_n)}_{n \leq 0}}
\newcommand{\FFFF}{\FF={(\FF_n)}_{n \leq 0}}
\newcommand{\GGGG}{\GG={(\GG_n)}_{n \leq 0}}
\newcommand{\UUUU}{\UU={(\UU_n)}_{n \leq 0}}
\newcommand{\VVVV}{\VV={(\VV_n)}_{n \leq 0}}

\newcommand{\longto}{\longrightarrow}

\newcommand{\given}{\, | \,}

\let\leq=\leqslant
\let\geq=\geqslant

\def\dd{\mathrm{dd}}%

\newcommand{\myminipage}[3]{
\begin{minipage}[t][#2][c]{#1}
#3
\end{minipage}
}

\newcommand{\leqst}{\overset{\text{st}}{\leq}}
	\newcommand{\tend}[3][]{\xrightarrow[#2\to#3]{#1}}
	\def\egdef{:=}

\renewcommand{\Pr}{{\PP}}
\newcommand{\Bin}{\text{Bin}}

\newcommand\numberthis{\addtocounter{equation}{1}\tag{\theequation}}

\title{Standardness of monotonic Markov filtrations}
\date{}
\author[1]{\'Elise Janvresse\thanks{elise.janvresse@univ-rouen.fr}} 
\author[2]{Stéphane Laurent\thanks{laurent\_step@yahoo.fr}}
\author[1]{Thierry de la Rue\thanks{thierry.de-la-rue@univ-rouen.fr}}

\affil[1]{Laboratoire de Math\'ematiques Rapha\"el Salem,
	Normandie~Université, Universit\'e~de~Rouen, CNRS}
\affil[2]{Independent researcher}
        
\maketitle

\begin{abstract}
We derive a practical standardness criterion for the filtration generated by 
a monotonic Markov process. 
This criterion is applied to show standardness  of some adic filtrations.  
\end{abstract}

{\bf Keywords: }Standardness of filtrations; monotonic Markov processes; Adic filtrations; Vershik's intrinsic metrics.

{\bf MSC classification: } 60G05, 60J05, 60B99, 05C63.

\tableofcontents

\section{Introduction}

The theory of filtrations in discrete negative time was originally 
developed by Vershik in the 70's. 
It mainly deals with the identification of standard filtrations. 
Standardness is an invariant property of filtrations $\FFFF$ in discrete negative time, whose definition is recalled below (Definition~\ref{def:standard}). 
It only concerns the case when the $\sigma$-field $\FF_0$ is essentially separable, 
and in this situation one can always find a Markov process\footnote{By \emph{Markov process} 
we mean any stochastic process ${(X_n)}_{n \leq 0}$ satisfying the Markov property, 
but no stationarity and no homogeneity in time are required.}
${(X_n)}_{n \leq 0}$ that generates the filtration $\FF$ by taking 
for $X_n$ any random variable generating the $\sigma$-field $\FF_n$ for every $n \leq 0$. 

 In Section~\ref{sec:markovstandard}, we provide two 
 standardness criteria for a filtration 
 given as generated by a  Markov process. 
 The first one, Lemma~\ref{lemma:PWgenerating}, is a somewhat elementary criterion involving a 
 construction we call the \emph{Propp-Wilson coupling} (Section~\ref{sec:PWcoupling}). 
The second one, Lemma~\ref{lemma:VershikMarkov}, is borrowed from~\cite{LauEntropy}. It  
 is a particular form of Vershik's standardness criterion 
which is known to be equivalent to standardness (see~\cite{ES}). 

The main result of this paper is stated and proved in Section~\ref{sec:monotonicmarkov} (Theorem~\ref{thm:monotonic}): It provides a very convenient standardness criterion for 
 filtrations which are given as generated by a \emph{monotonic} 
 Markov process ${(X_n)}_{n \leq 0}$ (see Definition~\ref{def:monotonic}). 
 It is generalized in Section~\ref{sec:multimarkov} (Theorem \ref{thm:monotonicmulti}) 
 to multidimensional Markov processes.

There is a revival interest in standardness due to the recent works 
of Vershik~\cite{Ver13,VerIntrinsic,VerGraded} which connect the theory of filtrations 
to the problem of identifying ergodic central measures on Bratteli graphs, 
which is itself closely connected to other problems of mathematics. 
As we explain in Section~\ref{sec:adic}, an ergodic central measure 
on (the path space of) a Bratteli graph generates a filtration 
we call an adic filtration, and the recent discoveries by Vershik 
mainly deal with standardness of adic filtrations. 
Using our standardness criterion for the filtration of a monotonic 
Markov process, 
we show standardness for some adic filtrations 
arising from the Pascal graph and the Euler graph 
in the subsequent sections~\ref{sec:Pascal}, 
\ref{sec:Euler} and~\ref{sec:mpascal}. 
As a by-product, our results also provide a new proof of ergodicity 
of some adic transformations on these graphs. 
We also discuss the case of non-central measures.

%

\subsection{Standardness}

A filtration $\FFFF$ is said to be \emph{immersed} in a filtration $\GGGG$ if $\FF \subset \GG$ and for each $n< 0$, the \hbox{$\sigma$-field} $\FF_{n+1}$ is conditionally independent of $\GG_n$ given $\FF_n$. 
When $\FF$ is the filtration generated by a Markov process  ${(X_n)}_{n \leq 0}$, then 
saying that $\FF$ is immersed in some filtration $\GG$ tantamounts to say that $\FF \subset \GG$ 
and that ${(X_n)}_{n \leq 0}$ has the Markov property with respect to the bigger filtration 
$\GG$, that is, 
$${\cal L}(X_{n+1} \given \GG_{n}) = {\cal L}(X_{n+1} \given \FF_{n}) ={\cal L}(X_{n+1} \given X_{n})$$ 
for every $n<0$. 

A filtration is said to be of \emph{product type} if it is generated by a sequence of 
independent random variables.

\begin{definition}\label{def:standard}
A filtration $\FF$ is said to be \emph{standard} when it is immersed in a filtration 
of product type, possibly up to isomorphism (in which case we 
say that  $\FF$ is \emph{immersible} in a  filtration of product type).
\end{definition}

When ${(X_n)}_{n \leq 0}$ is any stochastic process generating the filtration $\FF$, 
then a filtration isomorphic to $\FF$ is a filtration generated by a 
\emph{copy} of ${(X_n)}_{n \leq 0}$, that is to say a stochastic process 
${(X'_n)}_{n \leq 0}$ defined on any probability space and having the same law 
as  ${(X_n)}_{n \leq 0}$.  

By Kolmogorov's $0$-$1$ law, a necessary condition for standardness is that 
the filtration $\FF$ be \emph{Kolmogorovian}, that is to say that 
the tail $\sigma$-algebra $\FF_{-\infty}$ be degenerate\footnote{The introduction of 
the word \emph{Kolmogorovian} 
firstly occured in \cite{LauXLIII} and \cite{LauTeoriya} and was motivated by the 
so-called Kolmogorov's $0$-$1$ law in the case of a product type filtration. 
By the correspondance between $(-\N)$-indexed filtrations and 
$\N$-indexed decreasing sequences of measurables partitions, one could also say \emph{ergodic}, 
because 
this property is equivalent to ergodicity of the equivalence relation defined by 
the tail partition.}.


\subsection{Generating parameterization criterion}

We prove in this section that a filtration having a generating 
parameterization is standard, after introducing the required definitions.
Constructing a generating parameterization is a frequent way 
to establish standardness in practice.

\begin{definition}\label{def:superinnovation}
Let $\FFFF$ be a filtration. 
A \emph{parameterization} of $\FF$   is a sequence of (independent) random
variables  ${\boldsymbol U}={(U_n)}_{n \leq 0}$ 
such that for each $n \leq 0$, the random variable $U_n$ 
 is independent of ${\FF_{n-1} \vee \sig(U_m;  m \leq n-1)}$,
 and satisfies $\FF_n \subset \FF_{n-1} \vee \sigma(U_n)$. 
 We say that the parameterization ${\boldsymbol U}$ is \emph{generating} if 
$\FF \subset \UU$, where $\UU$ is the filtration generated by ${\boldsymbol U}$. 
\end{definition}


It is shown in~\cite{LauXLIII} that, up to isomorphism, every 
 filtration $\FF$ having an essentially separable 
$\sigma$-field $\FF_0$ has 
a parameterization ${(U_n)}_{n \leq 0}$ where each $U_n$ has a uniform distribution 
on $[0,1]$. 

The following lemma is shown in~\cite{LauTeoriya}. 
It is the key point to show that a filtration having a generating 
parameterization is standard (Lemma~\ref{lemma:generatingparam}). 

\begin{lemma}\label{lemma:extension_superinnovation}
Let $\FF$ be a filtration having a parameterization 
 ${\boldsymbol U}={(U_n)}_{n \leq 0}$, and  
let $\UUUU$ be the filtration generated by ${\boldsymbol U}$. 
Then $\FF$ and $\UU$ are both immersed in  the filtration $\FF \vee \UU$.
\end{lemma}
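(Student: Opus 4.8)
The plan is to verify directly the two conditional-independence requirements in the definition of immersion, the inclusions $\FF \subset \FF \vee \UU$ and $\UU \subset \FF \vee \UU$ being automatic. Writing $\UU_n = \sig(U_m ; m \leq n)$ for the $\sigma$-fields of the filtration $\UU$, the two defining properties of a parameterization read, after shifting the index by one, as follows: for every $n < 0$ one has $\FF_{n+1} \subset \FF_n \vee \sig(U_{n+1})$, and $U_{n+1}$ is independent of $\FF_n \vee \UU_n$. These are exactly the two inputs I will feed into the argument, the second being the crucial one.

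The single technical tool I would isolate first is the elementary observation that independence localizes to conditional independence below a sub-$\sigma$-field: if $\sig(X)$ is independent of a $\sigma$-field $\mathcal G$ and $\mathcal H \subset \mathcal G$, then $\sig(X)$ and $\mathcal G$ are conditionally independent given $\mathcal H$. This follows from testing against an $\mathcal H$-measurable indicator and using that full independence of $X$ from $\mathcal G \supset \mathcal H$ forces $\EE[f(X) \given \mathcal H] = \EE[f(X)]$, extended by a monotone-class argument. I would pair it with two routine stability properties of conditional independence: passing to a sub-$\sigma$-field on one side preserves it, and adjoining the conditioning $\sigma$-field to one side preserves it (that is, $\mathcal A$ and $\mathcal B$ conditionally independent given $\mathcal C$ implies $\mathcal A \vee \mathcal C$ and $\mathcal B$ conditionally independent given $\mathcal C$, since $\mathcal C$-measurable factors pull out of the conditional expectation given $\mathcal C$).

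For the immersion of $\FF$ in $\FF \vee \UU$, I apply the tool with $X = U_{n+1}$, $\mathcal G = \FF_n \vee \UU_n$ and $\mathcal H = \FF_n$: because $U_{n+1}$ is independent of $\FF_n \vee \UU_n$, the field $\sig(U_{n+1})$ is conditionally independent of $\FF_n \vee \UU_n$ given $\FF_n$. Adjoining $\FF_n$ shows that $\FF_n \vee \sig(U_{n+1})$ is conditionally independent of $\FF_n \vee \UU_n$ given $\FF_n$, and then restricting to the sub-$\sigma$-field $\FF_{n+1} \subset \FF_n \vee \sig(U_{n+1})$ gives precisely the conditional independence of $\FF_{n+1}$ and $(\FF \vee \UU)_n = \FF_n \vee \UU_n$ given $\FF_n$, which is the immersion condition for $\FF$.

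For the immersion of $\UU$, I run the identical argument but with $\mathcal H = \UU_n$ in place of $\FF_n$: the same independence of $U_{n+1}$ from $\FF_n \vee \UU_n$ yields that $\sig(U_{n+1})$ is conditionally independent of $\FF_n \vee \UU_n$ given $\UU_n$, and adjoining $\UU_n$ gives that $\UU_{n+1} = \UU_n \vee \sig(U_{n+1})$ is conditionally independent of $(\FF \vee \UU)_n$ given $\UU_n$. I do not anticipate a genuine obstacle: everything rests on the single independence hypothesis of the parameterization, and the only real care needed is bookkeeping — keeping straight which $\sigma$-fields play the roles of $\mathcal G$ and $\mathcal H$, and invoking the two stability properties of conditional independence at the right moments.
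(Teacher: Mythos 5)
Your proof is correct. Note that the paper itself contains no proof of this lemma: it is quoted from~\cite{LauTeoriya}, so there is no in-paper argument to compare against. Your argument is the standard one and is complete as written: the localization tool (if $\sig(X)$ is independent of $\GG$ and $\HH\subset\GG$, then $\sig(X)$ and $\GG$ are conditionally independent given $\HH$) is exactly the right way to exploit the single independence hypothesis $U_{n+1}\perp\FF_n\vee\sig(U_m;m\le n)$ of the parameterization, and the two stability facts you invoke (restriction to a sub-$\sigma$-field on one side, and adjoining the conditioning $\sigma$-field to one side) are both standard and applied correctly. The bookkeeping checks out in both cases: for $\FF$ you use $\FF_{n+1}\subset\FF_n\vee\sig(U_{n+1})$ to restrict after adjoining $\FF_n$, and for $\UU$ the identity $\sig(U_m;m\le n)\vee\sig(U_{n+1})=\sig(U_m;m\le n+1)$ makes the adjoining step land precisely on the immersion condition for $\UU$ in $\FF\vee\UU$.
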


\begin{definition}\label{def:extension_superinnovation}
The filtration  $\FF \vee \UU$ in the above lemma is called the
 \emph{extension of $\FF$ with the parameterization} ${\boldsymbol U}$, and 
is also said to be a \emph{parametric extension} of $\FF$.
\end{definition}

\begin{lemma}\label{lemma:generatingparam}
If ${\boldsymbol U}$ is a generating parameterization of the filtration $\FF$, then $\FF$ as well as 
$\FF\vee\UU$ are standard. 
\end{lemma}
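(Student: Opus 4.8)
The plan is to exploit the fact that the generating hypothesis forces the parametric extension $\FF \vee \UU$ to coincide with the product-type filtration $\UU$ itself, after which both standardness claims reduce to Definition~\ref{def:standard} and Lemma~\ref{lemma:extension_superinnovation}.

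First I would check that $\UU$ is of product type. By the independence requirement in Definition~\ref{def:superinnovation}, each $U_n$ is independent of $\FF_{n-1} \vee \sig(U_m;\, m \leq n-1)$, and in particular independent of $\sig(U_m;\, m \leq n-1)$. A routine induction on this relation shows that the whole family ${(U_n)}_{n \leq 0}$ is mutually independent, so the filtration $\UU$ it generates is of product type by definition.

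Next I would invoke the generating hypothesis $\FF \subset \UU$, that is, $\FF_n \subset \UU_n$ for every $n \leq 0$. This gives ${(\FF \vee \UU)}_n = \FF_n \vee \UU_n = \UU_n$, so that $\FF \vee \UU = \UU$. Standardness of $\FF \vee \UU$ is then immediate: it equals the product-type filtration $\UU$, and every filtration is trivially immersed in itself (the conditional independence condition is vacuous when the larger filtration is the filtration itself), whence $\UU$ is standard by Definition~\ref{def:standard}.

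Finally, for $\FF$ itself, I would apply Lemma~\ref{lemma:extension_superinnovation}, which asserts that $\FF$ is immersed in the parametric extension $\FF \vee \UU$. Since we have just identified $\FF \vee \UU$ with the product-type filtration $\UU$, this says precisely that $\FF$ is immersed in a filtration of product type, which is exactly the definition of standardness. The only point deserving any care is the verification that $\UU$ is genuinely product-type, namely that the sequential independence condition upgrades to mutual independence of ${(U_n)}_{n \leq 0}$; beyond that there is no real obstacle, the argument being a direct chaining of the definitions with the already-established Lemma~\ref{lemma:extension_superinnovation}.
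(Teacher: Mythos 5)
Your proof is correct and follows essentially the same route as the paper's: under the generating hypothesis $\FF\vee\UU=\UU$, which is of product type and hence standard, and then $\FF$ is standard because Lemma~\ref{lemma:extension_superinnovation} immerses it in $\UU$. Your extra verification that the sequential independence in Definition~\ref{def:superinnovation} yields mutual independence of ${(U_n)}_{n\le 0}$ is a sound (and welcome) elaboration of a point the paper takes for granted.
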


\begin{proof}
Obviously  $\FF \vee \UU$ is standard because 
 $\UU$ is standard (even of product type), and $\FF \vee \UU=\UU$ under the generating assumption.  
Then the filtration $\FF$ is standard as well, because by Lemma~\ref{lemma:extension_superinnovation} it is immersed in the filtration $\UU$.
\end{proof}

Whether any standard filtration admits a generating parameterization is an open 
question of the theory of filtrations. 

\section{Standardness for the filtration of a Markov process}\label{sec:markovstandard}

From now on, we consider a Markov process ${(X_n)}_{n \leq 0}$ where, for each $n$, $X_n$ takes its values in a standard Borel space $A_n$, and whose transition probabilities are given by the sequence of kernels ${(P_n)}_{n\le 0}$: For each $n\le0$ 
 and each measurable subset $E\subset A_n$, 
$$ \PP(X_n\in E \given X_{n-1}) = P_n(X_{n-1},E) \quad\text{\it a.s.} $$

We denote by $\FF$ the filtration generated by ${(X_n)}_{n \leq 0}$. 
In this section, we provide two practical criteria to establish standardness of 
$\FF$: 
the Propp-Wilson coupling in  Section~\ref{sec:PWcoupling} 
(Lemma~\ref{lemma:PWgenerating}) and 
a simplified form of Vershik's standardness criterion in Section~\ref{sec:vershik} 
(Lemma~\ref{lemma:VershikMarkov}, borrowed from~\cite{LauEntropy}). 
Recall that any filtration having an essentially separable final $\sigma$-field $\FF_0$ can always be generated by a Markov process ${(X_n)}_{n \leq 0}$. 
But practicality of the standardness criteria we present in this section 
lies on the choice of the generating Markov process. 

The Propp-Wilson coupling is a practical criterion 
to construct a generating parameterization of $\FF$. 
It will be used to prove our standardness criterion for monotonic Markov 
processes (Theorem~\ref{thm:monotonic}) which is the main 
result of this article. 
The simplified form of Vershik's standardness criterion we provide 
in Lemma~\ref{lemma:VershikMarkov} will not be used to prove  Theorem~\ref{thm:monotonic}, 
but the iterated Kantorovich pseudometrics $\rho_n$ introduced to state this 
criterion will play an important role in the proof of Theorem~\ref{thm:monotonic}, 
and they will also appear in Section~\ref{sec:adic} as the \emph{intrinsic metrics} 
in the particular context of adic filtrations. 
Lemma~\ref{lemma:VershikMarkov} itself will only be used in section 
\ref{sec:mpascal}. 

 The general statement of Vershik's standardness criterion concerns
 an arbitrary filtration $\FF$ 
 and it is known to be equivalent to standardness as long as the final $\sigma$-field 
 $\FF_0$ is essentially separable. 
Its statement is simplified in Lemma~\ref{lemma:VershikMarkov}, mainly because 
it is specifically stated for the case when $\FF$ is the filtration of 
the Markov process ${(X_n)}_{n \leq 0}$, together with an identifiability assumption  
on the Markov kernels $P_n$.

\subsection{Markov updating functions and the Propp-Wilson coupling}\label{sec:PWcoupling} 

For the filtration $\FF$ generated by the Markov process ${(X_n)}_{n \leq 0}$, it is possible to have, up to isomorphism,
 a parameterization ${(U_n)}_{n \leq 0}$  of $\FF$ with the additional property
 $$\sig(X_{n}) \subset \sig(X_{n-1},U_{n}) \quad\text{for each $n\le0$}.$$  
This fact is shown in~\cite{LauTeoriya} but we will consider it from another point of view here. 
The above inclusion  means that $X_n = f_n(X_{n-1},U_n)$ for some measurable function $f_n$.
Such a function is appropriate when it is an \emph{updating function} of the 
Markov kernel $P_n$, that is to say a measurable function 
$f_n\colon (x,u)\mapsto f_n(x,u)\in A_n$ such that  
$f_n(x,\cdot)$ sends the distribution law of $U_n$ to $P_n(x,\cdot)$
for each $x \in A_{n-1}$. 

Such updating functions, associated to random variables $U_n$ which are uniformly distributed in $[0,1]$, always exist. Indeed, there is no loss of generality to assume that 
each $X_n$ takes its values in $\RR$. Then, the most common choice of 
$f_n$ is the \emph{quantile updating function}, defined as the inverse of the right-continuous cumulative 
distribution function of the conditional law $\LL(X_n \given X_{n-1}=x)=P_n(x,\cdot)$: 
\begin{equation}
  \label{eq:quantile}\mbox{For }0<u<1, \quad 
f_n(x, u) = \inf\left\{t \in\RR: \PP\bigl(X_n\le  t\ |\ X_{n-1}=x\bigr)\ge u\right\}.
\end{equation}

Once the updating functions $f_n$ are given, it is not difficult to get, up to isomorphism, 
a parameterization ${(U_n)}_{n \leq 0}$ for which $X_n = f_n(X_{n-1},U_n)$, using the Kolmogorov extension theorem. We then say that ${(X_n)}_{n \leq 0}$  \emph{is parameterized by}  ${(f_n, U_n)}_{n \leq 0}$ 
 and that  ${(f_n, U_n)}_{n \leq 0}$ is a \emph{parametric representation} 
 of ${(X_n)}_{n \leq 0}$.

\medskip   

Given a parametric representation ${(f_n, U_n)}_{n\le0}$ of 
${(X_n)}_{n \leq 0}$,  
the \emph{Propp-Wilson coupling} is a practical tool to check whether ${(U_n)}_{n \leq 0}$ 
 is a generating parameterization 
of the filtration $\FF$ generated by  ${(X_n)}_{n \leq 0}$. 
 Given $n_0 \leq -1$ and a point $x_{n_0}$ in $A_{n_0}$, 
there is a natural way to construct, on the same probability space,
 a Markov process ${\bigl(Y_n(n_0,x_{n_0})\bigr)}_{n_0 \leq n \leq 0}$ with initial condition 
 $Y_{n_0}(n_0,x_{n_0})=x_{n_0}$ and having the same transition kernels as 
 ${(X_n)}_{n_0 \leq n \leq 0}$:  
 It suffices to set the initial condtion $Y_{n_0}(n_0,x_{n_0})=x_{n_0}$ and to 
 use the inductive relation
 $$ \forall n_0\le n < 0,\quad Y_{n+1}(n_0,x_{n_0})\egdef f_{n+1}\Bigl(Y_n(n_0,x_{n_0}),U_{n+1}\Bigr). $$

 We call this construction the \emph{Propp-Wilson coupling} because it is a well-known 
 construction used in Propp and Wilson's coupling-from-the-past algorithm~\cite{PW}. 
The word ``coupling'' refers to the fact that the random variables $Y_n$ are constructed on the same probability 
space as the Markov process  ${(X_n)}_{n \leq 0}$. 
The following lemma shows how to use the Propp-Wilson coupling to prove 
the generating property of ${(U_n)}_{n \leq 0}$.

\begin{lemma}\label{lemma:PWgenerating}
Assume that, for every $n\le 0$, the state space $A_n$ of $X_n$ is Polish under some distance $d_n$ and 
that $\EE\Bigl[ d_n(X_n, Y_n(m,x_m)\bigr)\Bigr] \to 0$ as $m \to -\infty$ for some 
sequence ${(x_m)}$ (possibly depending on $n$) such that $x_m \in A_m$.  
Then 
${(U_n)}_{n \leq 0}$ is a generating parameterization 
of the filtration $\FF$ generated by ${(X_n)}_{n \leq 0}$. In particular, $\FF$ is standard.
\end{lemma}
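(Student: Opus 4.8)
The plan is to verify directly that ${(U_n)}_{n\le0}$ is a \emph{generating} parameterization of $\FF$, and then to invoke Lemma~\ref{lemma:generatingparam} to conclude standardness. Since ${(U_n)}_{n\le0}$ is already a parameterization coming from the parametric representation ${(f_n,U_n)}_{n\le0}$, the only thing left to check is the generating condition $\FF\subset\UU$; equivalently, that $X_n$ is measurable with respect to $\UU_n=\sig(U_m;\,m\le n)$ for every $n\le0$, modulo negligible sets. Indeed, if this holds for every $n$, then $\FF_n=\sig(X_m;\,m\le n)\subset\UU_n$, because $\UU_m\subset\UU_n$ for $m\le n$.

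First I would record the elementary but crucial observation that, for a \emph{fixed} (deterministic) starting point $x_m\in A_m$, the coupled variable $Y_n(m,x_m)$ is a deterministic function of $(U_{m+1},\dots,U_n)$ only, as is clear by unfolding the inductive relation $Y_{k+1}(m,x_m)=f_{k+1}\bigl(Y_k(m,x_m),U_{k+1}\bigr)$. Consequently $Y_n(m,x_m)$ is $\UU_n$-measurable for every $m\le n$. This is precisely the point of the Propp-Wilson coupling: it produces a whole family of $\UU_n$-measurable approximations of $X_n$, indexed by the depth $m$ from which the chain is started.

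Next I would turn the $L^1$ hypothesis into a measurability statement. By assumption $\EE\bigl[d_n(X_n,Y_n(m,x_m))\bigr]\to0$ as $m\to-\infty$, so $Y_n(m,x_m)\to X_n$ in $L^1(d_n)$ and hence in probability. Extracting an almost surely convergent subsequence $m_k\to-\infty$, we get $Y_n(m_k,x_{m_k})\to X_n$ almost surely in the Polish space $(A_n,d_n)$. Since each $Y_n(m_k,x_{m_k})$ is $\UU_n$-measurable, and since an almost sure limit of Borel-measurable maps into a metric space is again Borel-measurable, it follows that $X_n$ is measurable with respect to the $\PP$-completion of $\UU_n$. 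As standardness is an invariant considered up to negligible sets, this is exactly the inclusion $\sig(X_n)\subset\UU_n$ that we need.

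Carrying this out for every $n\le0$ yields $\FF\subset\UU$, so ${(U_n)}_{n\le0}$ is a generating parameterization of $\FF$; Lemma~\ref{lemma:generatingparam} then gives that $\FF$ (as well as $\FF\vee\UU$) is standard. I expect the only delicate point to be the passage from the $L^1$ (equivalently, in-probability) convergence to genuine $\UU_n$-measurability of the limit $X_n$: one must use completeness of the $\sigma$-field under $\PP$, together with the separability of $A_n$, which is precisely why the Polish assumption on the state spaces is made.
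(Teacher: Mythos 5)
Your proof is correct and follows essentially the same route as the paper's: the paper's (one-line) argument likewise observes that $Y_n(m,x_m)$ is $\sigma(U_{m+1},\ldots,U_n)$-measurable, deduces that each $X_n$ is measurable with respect to $\sigma(\ldots,U_{n-1},U_n)$, and concludes via the generating-parameterization lemma. You have merely filled in the details the paper leaves as ``easy to check'' --- passing from $L^1$ to almost sure convergence along a subsequence and identifying $X_n$ as $\UU_n$-measurable modulo null sets --- and these details are handled correctly.
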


\begin{proof} 
The assumption implies that every $X_n$ is measurable with respect to 
$\sigma(\ldots, U_{n-1},U_n)$ because 
 $Y_n(m,x_m)$ is  $\sigma(U_{m+1}, \ldots, U_n)$-measurable. 
Then it is easy to check that ${(U_n)}_{n\le 0}$ is a generating parameterization of 
$\FF$.
\end{proof}

\subsection{Iterated Kantorovich pseudometrics and Vershik's criterion}\label{sec:vershik}

Vershik's standardness criterion will only be necessary to prove the second multidimensional 
version of Theorem~\ref{thm:monotonic} (Theorem~\ref{thm:fullymonotonicmulti}). 
However the iterated Kantorovich pseudometrics lying at the heart of 
Vershik's standardness will be used in the proof of Theorem~\ref{thm:monotonic}. 

A \emph{coupling} of two probability measures $\mu$ and  $\nu$ is a pair $(X_\mu,X_\nu)$ of two random variables defined on the same probability space with respective distribution $\mu$ and $\nu$. When $\mu$ and $\nu$ are defined on the same separable 
metric space $(E,\rho)$, the \emph{Kantorovich distance} between $\mu$ and $\nu$ is defined by
\begin{equation}\label{eq:def_Kantorovich}
  \rho'(\mu,\nu) := \inf \EE[\rho(X_\mu,X_\nu)],
\end{equation}
where the infimum is taken over all couplings $(X_\mu,X_\nu)$ of $\mu$ and  $\nu$. 

If $(E,\rho)$ is compact, the weak topology on the set of probability measures on $E$ is itself compact and metrized by the Kantorovich metric $\rho'$. 
If $\rho$ is only a pseudometric on $E$, one can define $\rho'$ in the same way, but we only get a pseudometric on the set of probability measures.

\bigskip

The iterated Kantorovich pseudometrics $\rho_n$ defined below arise from 
the translations of Vershik's ideas~\cite{Ver95} into the context of our Markov process 
${(X_n)}_{n \leq 0}$. 
Let $n_0 \leq 0$ be an integer and 
assume that we are given a compact pseudometric 
$\rho_{n_0}$ on the state space $A_{n_0}$ of $X_{n_0}$. 
Then for every $n \leq n_0$  
we recursively define a compact pseudometric $\rho_n$ on the state space $A_n$
of $X_n$ by setting  
$$ \rho_n(x_n, x'_n) \egdef
(\rho_{n+1})'\bigl(P_n(x_n,\cdot),P_n(x'_n,\cdot)\bigr) $$
where $(\rho_{n+1})'$ is the Kantorovich pseudometric derived 
from $\rho_{n+1}$  as explained above.

\begin{definition}\label{def:vershikianmarkov}
With the above notations, 
we say that the random variable $X_{n_0}$ satisfies the \emph{$V'$ property} 
if $\EE\left[\rho_n(X'_n,X''_n)\right] \to 0$  
where $X'_n$ and $X''_n$ are two 
independent copies of $X_n$.  
\end{definition} 

Note that the $V'$ property of $X_{n_0}$ is not only a property 
of the random variable $X_{n_0}$ alone, since its statement relies on 
the Markov process ${(X_n)}_{n \leq 0}$. 
Actually the $V'$ property of 
$X_{n_0}$ is a rephrasement of the \emph{Vershik property} (not stated 
in the present paper)   
of $X_{n_0}$ with respect to the filtration $\FF$ 
generated by ${(X_n)}_{n \leq 0}$,  in the present 
context when ${(X_n)}_{n \leq 0}$ is a Markov process. 
The equivalence between these two properties is shown in 
\cite{LauEntropy}, but in the present paper we do not introduce 
the general Vershik property. 
The definition also relies on the choice of the initial compact pseudometric $\rho_{n_0}$, 
but it is shown in \cite{LauTeoriya} and \cite{LauEntropy} that 
the Vershik property of $X_{n_0}$ (with respect to $\FF$) 
and actually is a property about the $\sigma$-field $\sigma(X_{n_0})$ 
generated by $X_{n_0}$ and thus it does not depend on $\rho_{n_0}$.  
Admitting this equivalence between the $V'$ property and the 
Vershik property, and using proposition 6.2 in \cite{LauTeoriya}, 
we get the following proposition.

\begin{ppsition}\label{ppsition:vershikmarkov}
The filtration generated by the Markov process ${(X_n)}_{n \leq 0}$ is 
standard if and only if $X_n$ satisfies the $V'$ property  for every 
$n \leq 0$. 
\end{ppsition}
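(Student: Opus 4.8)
The plan is to assemble the statement from two ingredients that the preceding discussion has already put at our disposal: the equivalence between the $V'$ property and the abstract Vershik property, and the general form of Vershik's standardness criterion. Since the heavy analytic work is contained in the cited results, the proof itself is essentially a chain of equivalences, and the only point requiring genuine care is the matching of the quantifiers.

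First I would recall Vershik's criterion in the form of Proposition~6.2 of~\cite{LauTeoriya}: because $\FF_0$ is essentially separable, the filtration $\FF$ is standard if and only if, for every $n \leq 0$, every $\FF_n$-measurable random variable satisfies the Vershik property with respect to $\FF$. Next I would use the fact, also recorded in~\cite{LauTeoriya} and mentioned just above the statement, that the Vershik property of a random variable is really a property of the $\sigma$-field it generates, and in particular is shared by any two random variables generating the same $\sigma$-field (and does not depend on the auxiliary choice of initial pseudometric $\rho_{n_0}$). Since in the Markov setting $X_n$ generates $\FF_n$, the condition ``every $\FF_n$-measurable random variable satisfies the Vershik property'' holds if and only if the single generator $X_n$ does. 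This collapses the quantification over all $\FF_n$-measurable variables down to the one test variable $X_n$.

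Finally I would invoke the equivalence established in~\cite{LauEntropy} between the Vershik property of $X_{n}$ and its $V'$ property, that is, the condition $\EE\left[\rho_n(X'_n,X''_n)\right] \to 0$ expressed through the iterated Kantorovich pseudometrics $\rho_n$ and two independent copies $X'_n, X''_n$ of $X_n$. Chaining the three equivalences yields, for the whole family indexed by $n \leq 0$: $\FF$ is standard $\iff$ every $\FF_n$-measurable variable has the Vershik property for all $n$ $\iff$ $X_n$ has the Vershik property for all $n$ $\iff$ $X_n$ has the $V'$ property for all $n$, which is exactly the claim.

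The real content sits inside the two cited results; in the self-contained argument there is no serious obstacle, and the step most deserving of attention is the reduction to a single generator in the middle equivalence. Vershik's criterion is naturally phrased in terms of \emph{all} $\FF_n$-measurable variables, whereas the $V'$ property is tested only on the generators $X_n$, so one must lean precisely on the $\sigma$-field dependence of the Vershik property to bridge the two. Because the paper explicitly permits us to admit the $V' \Leftrightarrow$ Vershik equivalence from~\cite{LauEntropy} and to use Proposition~6.2 of~\cite{LauTeoriya}, no further computation with the kernels $P_n$ or the pseudometrics is required.
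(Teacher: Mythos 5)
Your proof is correct and takes essentially the same route as the paper, which obtains the proposition in exactly this way: admitting the equivalence between the $V'$ property and the Vershik property from~\cite{LauEntropy}, using the fact (from~\cite{LauTeoriya} and~\cite{LauEntropy}) that the Vershik property depends only on the generated $\sigma$-field $\sigma(X_n)=\FF_n$ and not on the initial pseudometric, and invoking Proposition~6.2 of~\cite{LauTeoriya}. Your explicit handling of the quantifier reduction from all $\FF_n$-measurable variables to the single generator $X_n$ is the same step the paper performs implicitly when chaining these citations.
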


As shown in \cite{LauEntropy}, there is a considerable simplification 
 of 
Proposition \ref{ppsition:vershikmarkov} under the identifiability 
condition defined below. This is rephrased in Lemma~\ref{lemma:VershikMarkov}.

\begin{definition}\label{def:identifiable}
A Markov kernel $P$ is \emph{identifiable} when $x \mapsto P(x,\cdot)$ is one-to-one. 
A Markov process ${(X_n)}_{n \leq 0}$ is \emph{identifiable} if for every $n \leq 0$ 
its transition distributions $\LL(X_n \given X_{n-1}=x)$ are given 
by an identifiable Markov kernel $P_n$. 
\end{definition}

If $\rho_{n_0}$ is a metric and the Markov process is 
identifiable, then it is easy to prove by induction that $\rho_n$ 
is a metric for all $n\le n_0$, using the fact that $(\rho_{n+1})'$ is itself a metric.
Lemma~\ref{lemma:VershikMarkov} below, borrowed from \cite{LauEntropy}, 
provides a friendly 
statement of Vershik's standardness criterion 
for the filtration of an identifiable Markov process. 

\begin{lemma}\label{lemma:VershikMarkov}
Let ${(X_n)}_{n \leq 0}$ be an identifiable Markov process with 
$X_0$ taking its values in a compact metric space $(A_0,\rho_0)$.  
Then the filtration generated by ${(X_n)}_{n \leq 0}$ is standard if 
and only if $X_0$ satisfies the $V'$ property. 
\end{lemma}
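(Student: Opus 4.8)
The plan is to reduce everything to Proposition~\ref{ppsition:vershikmarkov}, which characterizes standardness of $\FF$ as the $V'$ property holding for \emph{every} $X_n$, $n\le0$. The content of the lemma is then that, under identifiability, the single $V'$ property of $X_0$ already forces the $V'$ property of all the $X_{n_0}$. The \emph{only if} direction is immediate: if $\FF$ is standard, Proposition~\ref{ppsition:vershikmarkov} gives the $V'$ property of every $X_n$, in particular of $X_0$ (computed from the given compact metric $\rho_0$ on $A_0$). All the work is in the converse.

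For the \emph{if} direction, I would run the iterated Kantorovich construction of Definition~\ref{def:vershikianmarkov} starting at level $n_0=0$ from $\rho_0$, obtaining pseudometrics $\rho_n$ on $A_n$ for all $n\le0$. Since $\rho_0$ is a metric and the process is identifiable, the induction already noted just before the statement (using that each $(\rho_{n+1})'$ is a metric and that $x\mapsto P_n(x,\cdot)$ is one-to-one) shows that every $\rho_n$ is in fact a compact \emph{metric}. By Definition~\ref{def:vershikianmarkov} the hypothesis that $X_0$ has the $V'$ property is exactly the single tail condition
\[
  \EE\bigl[\rho_n(X'_n,X''_n)\bigr]\longto 0\qquad\text{as }n\to-\infty ,
\]
where $X'_n,X''_n$ are independent copies of $X_n$.

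The key observation is that this \emph{same} family $(\rho_n)_{n\le0}$ also tests the $V'$ property of every $X_{n_0}$. Indeed, for fixed $n_0\le0$ the recursion defining the iterated metrics below level $n_0$ depends only on the datum at level $n_0$; taking $\rho_{n_0}$ as that datum reproduces verbatim the pseudometrics $\rho_n$ for all $n\le n_0$. Hence the $V'$ property of $X_{n_0}$ is again the condition $\EE[\rho_n(X'_n,X''_n)]\to0$ as $n\to-\infty$, i.e.\ the identical tail condition. So the $V'$ property of $X_0$ propagates unchanged to every $X_{n_0}$, and Proposition~\ref{ppsition:vershikmarkov} then yields standardness of $\FF$.

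The one point needing care — and where identifiability is essential — is the legitimacy of testing $X_{n_0}$ with the inherited $\rho_{n_0}$. The metric-independence of the $V'$ property recalled after Definition~\ref{def:vershikianmarkov} holds because $V'$ coincides with the Vershik property of $\sigma(X_{n_0})$; but this presupposes that the initial pseudometric faithfully generates $\sigma(X_{n_0})$, i.e.\ separates the points of $A_{n_0}$ (a pseudometric collapsing points would trivialize the tail condition). Identifiability supplies exactly this: by the induction above it makes each $\rho_{n_0}$ a genuine compact metric, hence an admissible initial datum for which metric-independence applies. I expect the only residual routine check to be that the Kantorovich recursion preserves compactness, which follows since $x\mapsto P_n(x,\cdot)$ embeds $A_n$ into the compact space of probability measures on $(A_{n+1},\rho_{n+1})$.
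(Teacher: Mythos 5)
Your argument is sound, but note that the paper itself offers no proof of this lemma: it is explicitly \emph{borrowed} from~\cite{LauEntropy}, and the text only frames it as ``a considerable simplification of Proposition~\ref{ppsition:vershikmarkov} under the identifiability condition.'' Your proposal supplies exactly the missing derivation along that natural route, and it works. The central observation --- that the iterated Kantorovich recursion is coherent, i.e.\ the family generated from the datum $\rho_{n_0}$ at level $n_0$ coincides for $n\le n_0$ with the restriction of the family generated from $\rho_0$ at level $0$, since the recursion depends only on the kernels and the pseudometric one level up --- is correct, and it does make the tail condition testing the $V'$ property of $X_{n_0}$ literally the same sequence of expectations $\EE\bigl[\rho_n(X'_n,X''_n)\bigr]$ as for $X_0$. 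You also locate correctly where identifiability is indispensable: the metric-independence of the $V'$ property, which the paper invokes from~\cite{LauTeoriya} and~\cite{LauEntropy}, is only available when the initial datum is a genuine compact metric adapted to $\sigma(X_{n_0})$, and identifiability is precisely what upgrades each iterated $\rho_{n_0}$ from a pseudometric to a metric (without it the tail condition would only capture a quotient of $X_{n_0}$, which is why Proposition~\ref{ppsition:vershikmarkov} needs the $V'$ property at every level in general). One point you leave implicit, though at the same level of rigor as the paper: a separating metric must also induce the correct Borel structure on $A_{n_0}$; this holds because the map sending $x$ to its iterated kernel image is an injective Borel map into the compact space of probability measures, hence a Borel isomorphism onto its image (Lusin--Souslin), so the $\rho_{n_0}$-Borel sets do generate $\sigma(X_{n_0})$. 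With that remark, your reduction to Proposition~\ref{ppsition:vershikmarkov} is complete in both directions, the ``only if'' part being immediate as you say.
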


%

\section{Monotonic Markov processes}\label{sec:monotonicmarkov}

Theorem~\ref{thm:monotonic} in Section~\ref{Section:standardness criterion} provides a simple standardness criterion 
for the filtration of a monotonic Markov process. 
After defining this kind of Markov processes, we introduce a series of tools before proving the theorem. 
An example is provided in this section (the Poissonian Markov chain), and 
examples of adic filtrations will be provided in Section~\ref{sec:adic}.

\subsection{Monotonic Markov processes and their representation}\label{sec:monotonicrepresentation}

\begin{definition}\label{def:ordered}
Let $\mu$ and $\nu$ be two probability measures  on the same ordered set, we say that the coupling $(X_\mu,X_\nu)$ of $\mu$ and $\nu$ is an \emph{ordered coupling} if 
$\Pr(X_\mu \le X_\nu)=1$ or $\Pr(X_\nu \le X_\mu)=1$. 
\end{definition} 


\begin{definition}
Let $\mu$ and $\nu$ be two probability measures on an ordered set. We say that $\mu$ is \emph{stochastically dominated} by $\nu$, and note $\mu\leqst\nu$, if there exists an ordered coupling $(X_\mu,X_\nu)$ such that $X_\mu\le X_\nu$ a.s.
\end{definition}

\begin{definition}\label{def:monotonic}
\begin{itemize}
\item When $A$ and $B$ are  ordered, 
a Markov kernel $P$ from  $A$ to  $B$ 
is \emph{increasing} if $x \leq x' \implies P(x,\cdot) \leqst P(x',\cdot)$. 

\item Let ${(X_n)}_{n \leq 0}$  be a Markov process such that each $X_n$ takes 
its values in an ordered set. 
We say that ${(X_n)}_{n \leq 0}$ is \emph{monotonic} if the Markov kernel 
$P_n(x,\cdot) \egdef \LL(X_{n} \given  X_{n-1}=x)$ is increasing for each $n$.
\end{itemize}
\end{definition} 

\begin{example}[{\bf Poissonian Markov chain}]\label{exple:poisson}
Given a decreasing sequence ${(\lambda_n)}_{n \leq 0}$ of positive real numbers, 
define the law of a Markov process ${(X_n)}_{n \leq 0}$ by:
\begin{itemize}
	\item \emph{(Instantaneous laws)} each $X_n$ has the Poisson distribution with mean $\lambda_n$;

	\item  \emph{(Markovian transition)} given $X_n=k$, the random variable $X_{n+1}$ has the 
binomial distribution on $\{0, \ldots, k\}$ with success probability parameter $\lambda_{n+1}/\lambda_n$. 
\end{itemize}
It is easy to check that the binomial distribution $\LL(X_{n+1} \given X_n=k)$ is stochastically 
increasing in $k$, hence  ${(X_n)}_{n \leq 0}$ is a monotonic Markov process. 
Note that it is identifiable (Definition~\ref{def:identifiable}). 
\end{example}

The notion of updating function for a Markov kernel has been introduced in 
Section~\ref{sec:PWcoupling}. 
Below we define the notion of increasing updating function, in 
the context of monotonic Markov kernels.

\begin{definition}
\begin{itemize}
\item Let $P$ be an (increasing) Markov kernel from  $A$ to  $B$ and 
$f$ be an updating function of $P$. We say that $f$ is 
an \emph{increasing updating function} if $f(x,u)\leq f(x',u)$ for almost all $u$ and for every 
$x,x' \in A$ satisfying $x \leq x'$.

\item We say that a parameterization ${(f_n, U_n)}_{n \leq 0}$ 
(defined in Section~\ref{sec:PWcoupling}) of a (monotonic) Markov process is an \emph{increasing 
representation} if every $f_n$ is  an increasing updating function, that is, 
the equality $f_n(x,U_n) \leq f_n(x',U_n)$ 
almost surely holds whenever $x \leq x'$. 
\end{itemize}
\end{definition} 

For a real-valued monotonic Markov process, it is easy to check that the quantile 
updating functions defined by~\eqref{eq:quantile} provide an increasing representation. 

%

\subsection{Standardness criterion for monotonic Markov processes}
\label{Section:standardness criterion}
The achievement of the present section is the following Theorem~which 
provides a practical criterion to check standardness of a filtration 
generated by a monotonic Markov process. 

\begin{thm}\label{thm:monotonic}
Let  ${(X_n)}_{n \leq 0}$  be an $\RR$-valued monotonic Markov process, 
and $\cal F$ the filtration it generates.
\begin{enumerate}[{1)}]
\item   
 The following conditions are equivalent. 
	\begin{enumerate}[{(a)}]
	\item $\FF$ is standard.
	\item $\FF$ admits a generating parameterization.
	\item Every increasing representation provides a generating parameterization. 
	\item For every $n \leq 0$, the conditional law ${\cal L}(X_{n} \given \FF_{-\infty})$ is almost surely equal to ${\cal L}(X_{n})$.
	\item $\FF$ is Kolmogorovian.
	\end{enumerate}

\item Assuming  that the Markov process is identifiable (Definition~\ref{def:identifiable}), then the five conditions above are 
equivalent to the almost-sure equality between the conditional law ${\cal L}(X_{0} \given \FF_{-\infty})$ and ${\cal L}(X_{0})$.
\end{enumerate}
\end{thm}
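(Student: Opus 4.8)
The plan is to prove the cycle of implications for the five conditions in part (1), and then handle the identifiability reduction in part (2) separately. The easy implications are $(b)\Rightarrow(a)$, which is immediate from Lemma~\ref{lemma:generatingparam}, and $(a)\Rightarrow(e)$, which is the necessary condition for standardness recalled after Definition~\ref{def:standard} (standard filtrations are Kolmogorovian). The genuinely monotonic part of the argument is the implication $(e)\Rightarrow(c)$: I want to show that under the Kolmogorovian assumption, \emph{any} increasing representation ${(f_n,U_n)}_{n\le0}$ is a generating parameterization. The remaining links $(c)\Rightarrow(b)$ and the equivalence of $(d)$ with the others will be threaded through this.

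First I would set up the Propp-Wilson coupling of Section~\ref{sec:PWcoupling} using a fixed increasing representation, so that for $n_0\le n\le 0$ and an initial point $x_{n_0}$ we have the coupled process $Y_n(n_0,x_{n_0})$ driven by the same innovations $U_n$. The key structural observation is monotonicity: because each $f_n$ is an increasing updating function, the map $x_{n_0}\mapsto Y_n(n_0,x_{n_0})$ is nondecreasing for each fixed realization of the $U$'s, and moreover two coupled processes started from $x_{n_0}\le x'_{n_0}$ remain ordered, $Y_n(n_0,x_{n_0})\le Y_n(n_0,x'_{n_0})$ almost surely for all $n\ge n_0$. In particular, sandwiching the true process $X_n$ between two Propp-Wilson copies started from different initial values lets me control $|X_n - Y_n(m,x_m)|$ by a monotone coupling quantity. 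The plan is then to verify the hypothesis of Lemma~\ref{lemma:PWgenerating}: I must show $\EE\bigl[d_n(X_n,Y_n(m,x_m))\bigr]\to0$ as $m\to-\infty$ for a suitable bounded metric $d_n$ (e.g.\ replace the Euclidean distance on $\RR$ by a bounded increasing transform, or work on a compactification, to make expectations finite and to invoke the compact-pseudometric machinery).

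The hard part will be precisely this convergence, and this is where condition $(e)$ (equivalently $(d)$) must enter. The idea is that the tail $\sigma$-field $\FF_{-\infty}$ records exactly the asymptotic discrepancy between the true trajectory and the Propp-Wilson copies: if the filtration is Kolmogorovian, the dependence on the starting value $x_m$ must wash out as $m\to-\infty$. Concretely, using monotonicity I would argue that the coupled copies $Y_n(m,x_m)$ and $Y_n(m,x'_m)$ from two different initial points, being driven by the same innovations, have a difference that is monotone in $m$ and whose expectation is controlled by the conditional law $\LL(X_n\given \FF_{-\infty})$; the almost-sure equality of this conditional law with the unconditional law $\LL(X_n)$ in $(d)$ forces the coupling gap to vanish. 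This is where I expect to need a careful interchange of limits and an appeal to the reverse martingale convergence theorem for the conditional laws $\LL(X_n\given \FF_m)$ as $m\to-\infty$. Establishing $(e)\Rightarrow(d)$ (upgrading degeneracy of the tail to the precise statement about conditional laws) is itself a step that should follow from the Markov property together with the fact that $\sigma(X_n)\subset\FF_0$, since under the Markov structure the tail-conditional law of $X_n$ is the relevant object.

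Finally, for part (2), under identifiability I want to reduce the five conditions to the single statement about $X_0$, namely $\LL(X_0\given\FF_{-\infty})=\LL(X_0)$ almost surely. Here identifiability means $x\mapsto P_n(x,\cdot)$ is injective (Definition~\ref{def:identifiable}), so that knowing the conditional law of $X_0$ given the tail pins down, via the transition kernels, the conditional laws of all the earlier $X_n$; this lets me propagate the $n=0$ statement backward to condition $(d)$ for every $n$. The mechanism is that if $\LL(X_0\given\FF_{-\infty})=\LL(X_0)$ then, integrating the kernel $P_0$ against the tail-conditional law of $X_{-1}$ and using injectivity of $x\mapsto P_0(x,\cdot)$ together with the monotone structure, one recovers $\LL(X_{-1}\given\FF_{-\infty})=\LL(X_{-1})$, and so on inductively. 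This identifiability reduction mirrors the way Lemma~\ref{lemma:VershikMarkov} simplifies Proposition~\ref{ppsition:vershikmarkov} by reducing a check over all $n$ to a single check at $n=0$, and I would expect the argument to be a clean induction once part (1) is in hand.
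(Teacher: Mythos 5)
Your reduction in part \emph{2)} rests on a mechanism that is false. You propose to propagate $\LL(X_0\given\FF_{-\infty})=\LL(X_0)$ backward in time by ``integrating the kernel $P_0$ against the tail-conditional law of $X_{-1}$ and using injectivity of $x\mapsto P_0(x,\cdot)$''. But identifiability (Definition~\ref{def:identifiable}) is injectivity of the kernel \emph{pointwise} in $x$; it does not make the induced map on mixtures $\mu\mapsto\int P_0(x,\cdot)\,d\mu(x)$ injective, and it is the latter that your induction step requires, since the Markov property only gives $\LL(X_0\given\FF_{-\infty})=\int P_0(x,\cdot)\,d\bigl[\LL(X_{-1}\given\FF_{-\infty})\bigr](x)$. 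The paper's own leading application destroys this route: the kernel $P(x,\cdot)=\text{Bernoulli}(x)$ is increasing and identifiable, yet its mixtures depend on $\mu$ only through the mean $\int x\,d\mu$ --- and this is precisely the Pascal transition $\PP(V_{-1}=1\given V_n=v_n)=v_n/|n|$ to which part \emph{2)} is applied in Section~\ref{sec:Pascal}. So monotonicity plus identifiability cannot recover $\LL(X_{-1}\given\FF_{-\infty})$ from $\LL(X_0\given\FF_{-\infty})$ by inverting mixtures. The paper's actual proof of \emph{2)} never propagates condition \textit{(d)}: it reruns the Propp--Wilson estimate at level $0$ and transports it to every level $n$ via the iterated Kantorovich pseudometrics, using Proposition~\ref{ppsition:KantoMarkov} to get $\rho_n\bigl(X_n,Y_n(m,x_m)\bigr)=\EE\bigl[\rho_0\bigl(X_0,Y_0(m,x_m)\bigr)\bigm\vert X_n,Y_n(m,x_m)\bigr]$, whence $\EE\bigl[\rho_n\bigl(X_n,Y_n(m,x_m)\bigr)\bigr]=\EE\bigl[\rho_0\bigl(X_0,Y_0(m,x_m)\bigr)\bigr]\to0$; identifiability enters only to guarantee that each $\rho_n$ is a genuine metric, so that Lemma~\ref{lemma:PWgenerating} applies with $d_n=\rho_n$.

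In part \emph{1)} your architecture coincides with the paper's ($\textit{(c)}\Rightarrow\textit{(b)}\Rightarrow\textit{(a)}\Rightarrow\textit{(e)}\Rightarrow\textit{(d)}\Rightarrow\textit{(c)}$; note that $\textit{(e)}\Rightarrow\textit{(d)}$ is immediate because conditioning on a degenerate $\sigma$-field returns the unconditional law --- no Markov property is needed, contrary to what you suggest). But the step you yourself flag as ``the hard part'' is exactly where the proof lives, and the two devices that close it are missing from your sketch. First, the choice of starting points: your sandwich between two Propp--Wilson copies would need extremal initial values in $A_m$, which need not exist; instead, since Lemma~\ref{lemma:degenerate} gives $s_m\egdef\EE\bigl[\rho'\bigl(\LL(X_n\given\FF_m),\LL(X_n)\bigr)\bigr]\to0$ and $\LL(X_n\given\FF_m)=\LL(X_n\given X_m)$, an averaging argument produces some $x_m$ with $\rho'\bigl(\LL(X_n\given X_m=x_m),\LL(X_n)\bigr)\le s_m$. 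Second, and crucially, Lemma~\ref{lemma:linear-Kanto}: convergence of conditional laws in $\rho'$ only controls the \emph{infimum} over couplings, so to control your particular coupling you need it to attain that infimum; because $|\cdot|$ is linear and the $f_n$ are increasing, $\bigl(X_n,Y_n(m,x_m)\bigr)$ is, conditionally on $\FF_m$, an ordered coupling, and ordered couplings achieve the Kantorovich distance, giving the exact identity $\EE\bigl[|X_n-Y_n(m,x_m)|\bigm\vert\FF_m\bigr]=\rho'\bigl(\LL(X_n\given\FF_m),\LL(X_n\given X_m=x_m)\bigr)$, hence $\EE\bigl[|X_n-Y_n(m,x_m)|\bigr]\le 2s_m\to0$ and Lemma~\ref{lemma:PWgenerating} concludes. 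Without this identity, ``monotonicity forces the coupling gap to vanish'' is an assertion, not an argument.
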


Before giving the proof of the theorem, we isolate the main tools that we will use. 

\subsubsection{Tool 1: Convergence of $\LL(X\given\FF_n)$}

Lemma \ref{lemma:degenerate}  is somehow a rephrasement of 
L\'evy's reversed martingale convergence theorem. 
It says in particular that condition (d) of Theorem~\ref{thm:monotonic} is the same 
as the convergence $\LL(X_n \given \FF_m) \tend{m}{-\infty} \LL(X_n)$. 
We state a preliminary lemma which will also be used in Section \ref{sec:monotonicmulti}. 

Given, on some probability space, a $\sigma$-field $\cal B$ and 
a random variable $X$ taking its values in a Polish space $A$, the conditional 
law $\LL(X \given {\cal B})$ is a random variable when the narrow topology 
is considered on the space of probability measures on $A$, and this topology 
coincides with the topology of weak convergence when $A$ is compact 
(see \cite{Crauel}). 

\begin{lemma}\label{lemma:convergence_conditional_law}
Let $A$ be a compact metric space and 
${(\Gamma_k)}_{k \geq 0}$ a sequence of random variables taking values in the space of probability 
measures on $A$ equipped   
with the topology of weak convergence. 
Then the sequence
 ${(\Gamma_k)}_{k \geq 0}$ almost surely converges to a random probability 
measure $\Gamma_\infty$ if and only if,  for every continuous function $f \colon A \to \RR$, $\Gamma_k(f)$ almost surely converges to 
$\Gamma_\infty(f)$.  
\end{lemma}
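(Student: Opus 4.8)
The plan is to prove the equivalence via the standard characterization of almost-sure convergence in a separable metric space in terms of a countable dense family of test functions. Since $A$ is compact, the space $\P(A)$ of probability measures on $A$ equipped with the weak topology is itself compact and metrizable; fix a metric $D$ inducing this topology. The forward direction is immediate: if $\Gamma_k \to \Gamma_\infty$ almost surely in $(\P(A),D)$, then by the very definition of weak convergence, for each fixed continuous $f$ the map $\mu \mapsto \mu(f)$ is continuous, so $\Gamma_k(f) \to \Gamma_\infty(f)$ almost surely on the same full-measure event.

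The substance lies in the converse. First I would exploit compactness and separability: the space $C(A)$ of continuous real functions on the compact metric space $A$ is separable in the uniform norm, so I can fix a countable dense set ${(f_j)}_{j \geq 1} \subset C(A)$. The hypothesis gives, for each $j$, a full-measure event on which $\Gamma_k(f_j) \to \Gamma_\infty(f_j)$; intersecting over the countable index $j$ yields a single full-measure event $\Omega_0$ on which $\Gamma_k(f_j) \to \Gamma_\infty(f_j)$ simultaneously for \emph{all} $j$. The goal is then to upgrade, pointwise on $\Omega_0$, this convergence-against-a-countable-family to genuine weak convergence $\Gamma_k \to \Gamma_\infty$.

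The key step is a uniform-equicontinuity / approximation argument at a fixed sample point $\omega \in \Omega_0$. Given an arbitrary $f \in C(A)$ and $\eps > 0$, I would choose $f_j$ with $\|f - f_j\|_\infty < \eps$; then for every probability measure $\mu$ one has $|\mu(f) - \mu(f_j)| \leq \|f - f_j\|_\infty < \eps$, a bound uniform in $\mu$ because each $\mu$ is a probability measure. Writing
$$ |\Gamma_k(f) - \Gamma_\infty(f)| \leq |\Gamma_k(f) - \Gamma_k(f_j)| + |\Gamma_k(f_j) - \Gamma_\infty(f_j)| + |\Gamma_\infty(f_j) - \Gamma_\infty(f)|, $$
the first and third terms are each below $\eps$ uniformly, and the middle term tends to $0$ as $k \to \infty$ by the choice of $\Omega_0$. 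Hence $\limsup_k |\Gamma_k(f) - \Gamma_\infty(f)| \leq 2\eps$, and letting $\eps \to 0$ gives $\Gamma_k(f) \to \Gamma_\infty(f)$ for every $f \in C(A)$, on $\Omega_0$.

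Finally I would convert convergence-against-every-$f\in C(A)$ into convergence in $(\P(A), D)$, which is exactly the definition of the weak topology: since $\Gamma_k(\omega)(f) \to \Gamma_\infty(\omega)(f)$ for all $f \in C(A)$ and all $\omega \in \Omega_0$, the measures $\Gamma_k(\omega)$ converge weakly to $\Gamma_\infty(\omega)$, i.e. $D(\Gamma_k(\omega), \Gamma_\infty(\omega)) \to 0$, for each $\omega \in \Omega_0$. As $\PP(\Omega_0) = 1$, this is precisely almost-sure convergence of $\Gamma_k$ to $\Gamma_\infty$. The only mild obstacle is bookkeeping the measurability of $\Gamma_\infty$ and the fact that the approximation constant is genuinely uniform over $\P(A)$; both are handled by the total mass being $1$ and by the separability of $C(A)$, so I expect no serious difficulty beyond assembling these ingredients carefully.
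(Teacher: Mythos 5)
Your proof is correct and follows essentially the same route as the paper's: the paper likewise disposes of the forward direction as obvious and, for the converse, invokes the separability of $C(A)$ to obtain a single full-measure event, merely citing the references for the uniform-approximation details that you spell out with the $3\eps$ triangle-inequality argument. No gaps; your write-up just makes explicit what the paper delegates to \cite{Crauel} and \cite{BPR}.
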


\begin{proof}
The "only if" part is obvious. Conversely, if  for each continuous function $f \colon A \to \RR$, $\Gamma_k(f)$ almost surely converges to 
$\Gamma_\infty(f)$, then the full set of convergence can be taken independently of $f$ 
 by using the separability of the space of continuous functions on $A$. This shows the almost sure weak convergence $\Gamma_k \to \Gamma_\infty$ 
(see~\cite{Crauel} or~\cite{BPR} for details).  
\end{proof}

Recall that $\rho'$ denotes the  Kantorovich metric (defined by~\eqref{eq:def_Kantorovich}) 
 induced by $\rho$. 

\begin{lemma}\label{lemma:degenerate}
Let $\FF$ be a filtration and $X$ an 
$\FF_0$-measurable random variable taking its values in a compact metric space 
$(A,\rho)$. 
Then one always has the almost sure convergence as well as the 
$L^1$-convergence $\LL(X\given\FF_n) \to \LL(X\given\FF_{-\infty})$, 
i.e.
$$ \rho' \bigl(\LL(X\given\FF_n),\LL(X\given\FF_{-\infty})\bigr) \tend{n}{-\infty}0
\quad \text{almost surely}$$
and
$$\EE\Bigl[ \rho' \bigl(\LL(X\given\FF_n),\LL(X\given\FF_{-\infty})\bigr) \Bigr] \tend{n}{-\infty}0.$$
\end{lemma}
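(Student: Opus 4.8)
The plan is to prove this as a consequence of Lévy's reversed martingale convergence theorem, transported from the level of expectations of individual test functions to the level of the full random conditional law, using Lemma~\ref{lemma:convergence_conditional_law} as the bridge. The essential point is that weak convergence of probability measures on a compact metric space is detected by a countable family of bounded continuous test functions, so the convergence of the random measures $\LL(X\given\FF_n)$ can be reduced to the convergence of the real-valued reversed martingales $\EE[f(X)\given\FF_n]$ for each $f$ in such a family.

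First I would fix the setup: since $A$ is compact metric, the space of continuous functions $C(A)$ is separable, so I may choose a countable dense subset $\{f_k\}_{k\ge1}$ of $C(A)$. For each fixed $k$, the process ${\bigl(\EE[f_k(X)\given\FF_n]\bigr)}_{n\le0}$ is a reversed (backward) martingale with respect to the decreasing filtration ${(\FF_n)}_{n\le0}$, and $f_k(X)$ is bounded and $\FF_0$-measurable. By Lévy's reversed martingale convergence theorem, this martingale converges both almost surely and in $L^1$ to $\EE[f_k(X)\given\FF_{-\infty}]$, where $\FF_{-\infty}=\bigcap_n\FF_n$. Intersecting the countably many almost-sure convergence events over $k$, I obtain a single full-measure event on which $\EE[f_k(X)\given\FF_n]\to\EE[f_k(X)\given\FF_{-\infty}]$ simultaneously for all $k$. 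Since $\Gamma_n:=\LL(X\given\FF_n)$ satisfies $\Gamma_n(f_k)=\EE[f_k(X)\given\FF_n]$ a.s., density of $\{f_k\}$ in $C(A)$ upgrades this to $\Gamma_n(f)\to\Gamma_\infty(f)$ for every $f\in C(A)$ (approximate $f$ uniformly by $f_k$'s and use $\|\Gamma_n\|=1$). Then Lemma~\ref{lemma:convergence_conditional_law} yields the almost-sure weak convergence $\Gamma_n\to\Gamma_\infty=\LL(X\given\FF_{-\infty})$, which on the compact space $A$ is exactly convergence in the Kantorovich metric $\rho'$; this gives the first displayed assertion.

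For the $L^1$-convergence I would exploit compactness once more: the diameter of $A$ under $\rho$ is finite, say bounded by $D$, so $\rho'\bigl(\LL(X\given\FF_n),\LL(X\given\FF_{-\infty})\bigr)\le D$ deterministically. The almost-sure convergence to $0$ established above, combined with this uniform bound, lets me invoke the dominated convergence theorem to conclude
$$\EE\Bigl[\rho'\bigl(\LL(X\given\FF_n),\LL(X\given\FF_{-\infty})\bigr)\Bigr]\tend{n}{-\infty}0.$$

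I expect the main obstacle to be purely technical bookkeeping rather than conceptual: one must be careful that the conditional laws $\Gamma_n$ are well-defined random measures (which is granted by the compactness of $A$ and the remarks preceding the lemma), and that the identity $\Gamma_n(f)=\EE[f(X)\given\FF_n]$ holds off a null set depending on $f$ — this is precisely why restricting to a countable dense family $\{f_k\}$ and then passing to the uniform limit is necessary, so that a single null set suffices. The rest is the standard reversed-martingale argument together with the clean characterization of weak convergence supplied by Lemma~\ref{lemma:convergence_conditional_law}.
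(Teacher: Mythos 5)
Your proposal is correct and follows essentially the same route as the paper's proof: L\'evy's reversed martingale convergence theorem for each continuous test function, Lemma~\ref{lemma:convergence_conditional_law} to pass to almost-sure weak convergence of the conditional laws (your countable-dense-family bookkeeping is exactly what that lemma's proof encapsulates), the fact that the Kantorovich metric metrizes weak convergence on a compact space, and dominated convergence via the bounded diameter for the $L^1$ statement. No gaps.
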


\begin{proof}
By L\'evy's reversed martingale convergence theorem, the convergence 
$\EE\bigl[f(X)\given\FF_n\bigr] \to \EE\bigl[f(X)\given\FF_{-\infty}\bigr]$  
holds almost surely for every 
continuous functions $f\colon A \to \RR$. 
The almost sure weak convergence $\LL(X\given\FF_n) \to \LL(X\given\FF_{-\infty})$ 
follows from Lemma \ref{lemma:convergence_conditional_law}. 
Since the Kantorovich distance metrizes the weak convergence, we get 
the almost sure convergence of 
$\rho' \bigl(\LL(X\given\FF_n),\LL(X\given\FF_{-\infty})\bigr)$ to $0$, as well as  
the $L^1$-convergence by the dominated convergence theorem.
\end{proof}

\begin{example*}[{\bf Poissonian Markov chain}]
Consider Example~\ref{exple:poisson}. 
We are going to determine the conditional law $\LL(X_0\given\FF_{-\infty})$. 
For every $n \leq -1$, the conditional law $\LL(X_0\given\FF_{n})$ is the 
binomial distribution on $\{0, \ldots, X_n\}$ with success probability parameter 
$\theta_n:=\lambda_0/\lambda_n$. 
Since ${(X_n)}_{n \leq 0}$ is decreasing, $X_n$ almost surely goes to a 
random variable $X_{-\infty}$ takings its values in $\N \cup \{+\infty\}$. 
\begin{itemize}
	\item \emph{Case 1: $\lambda_n \to \lambda_{-\infty} < \infty$.} 
In this case, it is easy to see with the help of Fourier transforms that 
$X_{-\infty}$ has the Poisson distribution with mean 
$\lambda_{-\infty}$. And by Lemma~\ref{lemma:degenerate}, 
$\LL(X_0\given\FF_{-\infty})$ is the 
binomial distribution on $\{0, \ldots, X_{-\infty}\}$ with success probability parameter 
$\lambda_0/\lambda_{-\infty}$. 
	
	\item \emph{Case 2: $\lambda_n \to +\infty$.} 
In this case, $X_n$ almost surely goes to $+\infty$. 
Indeed, $\PP(X_{-\infty}>K)\ge \PP(X_n > K) \to 1$ for any $K>0$. 
By the well-known Poisson approximation to the binomial distribution, 
it is expected that $\LL(X_0\given\FF_{n})$ should be well approximated by 
the Poisson distribution with mean $X_n\theta_n$ and then that $\LL(X_0\given\FF_{-\infty})$ 
should be the deterministic Poisson distribution with mean $\lambda_0$ 
(that is, the law of $X_0$). 
We prove it using Lemma~\ref{lemma:degenerate}. Let $\LL_n:=\LL(X_0\given\FF_n)$, 
denote by $\P(\lambda)$ the Poisson distribution with mean $\lambda$ 
and by $\Bin(k,\theta)$ the binomial distribution on $\{0, \ldots, k\}$ with 
success probability parameter $\theta$.  
Let $\rho$ be the discrete distance on the state space $\N$ of $X_0$. 
By introducing an appropriate coupling of $\P(\lambda)$ and $\Bin(k,\theta)$, as described in the introduction of~\cite{Lindvall}, it is not difficult to prove that 
$$\rho'\Bigl(\Bin(k,\theta), \P(k\theta)\Bigr) \leq k\theta^2.$$ 
By applying this result, 
$$\rho'\Bigl(\LL_n, \P(X_n\theta_n)\Bigr) \leq X_n\theta_n^2 =\frac{X_n}{\lambda_n}\frac{\lambda_0^2}{\lambda_n}.$$ 
Hence 
\begin{equation}
\label{Eq:cvgce1}
  \EE\left[\rho'\bigl(\LL_n, \P(X_n\theta_n)\bigr)\right] \to 0.
\end{equation}
On the other hand, for every $\lambda\geq\lambda'>0$, using the fact that  $\P(\lambda)=\P(\lambda')\ast\P(\lambda-\lambda')$, it is easy to derive the 
inequality 
$$\rho'\Bigl(\P(\lambda), \P(\lambda')\Bigr) \leq 1 - \exp(\lambda'-\lambda)\le |\lambda - \lambda'|.$$
Thus
$$\rho'\Bigl(\P(X_n\theta_n), \P(\lambda_0)\Bigr) \leq |X_n\theta_n-\lambda_0|.$$
Since $\var(X_n\theta_n)=\theta_n^2\lambda_n=\lambda_0^2/\lambda_n\to 0$, we get by Tchebychev's inequality, 
$X_n\theta_n \to \lambda_0$ in probability, which implies that 
$$
\EE\left[\rho'\Bigl(\P(X_n\theta_n), \P(\lambda_0)\Bigr) \right] \to 0.
$$
Together with~\eqref{Eq:cvgce1}, this yields 
$$\EE\left[\rho'\bigl(\LL_n, \P(\lambda_0)\bigr)\right] \to 0.$$
Comparing with Lemma~\ref{lemma:degenerate}, we get, as expected,  
$\LL(X_0\given\FF_{-\infty})=\P(\lambda_0)$.
\end{itemize}
The second assertion of Theorem~\ref{thm:monotonic} shows that the Poissonian Markov chain  
generates a standard filtration when $\lambda_n \to +\infty$, and a non-Kolmogorovian filtration otherwise.
\end{example*}

\subsubsection{Tool 2: Ordered couplings and linear metrics}

\begin{lemma}\label{lemma:transitivity}
 Let $\mu$, $\nu$ and $\eta$ be probability measures defined on an ordered set $E$ such that $\mu\leqst\nu$ and $\nu\leqst\eta$. 
 Then we can find three random variables  $X_\mu$, $X_\nu$, $X_\eta$ on the same probability space, with respective distribution $\mu$, $\nu$ and $\eta$, such that $X_\mu\le X_\nu\le X_\eta$ a.s. In particular, $\mu\leqst\eta$.
\end{lemma}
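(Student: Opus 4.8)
The goal is to merge two ordered couplings—one realizing $\mu \leqst \nu$, the other realizing $\nu \leqst \eta$—into a single coupling of all three measures that preserves both orderings simultaneously. The natural tool is a gluing argument: the two given couplings share the marginal $\nu$, so I would glue them along that common marginal using a regular conditional distribution (a Markov kernel), then check that the resulting triple is ordered a.s.

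First, by hypothesis there exist pairs $(X_\mu, X_\nu)$ with $X_\mu \le X_\nu$ a.s.\ and $(\wt{X_\nu}, X_\eta)$ with $\wt{X_\nu} \le X_\eta$ a.s., where $X_\nu$ and $\wt{X_\nu}$ both have law $\nu$. The plan is to disintegrate each coupling over its $\nu$-coordinate: write the law of $X_\mu$ given $X_\nu = y$ as a kernel $K(y, \cdot)$, and the law of $X_\eta$ given $\wt{X_\nu} = y$ as a kernel $L(y, \cdot)$. I would then build a single probability space carrying a variable $Y$ of law $\nu$ together with conditionally independent draws $X_\mu \sim K(Y, \cdot)$ and $X_\eta \sim L(Y, \cdot)$, setting $X_\nu := Y$. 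By construction the marginals are $\mu$, $\nu$, $\eta$ respectively. For this disintegration step to be available I would invoke that $E$ is a suitably nice ordered space (e.g.\ the real line or a Polish ordered space, which is the ambient setting of the paper), so that regular conditional distributions exist.

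Next I would verify the a.s.\ orderings on the glued space. Since conditioning preserves a.s.\ inequalities, the kernel $K(y,\cdot)$ is supported on $\{x \le y\}$ for $\nu$-almost every $y$ (because $X_\mu \le X_\nu$ a.s.), and likewise $L(y,\cdot)$ is supported on $\{z \ge y\}$ for $\nu$-almost every $y$. Hence on the new space $X_\mu \le Y = X_\nu$ a.s.\ and $X_\nu = Y \le X_\eta$ a.s., giving $X_\mu \le X_\nu \le X_\eta$ almost surely. The conclusion $\mu \leqst \eta$ is then immediate from the pair $(X_\mu, X_\eta)$ with $X_\mu \le X_\eta$ a.s.

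The main obstacle is the measure-theoretic gluing itself: ensuring that the conditional supports genuinely sit in $\{x \le y\}$ and $\{z \ge y\}$ for $\nu$-a.e.\ $y$, rather than merely in the sets carrying full mass of the joint. One clean way to sidestep delicate support arguments is to work directly with random variables rather than kernels: realize everything on a product space and use the fact that a.s.\ inequalities pass through the gluing. An even simpler route, available because $E = \RR$ in the applications (and more generally whenever the order embeds into $\RR$), is to use monotone (quantile) couplings: take a single uniform $U$ on $[0,1]$ and set $X_\mu = F_\mu^{-1}(U)$, $X_\nu = F_\nu^{-1}(U)$, $X_\eta = F_\eta^{-1}(U)$, where $F_\mu^{-1}$ etc.\ are the right-continuous quantile functions. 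Stochastic domination is equivalent to the pointwise inequality $F_\mu^{-1} \le F_\nu^{-1} \le F_\eta^{-1}$ of quantile functions, so the three variables are automatically ordered for every $U$, and transitivity follows at once. I would favor this quantile construction in the write-up for the real-valued case, reserving the kernel-gluing argument for the general ordered-set statement.
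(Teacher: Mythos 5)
Your main construction is exactly the paper's proof: gluing the two ordered couplings along the common marginal $\nu$ via regular conditional distributions with conditional independence given the middle coordinate is precisely the ``relatively independent coupling of $\PP_{\mu,\nu}$ and $\PP_{\nu,\eta}$ over $E_2$'' written out there, and your verification of the a.s.\ orderings (the joint law of each adjacent pair coincides with the given ordered coupling) matches the paper's. Your quantile-coupling alternative for $E\subset\RR$ is a correct and even more elementary shortcut in the real-valued case, but it is supplementary; the substance of your argument coincides with the paper's.
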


\begin{proof}
Let us consider three copies $E_1, E_2, E_3$ of $E$. 
Since $\mu\leqst\nu$, we can find a probability measure $\PP_{\mu, \nu}$ on $E_1\times E_2$ which is a coupling of $\mu$ and $\nu$, such that 
$\PP_{\mu, \nu}\left( \{(x_1,x_2): x_1\le x_2\}\right)=1$.
In the same way, we can find a probability measure $\PP_{\nu, \eta}$ on $E_2\times E_3$ which is a coupling of $\nu$ and $\eta$, such that 
$\PP_{\nu, \eta}\left( \{(x_2,x_3): x_2\le x_3\}\right)=1$. 
We consider the relatively independent coupling of $\PP_{\mu, \nu}$ and $\PP_{\nu, \eta}$ over $E_2$, which is the probability measure on $E_1\times E_2\times E_3$, defined by 
$$
\PP(A\times B\times C):= \int_B d\nu(x)\ \PP_{\mu, \nu}(A\times E_2|x_2=x)\ \PP_{\nu, \eta}(E_2\times C|x_2=x).
$$
Under $\PP$, the pair $(x_1,x_2)$ follows $\PP_{\mu, \nu}$ and the pair $(x_2,x_3)$ follows $\PP_{\nu, \eta}$. In particular, $x_1, x_2$ and $x_3$ are respectively distributed according to $\mu, \nu$ and $\eta$, and we have 
$$\PP\left( \{(x_1,x_2,x_3): x_1\le x_2\le x_3\}\right)=1.$$
\end{proof}

\begin{definition}
A pseudometric on an ordered set is \emph{linear} if 
$\rho(a,c) = \rho(a,b) + \rho(b,c)$ for every $a \leq b \leq c$. 
\end{definition} 

\begin{lemma}
  \label{lemma:linear-Kanto}
  Let $\rho$ be a linear pseudometric on a totally ordered set $A$, and let $\rho'$ be the associated Kantorovich pseudometric on the set of probability measures on $A$.
  Let $(Y_\mu,Y_\nu)$ be an ordered coupling of two probability measures $\mu$ and $\nu$ on $A$.
  Then
  $$\EE[\rho(Y_\mu,Y_\nu)]=\rho'(\mu,\nu). $$
In other words, the Kantorovich distance is achieved by any ordered coupling. 

Moreover, the Kantorovich pseudometric $\rho'$ is linear for the stochastic order: if $\mu\leqst\nu\leqst\eta$, one has 
 \begin{equation}
   \label{eq:linearity-Kanto}
 \rho'(\mu,\eta) =  \rho'(\mu,\nu) +  \rho'(\nu,\eta).
 \end{equation} 
\end{lemma}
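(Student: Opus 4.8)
The plan is to observe that a linear pseudometric on a totally ordered set is, up to a monotone reparameterization, the pullback of the usual metric on $\RR$; this collapses the Kantorovich pseudometric to a one-dimensional problem in which the ordered (monotone) coupling is optimal. Concretely, I would fix a base point $a_0 \in A$ and define $\phi \colon A \to \RR$ by $\phi(x) = \rho(a_0,x)$ when $x \geq a_0$ and $\phi(x) = -\rho(a_0,x)$ when $x \leq a_0$ (the two definitions agreeing at $a_0$). A short case check on the three possible positions of a pair $x \leq y$ relative to $a_0$ — both above, both below, or straddling $a_0$ — together with the linearity identity $\rho(a,c)=\rho(a,b)+\rho(b,c)$ and symmetry of $\rho$, gives $\rho(x,y) = \phi(y) - \phi(x)$ whenever $x \leq y$. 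Since $A$ is totally ordered this means $\rho(x,y) = |\phi(x) - \phi(y)|$ for all $x,y$, and in particular $\phi$ is nondecreasing.

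With this representation the first assertion is immediate. By symmetry of $\rho$ and $\rho'$ there is no loss in assuming the ordered coupling satisfies $Y_\mu \leq Y_\nu$ a.s. For an \emph{arbitrary} coupling $(X_\mu,X_\nu)$ of $\mu$ and $\nu$, writing $\rho(X_\mu,X_\nu) = |\phi(X_\mu) - \phi(X_\nu)|$ and applying $\EE|Z| \geq |\EE Z|$ gives
$$ \EE[\rho(X_\mu,X_\nu)] \geq \Bigl| \textstyle\int \phi \, d\mu - \int \phi \, d\nu \Bigr|, $$
so $\rho'(\mu,\nu)$ is bounded below by a quantity depending only on the marginals. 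For the ordered coupling, however, $\phi(Y_\mu) \leq \phi(Y_\nu)$ a.s., whence
$$ \EE[\rho(Y_\mu,Y_\nu)] = \EE[\phi(Y_\nu)] - \EE[\phi(Y_\mu)] = \int \phi \, d\nu - \int \phi \, d\mu, $$
which equals that lower bound. As $(Y_\mu,Y_\nu)$ is itself an admissible coupling, this common value is exactly $\rho'(\mu,\nu)$, proving that every ordered coupling achieves the Kantorovich distance.

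For the linearity of $\rho'$ I would invoke Lemma~\ref{lemma:transitivity}: from $\mu\leqst\nu\leqst\eta$ it produces random variables $X_\mu \leq X_\nu \leq X_\eta$ on one probability space with the prescribed marginals. Each of the pairs $(X_\mu,X_\nu)$, $(X_\nu,X_\eta)$ and $(X_\mu,X_\eta)$ is then an ordered coupling, so by the first part their expected $\rho$-distances equal $\rho'(\mu,\nu)$, $\rho'(\nu,\eta)$ and $\rho'(\mu,\eta)$ respectively. Applying the linearity of $\rho$ pointwise, $\rho(X_\mu,X_\eta) = \rho(X_\mu,X_\nu) + \rho(X_\nu,X_\eta)$ a.s., and taking expectations yields~\eqref{eq:linearity-Kanto}.

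The conceptual crux is the monotone linearization $\rho = |\phi(\cdot)-\phi(\cdot)|$; once it is in hand both statements reduce to elementary facts. The only genuinely delicate point is measurability and integrability of $\phi$: one must check that $\phi$ is Borel — which holds because the order intervals $\{x \geq a_0\}$, $\{x \leq a_0\}$ are measurable and $\rho(a_0,\cdot)$ is measurable — and that $\int \phi \, d\mu$, $\int \phi \, d\nu$ are finite, which is automatic in the setting of the paper where $\rho$ is a compact, hence bounded, pseudometric. Everything else is a routine case analysis and an application of the inequality $\EE|Z|\geq|\EE Z|$.
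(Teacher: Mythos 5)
Your proposal is correct and follows essentially the same route as the paper: reduce via a non-decreasing map $\varphi$ to the case $A\subset\RR$ with $\rho(x,y)=|x-y|$, obtain the lower bound for arbitrary couplings from $\EE|Z|\ge|\EE Z|$, show any ordered coupling attains it, and derive~\eqref{eq:linearity-Kanto} from the three-way ordered coupling of Lemma~\ref{lemma:transitivity}. The only difference is that you explicitly construct $\varphi$ (via the base point $a_0$ and a case check) and address its measurability and integrability, details the paper simply asserts with ``we can find a non-decreasing map''---a welcome but inessential refinement.
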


\begin{proof}
  Since $\rho$ is linear and the set is totally ordered, we can find a non-decreasing map $\varphi:A\to\RR$ such that for all $x,y\in A$, $\rho(x,y)=|\varphi(x)-\varphi(y)|$. Hence we can assume without loss of generality that $A\subset\RR$ and $\rho(x,y)=|x-y|$.
  Since $(Y_\mu,Y_\nu)$ is an ordered coupling, we can also assume that $Y_\mu\ge Y_\nu$ a.s. Thus,
  $$ \EE[\rho(Y_\mu,Y_\nu)] = \EE [Y_\mu]-\EE[Y_\nu]\ge 0. $$
  Now, consider any coupling $(X_\mu,X_\nu)$ of $\mu$ and $\nu$. Then
  $$ \EE[\rho(X_\mu,X_\nu)] = \EE [|X_\mu-X_\nu|] \ge  \Bigl|\EE [X_\mu-X_\nu]\Bigr|
  = \Bigl|\EE [X_\mu]-\EE [X_\nu]\Bigr| = \EE[\rho(Y_\mu,Y_\nu)], $$
  which proves the first assertion of the lemma.
  
  Now, assuming that $\mu\leqst\nu\leqst\eta$, we consider an ordered coupling $(Y_\mu,Y_\nu,Y_\eta)$ where $Y_\mu\le Y_\nu\le Y_\eta$ a.s (see Lemma~\ref{lemma:transitivity}). Then,  
  $$ \rho'(\mu,\eta) = \EE[\rho(Y_\mu,Y_\eta)] = \EE[\rho(Y_\mu,Y_\nu)] + \EE[\rho(Y_\nu,Y_\eta)] =  \rho'(\mu,\nu) +  \rho'(\nu,\eta),$$
and the proof is over.
\end{proof}

In the next proposition, ${(X_n)}_{n \leq 0}$ is a monotonic Markov process 
with a given increasing representation $(f_n, U_n)$ (see Section~\ref{sec:monotonicrepresentation}), and we assume that all the state spaces $A_n$ are totally ordered.
Given a distance $\rho_0$ on $A_0$, we iteratively define the pseudometrics $\rho_n$ on $A_n$ as in Section~\ref{sec:vershik}. 
As explained in Section~\ref{sec:PWcoupling}, for any $m\le 0$, for any $x_m\in A_m$, we denote by $(Y_n(m, x_m))_{m\le n\le 0}$ the Propp-Wilson coupling starting at $x_m$.

This proposition is the main point in the demonstration 
of Theorem~\ref{thm:monotonic}. 
It will also be used later to derive the intrinsic metrics on the 
Pascal and Euler graphs.

\begin{ppsition}\label{ppsition:KantoMarkov}
Assume that $\rho_0$ is a linear distance on $A_0$.  Then 
for all $n\le 0$, $\rho_n$ is a linear pseudometric on $A_n$. Moreover, for all $(y,z)$ in $A_n$, $\rho_n(y,z)$ is the Kantorovich distance between 
$\LL(X_0 \given X_n=y)$ and $\LL(X_0 \given X_n=z)$ 
induced by $\rho_0$ and 
 $$ \forall y,z,\quad \rho_n(y,z)= \EE\Bigl[ \rho_0\left( Y_0(n,y),Y_0(n,z)\right)\Bigr].$$ 
\end{ppsition}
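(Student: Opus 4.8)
The plan is to establish the three assertions simultaneously by a single downward induction on $n$, starting from $n=0$ and decreasing, using the recursive definition from Section~\ref{sec:vershik}, which here reads $\rho_n(y,z)=(\rho_{n+1})'\bigl(\LL(X_{n+1}\given X_n=y),\LL(X_{n+1}\given X_n=z)\bigr)$, together with Lemma~\ref{lemma:linear-Kanto} and the composition structure of the Propp--Wilson flow. The induction hypothesis at level $n+1$ will be that $\rho_{n+1}$ is a linear pseudometric and that $\rho_{n+1}(a,b)=\EE\bigl[\rho_0(Y_0(n+1,a),Y_0(n+1,b))\bigr]$ for all $a,b\in A_{n+1}$. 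The base case $n=0$ is immediate, since $\rho_0$ is linear by hypothesis and $Y_0(0,a)=a$.

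For the linearity step, I first observe that monotonicity of ${(X_n)}_{n\le0}$ gives, for $y\le y'\le y''$ in $A_n$, the stochastic dominations $\LL(X_{n+1}\given X_n=y)\leqst\LL(X_{n+1}\given X_n=y')\leqst\LL(X_{n+1}\given X_n=y'')$. Since $\rho_{n+1}$ is a linear pseudometric by the induction hypothesis, the linearity of the derived Kantorovich pseudometric for the stochastic order (equation~\eqref{eq:linearity-Kanto} of Lemma~\ref{lemma:linear-Kanto}) yields $\rho_n(y,y'')=\rho_n(y,y')+\rho_n(y',y'')$, so $\rho_n$ is linear.

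For the flow formula, fix $y,z\in A_n$, which we may assume satisfy $y\le z$. Because $f_{n+1}$ is an increasing updating function, the pair $\bigl(Y_{n+1}(n,y),Y_{n+1}(n,z)\bigr)=\bigl(f_{n+1}(y,U_{n+1}),f_{n+1}(z,U_{n+1})\bigr)$ is an ordered coupling of $\LL(X_{n+1}\given X_n=y)$ and $\LL(X_{n+1}\given X_n=z)$. Since $\rho_{n+1}$ is linear, Lemma~\ref{lemma:linear-Kanto} ensures the Kantorovich distance is achieved by this ordered coupling, whence $\rho_n(y,z)=\EE\bigl[\rho_{n+1}(Y_{n+1}(n,y),Y_{n+1}(n,z))\bigr]$. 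I then invoke the composition property of the Propp--Wilson coupling, $Y_0(n,y)=Y_0\bigl(n+1,Y_{n+1}(n,y)\bigr)$, where the flow $Y_0(n+1,\cdot)$ on the right uses the innovations $U_{n+2},\dots,U_0$, which are independent of $U_{n+1}$. Conditioning on $U_{n+1}$, so that $Y_{n+1}(n,y)$ and $Y_{n+1}(n,z)$ become deterministic starting points, and applying the induction hypothesis followed by the tower property gives $\EE\bigl[\rho_{n+1}(Y_{n+1}(n,y),Y_{n+1}(n,z))\bigr]=\EE\bigl[\rho_0(Y_0(n,y),Y_0(n,z))\bigr]$, completing the induction.

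Finally, the identification with the Kantorovich distance between the conditional laws follows from the flow formula: for $y\le z$ the pair $\bigl(Y_0(n,y),Y_0(n,z)\bigr)$ is, by the same increasing-flow argument, an ordered coupling of $\LL(X_0\given X_n=y)$ and $\LL(X_0\given X_n=z)$ (with correct marginals, since $(Y_k(n,\cdot))_k$ carries the transition kernels of $(X_k)$), and as $\rho_0$ is a linear distance on the totally ordered set $A_0$, Lemma~\ref{lemma:linear-Kanto} gives $\EE[\rho_0(Y_0(n,y),Y_0(n,z))]=\rho_0'\bigl(\LL(X_0\given X_n=y),\LL(X_0\given X_n=z)\bigr)$. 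I expect the main obstacle to be the bookkeeping in the flow-composition step: one must justify inserting the level-$(n+1)$ expectation identity inside the expectation over $U_{n+1}$, which rests on the independence of successive innovations and the cocycle identity $Y_0(n,\cdot)=Y_0(n+1,\cdot)\circ Y_{n+1}(n,\cdot)$; the remaining steps are a routine combination of monotonicity and Lemma~\ref{lemma:linear-Kanto}.
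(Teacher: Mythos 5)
Your proof is correct and follows essentially the same route as the paper's: the same downward induction establishing linearity via Lemma~\ref{lemma:linear-Kanto}, the ordered coupling $\bigl(Y_{n+1}(n,y),Y_{n+1}(n,z)\bigr)$ achieving the Kantorovich distance, the cocycle identity $Y_0\bigl(n+1,Y_{n+1}(n,x)\bigr)=Y_0(n,x)$, and the final identification via the ordered coupling $\bigl(Y_0(n,y),Y_0(n,z)\bigr)$. The only difference is that you spell out the conditioning on $U_{n+1}$ and the independence of subsequent innovations needed to apply the induction hypothesis at the random points, a step the paper performs implicitly under ``by induction.''
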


\begin{proof}
The statement of the lemma obviously holds for $n=0$. Assume that it holds for $n+1$ ($n\le -1$). 
 Since the updating functions $f_n$ are increasing, for all $(y,z)$ in $A_n$, 
 the random pair $\Bigl( Y_{n+1}(n,y),Y_{n+1}(n,z)\Bigr)$ is an ordered coupling of $\LL(X_{n+1} \given X_n=y)$ and $\LL(X_{n+1} \given X_n=z)$.    
 Therefore  by Lemma~\ref{lemma:linear-Kanto} and using the linearity of $\rho_{n+1}$,
 $$  \rho_n(y,z)  := (\rho_{n+1})'\Bigl({\cal L}(X_{n+1} \given X_n=y), {\cal L}(X_{n+1} \given X_n=z)\Bigr)$$
 is a linear distance, and moreover
 $$
 \rho_n(y,z) =
\EE\Bigl[ \rho_{n+1}\Bigl( Y_{n+1}(n,y),Y_{n+1}(n,z)\Bigr) \Bigr]. $$ 
By induction, this is equal to
$$ \EE\Bigl[ \rho_0\Bigl(    Y_0( n+1,Y_{n+1}(n,y) )  ,  Y_0( n+1,Y_{n+1}(n,z) )   \Bigr)\Bigr].$$
Observe now that for any $x$, we have $Y_0\Bigl( n+1, Y_{n+1}(n,x)\Bigr)=Y_0(n,x)$. Hence, 
$$\rho_n(y,z)= \EE\Bigl[ \rho_0\Bigl( Y_0(n,y),Y_0(n,z)\Bigr)\Bigr].$$
Moreover, the random pair $\Bigl( Y_0(n,y),Y_0(n,z)\Bigr)$ is an ordered coupling of $\LL(X_0 \given X_n=y)$ and $\LL(X_0 \given X_n=z)$. 
Therefore, by Lemma~\ref{lemma:linear-Kanto}, since $\rho_0$ is linear, we get that $\rho_n(y,z)$ is the Kantorovich distance between 
$\LL(X_0 \given X_n=y)$ and $\LL(X_0 \given X_n=z)$ 
induced by $\rho_0$.
\end{proof}

\subsection{Proof of Theorem~\ref{thm:monotonic}}

We are now ready to prove the equivalence between the conditions stated in Theorem~\ref{thm:monotonic}. 

We have seen at the end of Section~\ref{sec:monotonicrepresentation} that there exists an increasing representation, thus $\textit{(c)} \implies \textit{(b)}$ is obvious.
$\textit{(b)} \implies \textit{(a)}$ stems from Lemma~\ref{lemma:generatingparam}. 
$\textit{(a)} \implies \textit{(e)}$ is obvious  and $\textit{(e)} \implies \textit{(d)}$ 
stems from Lemma~\ref{lemma:degenerate}.
The main point to show is $\textit{(d)} \implies \textit{(c)}$. 
Let  ${(f_n, U_n)}_{n \leq 0}$ be a parameterization of ${(X_n)}_{n \leq 0}$ 
with increasing updating functions $f_n$. We denote by $\rho$ the usual distance 
 on $\RR$. 

By hypothesis, for each fixed $n\le 0$, $\LL(X_n\given\FF_{-\infty})=\LL(X_n)$. 
Without loss of generality, we can assume that every $X_n$ takes its values in a compact 
subset of $\RR$. 
 Lemma~\ref{lemma:degenerate} then gives the $L^1$-convergence of 
 $\LL(X_n\given\FF_{m})$ to $\LL(X_n)$ as $m$ goes to $-\infty$: 
$$ s_m \egdef \EE\Bigl[ \rho'\bigl(\LL(X_n\given\FF_{m}),\LL(X_n)\bigr)\Bigr]\tend{m}{-\infty} 0. $$ 
Hence, for each $m$ there exists $x_m$ in the state space of $X_m$ such that 
$$ \rho'\bigl(\LL(X_n\given X_{m}=x_m),\LL(X_n)\bigr)\le s_m. $$ 
Consider the Propp-Wilson coupling construction of Section~\ref{sec:PWcoupling}. 
Since $\rho$ is a linear distance, and each $f_n$ is increasing, we can apply Lemma~\ref{lemma:linear-Kanto} to get
$$\EE\left[\rho\left(X_n, Y_n(m,x_m)\right) \given \FF_{m}\right] 
= \rho'\Bigl(\LL(X_n\given \FF_{m}),\LL(X_n\given X_{m}=x_{m}) \Bigr)$$
for every integer $m<n\le 0$. Taking the expectation on both sides yields
\begin{align*}
  \EE\left[\rho\left(X_n, Y_n(m,x_m)\right)  \right] 
&=\EE\left[ \rho'\Bigl(\LL(X_n\given \FF_{m}),\LL(X_n\given \FF_{m}=x_{m}) \Bigr) \right] \\
&\le \EE\left[ \rho'\Bigl(\LL(X_n\given \FF_{m}),\LL(X_n) \Bigr) \right] +
\EE\left[ \rho'\Bigl(\LL(X_n),\LL(X_n\given \FF_{m}=x_{m}) \Bigr) \right] \\
& \le 2 s_m \tend{m}{-\infty}0.
\end{align*}
Then $\textit{(c)}$ follows from Lemma~\ref{lemma:PWgenerating}. 

\medskip 
Now to prove {\it 2)} we take the sequence $(x_m)$ for $n=0$ and we use again the Propp-Wilson coupling. 
Assuming that the Markov process is identifiable, the iterated Kantorovich pseudometrics $\rho_n$ introduced in Section~\ref{sec:vershik} with initial distance $\rho_0=\rho$ are metrics.

By Proposition~\ref{ppsition:KantoMarkov}, for every integer $m\le n \le0$,
$$\rho_n\bigl(X_n,Y_n(m,x_m)\bigr)=
\EE\left[\rho_0\bigl(X_0, Y_0(m,x_m)\bigr) \bigm\vert X_n,Y_n(m,x_m)\right] .$$ 
We have seen in the first part of the proof that 
$$ \EE\left[\rho_0\bigl(X_0, Y_0(m,x_m)\bigr)\right] \tend{m}{-\infty}0 $$ 
 under the 
assumption $\LL(X_0\given\FF_{-\infty})=\LL(X_0)$. 
Thus, for every $n \leq 0$, 
the expectation $\EE\left[\rho_n\bigl(X_n, Y_n(m,x_m)\bigr)\right]$ 
 goes to 0 as $m\to-\infty$, and Lemma~\ref{lemma:PWgenerating} gives the result. 


\section{Multidimensional monotonic Markov processes}\label{sec:multimarkov}

We now want to prove a multidimensional version of 
Theorem~\ref{thm:monotonic}. 
However, as compared to the unidimensional case, the criterion we obtain only guarantee standardness 
of the filtration, but not the existence of a generating parameterization. 
In this section, ${(X_n)}_{n\le 0}$ is a Markov process taking its values in $\RR^d$ for some integer $d\ge1$ or $d=\infty$.  
For each $n\le 0$, we denote by $\mu_n$ the law of $X_n$, and by $A_n$ the support of $\mu_n$. 

\subsection{Monotonicity for multidimensional Markov processes}
We first have to extend the notion of monotonicity given in Definition~\ref{def:monotonic} to the case of
multidimensional Markov processes.

\begin{definition}\label{def:monotonic-multidim}
We say that $(X_n)$ is \emph{monotonic} if for each $n<0$, for all $x, x'$ in $A_n$, there exists a coupling $(Y,Y')$ of $\LL(X_{n+1}\given X_n=x)$ and $\LL(X_{n+1}\given X_n=x')$, whose distribution depends measurably on $(x,x')$, and which is \emph{well-ordered with respect to $(x,x')$}, which means that, for each $1\le k\le d$,
\begin{itemize}
  \item $x(k)\le x'(k)\Longrightarrow \PP\Bigl(Y(k)\le Y'(k)\Bigr)=1$,
  \item $x(k)\ge x'(k)\Longrightarrow \PP\Bigl(Y(k)\ge Y'(k)\Bigr)=1$.
\end{itemize}

\end{definition}

For example, ${(X_n)}_{n \leq 0}$ is a monotonic Markov process when 
the one-dimensional coordinate processes ${\big(X_n(k)\bigr)}_{n \leq 0}$ are independent 
monotonic Markov processes. 
But the definition does not require nor imply that the coordinate processes 
${\big(X_n(k)\bigr)}_{n \leq 0}$ are Markovian. 

Theorem \ref{thm:monotonic} will be generalized to $\RR^d$-valued monotonic 
processes in Theorem \ref{thm:monotonicmulti}, except that we will not get 
the simpler criteria 2) under the identifiability assumption. 
This will be obtained with the help of Vershik's criterion 
(Lemma~\ref{lemma:VershikMarkov}) in Theorem \ref{thm:fullymonotonicmulti} for \emph{strongly 
monotonic} Markov processes, defined below.

\begin{definition}\label{def:fullymonotonic}
A Markov process ${(X_n)}_{n \leq 0}$ taking its values in 
$\RR^d$ is said to be \emph{strongly monotonic} if it is monotonic in the sense of 
the previous definition and if in addition, denoting by $\FF$ the filtration it generates and by 
 $\FF(k)$ 
the filtration generated by the $k$-th coordinate process 
${\big(X_n(k)\bigr)}_{n \leq 0}$, the two following conditions hold:
\begin{enumerate}[{\rm a)}]
\item each process  ${\big(X_n(k)\bigr)}_{n \leq 0}$ is Markovian,
\item each filtration $\FF(k)$ is immersed in the filtration $\FF$,
\end{enumerate}
\end{definition}

Note that conditions a) and b) together mean that 
each process  ${\big(X_n(k)\bigr)}_{n \leq 0}$ is Markovian  
with respect to $\FF$. 

The proof of the following lemma is left to the reader.
\begin{lemma}\label{lemma:fullymonotonic}
Let  ${(X_n)}_{n \leq 0}$ be a strongly monotonic Markov process taking its values in 
$\RR^d$. Then each coordinate process 
${\big(X_n(k)\bigr)}_{n \leq 0}$ is a monotonic Markov process. 
\end{lemma}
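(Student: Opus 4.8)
The statement asserts two things about each coordinate process $\big(X_n(k)\big)_{n\le0}$: that it is Markovian, and that its transition kernels are increasing in the sense of Definition~\ref{def:monotonic}. The first is precisely condition~a) of strong monotonicity, so the plan is to concentrate on the increasing property of the scalar kernel $Q_n^k(a,\cdot):=\LL\big(X_{n+1}(k)\given X_n(k)=a\big)$.

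The key preliminary step is to relate this kernel, which conditions only on the scalar $X_n(k)$, to the conditional law given the whole vector $X_n$. This is where conditions a) and b) are used jointly: as noted just after Definition~\ref{def:fullymonotonic}, together they say that $\big(X_n(k)\big)_{n\le0}$ is Markovian with respect to the full filtration $\FF$, so that $\LL\big(X_{n+1}(k)\given\FF_n\big)=\LL\big(X_{n+1}(k)\given X_n(k)\big)$. On the other hand, $X_{n+1}(k)$ is a function of $X_{n+1}$ and the full process is Markov, hence $\LL\big(X_{n+1}(k)\given\FF_n\big)$ is the image of $\LL\big(X_{n+1}\given X_n\big)$ under the $k$-th projection, namely $\LL\big(X_{n+1}(k)\given X_n\big)$. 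Combining the two identities yields, for $\mu_n$-almost every $x$,
$$ \LL\big(X_{n+1}(k)\given X_n=x\big)=Q_n^k\big(x(k),\cdot\big). $$
In other words, the conditional marginal law of the $k$-th coordinate given the whole vector depends on $x$ only through $x(k)$ and agrees with the scalar kernel. I emphasize that immersion is essential here, not merely for Markovianity: condition~a) alone only controls $\LL(X_{n+1}(k)\given X_n(k))$, whereas the well-ordered coupling below will only produce domination of the laws conditioned on the full vector $X_n$.

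With this reduction in hand, the monotonicity follows from the well-ordered coupling supplied by Definition~\ref{def:monotonic-multidim}. Fix $x,x'\in A_n$ with $x(k)\le x'(k)$ and let $(Y,Y')$ be an associated well-ordered coupling of $\LL(X_{n+1}\given X_n=x)$ and $\LL(X_{n+1}\given X_n=x')$; by well-orderedness the $k$-th coordinates satisfy $Y(k)\le Y'(k)$ almost surely. The pair $\big(Y(k),Y'(k)\big)$ is therefore an ordered coupling of the marginal laws $\LL(X_{n+1}(k)\given X_n=x)$ and $\LL(X_{n+1}(k)\given X_n=x')$, which gives $\LL(X_{n+1}(k)\given X_n=x)\leqst\LL(X_{n+1}(k)\given X_n=x')$ for every such pair. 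By the displayed identity these laws are $Q_n^k(x(k),\cdot)$ and $Q_n^k(x'(k),\cdot)$, so the scalar kernel $Q_n^k$ is increasing.

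The main delicate point is measure-theoretic rather than conceptual: the domination of the vector-conditioned marginals holds for \emph{all} pairs $x,x'\in A_n$ with $x(k)\le x'(k)$, but the identity linking them to $Q_n^k$ holds only $\mu_n$-almost everywhere, so one must check that $Q_n^k(a,\cdot)\leqst Q_n^k(a',\cdot)$ survives for all relevant $a\le a'$ in the state space of $X_n(k)$ — equivalently, that a version of the kernel can be chosen that is everywhere increasing. I would handle this by disintegrating $\mu_n$ along its $k$-th marginal and invoking the measurable dependence of the coupling on $(x,x')$ required in Definition~\ref{def:monotonic-multidim}, so that the almost-sure ordering passes to $Q_n^k$ on the support of the marginal law.
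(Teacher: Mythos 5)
The paper offers no proof to compare against---it explicitly leaves this lemma to the reader---so your argument must stand on its own, and it does: it is correct. The chain of identifications is sound. Conditions a) and b) give $\LL\bigl(X_{n+1}(k)\given\FF_n\bigr)=\LL\bigl(X_{n+1}(k)\given X_n(k)\bigr)$, the Markov property of the full process identifies $\LL\bigl(X_{n+1}(k)\given\FF_n\bigr)$ with the $k$-th marginal $R_n^k(X_n,\cdot)$ of $P_{n+1}(X_n,\cdot)$, and the well-ordered coupling orders these marginals for every pair $x,x'\in A_n$ with $x(k)\le x'(k)$. You are also right to flag the version issue as the only delicate point, and your disintegration fix works: for $\nu$-almost every $a$ (with $\nu$ the law of $X_n(k)$) the fiber over $a$ contains some $x_a$ with $Q_n^k(a,\cdot)=R_n^k(x_a,\cdot)$, whence $Q_n^k(a,\cdot)\leqst Q_n^k(a',\cdot)$ for $\nu$-almost all pairs $a\le a'$. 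Note that this yields a kernel increasing off a $\nu$-null set rather than ``everywhere on the support,'' but since transition kernels are only determined up to null sets, that is all that is meaningful, and it suffices for every downstream use (the quantile updating functions are then almost surely increasing, which is what Proposition~\ref{ppsition:KantoMarkov} consumes). Also, the appeal to measurable dependence of the coupling in your last paragraph is not actually needed; plain disintegration of the a.e.\ identity does the job.

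One assertion is overstated: immersion is \emph{not} essential for the ordering step, contrary to your emphasis. Condition a) alone gives, by the tower property, $Q_n^k(a,\cdot)=\int R_n^k(x,\cdot)\,\mu_n\bigl(dx\given X_n(k)=a\bigr)$, i.e.\ the scalar kernel is a mixture of vector-conditioned marginals over the fiber. The well-ordered coupling gives $R_n^k(x,\cdot)\leqst R_n^k(x',\cdot)$ for \emph{every} $x$ in the fiber over $a$ and \emph{every} $x'$ in the fiber over $a'\ge a$, so at each point $t$ the infimum over the first fiber of the distribution functions dominates the supremum over the second, and the mixtures inherit the stochastic domination. So the conclusion of the lemma needs only multidimensional monotonicity together with condition a); condition b) lets you take the cleaner shortcut $\LL\bigl(X_{n+1}(k)\given X_n=x\bigr)=Q_n^k\bigl(x(k),\cdot\bigr)$, which is a perfectly legitimate use of the hypothesis, just not a necessary one.
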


The converse of Lemma~\ref{lemma:fullymonotonic} is false, as shown by the example below. 

\begin{example}[\emph{Random walk on a square}]\label{example:square}
Let ${(X_n)}_{n \leq 0}$ be the stationary random walk on the square 
$\{-1,1\}\times\{-1,1\}$, whose distribution is defined by:
\begin{itemize}
	\item \emph{(Instantaneous laws)} each $X_n$ has the uniform distribution 
	on $\{-1,1\}\times\{-1,1\}$;

	\item  \emph{(Markovian transition)} at each time, the process jumps at random 
	from one vertex of the square to one of its two connected vertices, more precisely, 
	 given $X_n=\bigl(x_n(1), x_n(2)\bigr)$, 
	the random variable $X_{n+1}$ takes either the value $\bigl(-x_n(1), x_n(2)\bigr)$ 
	or $\bigl(x_n(1), -x_n(2)\bigr)$ with equal probability. 
\end{itemize}
Each of the two coordinate processes ${\big(X_n(1)\bigr)}_{n \leq 0}$ and 
${\big(X_n(2)\bigr)}_{n \leq 0}$ is a sequence of independent random 
variables, therefore is a monotonic Markov process. It is not difficult to see 
in addition that each of them is Markovian with respect to the filtration $\FF$ of 
${(X_n)}_{n \leq 0}$, hence the two conditions of Lemma~\ref{lemma:fullymonotonic} 
hold true. 
But one can easily check   that the process $(X_n)$ does not satisfy the conditions of Definition~\ref{def:monotonic-multidim}.

Note that the tail $\sigma$-field $\FF_{-\infty}$ is not degenerate because of the 
periodicity of  ${(X_n)}_{n \leq 0}$, hence we obviously know that standardness does not hold 
for $\FF$. 
\end{example}

\subsection{Standardness for monotonic multidimensional Markov processes}\label{sec:monotonicmulti}


Since we are interested in the filtration generated by ${(X_n)}_{n\le 0}$, one can assume without loss 
of generality that the support $A_n$ of the law of $X_n$ is included in $[0,1]^d$ for every $n \leq 0$.  
Indeed, applying a strictly increasing transformation on 
each coordinate of the process alters neither the Markov and the monotonicity properties, nor 
the $\sigma$-fields $\sigma(X_n)$. 

\begin{thm}\label{thm:monotonicmulti}
Let  ${(X_n)}_{n \leq 0}$ be a $d$-dimensional monotonic Markov process, and $\FF$ the filration it generates.
The following conditions are equivalent. 
	\begin{enumerate}[{(a)}]
	\item $\FF$ is Kolmogorovian.
	
	\item For every $n \leq 0$, the conditional law ${\cal L}(X_{n} \given \FF_{-\infty})$ is almost surely equal to ${\cal L}(X_{n})$.
	
	\item $\FF$ is standard.
	
	\end{enumerate}
\end{thm}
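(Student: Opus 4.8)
The three conditions form a short cycle, so the plan is to prove \textit{(c)}$\Rightarrow$\textit{(a)}$\Rightarrow$\textit{(b)}$\Rightarrow$\textit{(c)}. The first implication is the general necessary condition recalled in the introduction: standardness forces the tail $\sigma$-field $\FF_{-\infty}$ to be degenerate. For \textit{(a)}$\Rightarrow$\textit{(b)}, a degenerate $\FF_{-\infty}$ trivializes conditioning, so $\LL(X_n\given\FF_{-\infty})=\LL(X_n)$ almost surely for every $n$ (equivalently, this follows at once from Lemma~\ref{lemma:degenerate}). All the substance lies in \textit{(b)}$\Rightarrow$\textit{(c)}, which I would deduce from Vershik's criterion in the form of Proposition~\ref{ppsition:vershikmarkov}: it is enough to verify that $X_{n_0}$ satisfies the $V'$ property for every $n_0\le 0$.

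Fix $n_0\le 0$. Since the $V'$ property is insensitive to the choice of initial pseudometric, I would equip $[0,1]^d$ with the weighted $\ell^1$ metric $\rho_{n_0}(x,y)\egdef\sum_k w_k\,|x(k)-y(k)|$, taking $w_k=1$ when $d<\infty$ and $w_k=2^{-k}$ when $d=\infty$, so that $\rho_{n_0}$ is a compact metric in either case. Iterating the Kantorovich construction of Section~\ref{sec:vershik} produces pseudometrics $\rho_n$ for $n\le n_0$, and the target becomes $\EE[\rho_n(X'_n,X''_n)]\tend{n}{-\infty}0$ for two independent copies $X'_n,X''_n$ of $X_n$.

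The core of the proof is a multidimensional analogue of Proposition~\ref{ppsition:KantoMarkov}, and this is where I expect the main obstacle. Feeding the measurably chosen well-ordered couplings of Definition~\ref{def:monotonic-multidim} into the Propp-Wilson construction, I would build from any pair $(y,z)$ at time $n$ a Markovian coupling $\bigl(Y_m(n,y),Y_m(n,z)\bigr)_{n\le m\le n_0}$ of the two trajectories, enjoying the flow identity $Y_{n_0}(n,y)=Y_{n_0}\bigl(n+1,Y_{n+1}(n,y)\bigr)$. The point is coordinatewise: well-orderedness propagates $y(k)\le z(k)\Rightarrow Y_m(n,y)(k)\le Y_m(n,z)(k)$ by induction on $m$ (and likewise for $\ge$), so each pair $\bigl(Y_{n_0}(n,y)(k),Y_{n_0}(n,z)(k)\bigr)$ is an \emph{ordered} coupling of the one-dimensional laws $\LL\bigl(X_{n_0}(k)\given X_n=y\bigr)$ and $\LL\bigl(X_{n_0}(k)\given X_n=z\bigr)$. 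Because $\rho_{n_0}$ is linear along each axis, Lemma~\ref{lemma:linear-Kanto} then gives $\EE\bigl[|Y_{n_0}(n,y)(k)-Y_{n_0}(n,z)(k)|\bigr]=|g_k(y)-g_k(z)|$ with $g_k(w)\egdef\EE[X_{n_0}(k)\given X_n=w]$. Since each one-step well-ordered coupling is merely one admissible coupling in the infimum defining the Kantorovich pseudometric, telescoping through the flow identity (exactly as in Proposition~\ref{ppsition:KantoMarkov}) yields
\begin{equation*}
  \rho_n(y,z)\ \le\ \EE\bigl[\rho_{n_0}\bigl(Y_{n_0}(n,y),Y_{n_0}(n,z)\bigr)\bigr]\ =\ \sum_k w_k\,\bigl|g_k(y)-g_k(z)\bigr|.
\end{equation*}
The difficulty is precisely that monotonicity is only coordinatewise: there is no global order to exploit, so a single ordered coupling will not do, and one must decompose along coordinates and recombine through the linearity of the $\ell^1$ metric.

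It remains to average and pass to the limit. Setting $y=X'_n,\ z=X''_n$ gives $\EE[\rho_n(X'_n,X''_n)]\le\sum_k w_k\,\EE\bigl[|g_k(X'_n)-g_k(X''_n)|\bigr]$. By the Markov property $g_k(X_n)=\EE[X_{n_0}(k)\given\FF_n]$, a bounded reversed martingale, so L\'evy's reversed martingale convergence theorem gives $g_k(X_n)\to\EE[X_{n_0}(k)\given\FF_{-\infty}]$ in $L^1$; under hypothesis \textit{(b)} this limit is the constant $\EE[X_{n_0}(k)]$. Hence $g_k(X'_n)$ and $g_k(X''_n)$ both converge in $L^1$ to the same constant, each summand tends to $0$, and as the summands are bounded by $w_k$ with $\sum_k w_k<\infty$, dominated convergence gives $\EE[\rho_n(X'_n,X''_n)]\tend{n}{-\infty}0$. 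This is the $V'$ property for $X_{n_0}$; since $n_0$ was arbitrary, Proposition~\ref{ppsition:vershikmarkov} shows that $\FF$ is standard, completing the cycle.
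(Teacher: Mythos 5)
Your proof is correct, and it takes a genuinely different route from the paper's. The paper proves \textit{(b)}$\implies$\textit{(c)} by a direct coupling-from-the-past construction: it builds a copy ${(Z_n)}_{n\le0}$ of the process as the almost-sure limit of processes $Z^j$ launched from carefully chosen states $x_{n_j}\in A_{n_j}\setminus M_{n_j}^j$, obtains the Cauchy estimate $\EE\left[\rho\left(Z_n^j,Z_n^{j+1}\right)\right]\le 3\times 2^{-j}$ from the well-ordered couplings (via the same coordinatewise linear-metric computation you perform, cf.~\eqref{eq:well-ordered-coupling} and Lemma~\ref{lemma:linear-Kanto}), and then proves immersion of the limit process's filtration in the product-type filtration $\UU$ by a Lusin-approximation argument on the kernel maps $\varphi_n$. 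You instead route everything through Vershik's criterion in its non-identifiable form (Proposition~\ref{ppsition:vershikmarkov}), the key step being the upper bound $\rho_n(y,z)\le\sum_k w_k\left|g_k(y)-g_k(z)\right|$ followed by reversed-martingale convergence; your two crucial observations are sound, namely that coordinatewise order propagates through the pair chain because each one-step coupling of Definition~\ref{def:monotonic-multidim} is well-ordered with respect to the \emph{current} pair, and that mere monotonicity yields an \emph{inequality} on the iterated Kantorovich metrics, where the paper obtains an equality (Proposition~\ref{ppsition:kantomulti}) only under strong monotonicity with Markovian coordinates. This is precisely what lets your argument cover Theorem~\ref{thm:monotonicmulti} in full generality, whereas the paper reserves the Vershik-criterion route for the strongly monotonic, identifiable case (Theorem~\ref{thm:fullymonotonicmulti}). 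What each approach buys: yours is considerably shorter, bypassing the delicate choice of the sequences $(n_j)$ and $(x_{n_j})$ and the immersion verification; but it leans on Proposition~\ref{ppsition:vershikmarkov}, whose proof the paper only admits from~\cite{LauTeoriya} and~\cite{LauEntropy}, while the paper's construction is self-contained and yields more structural information, namely an explicit copy of the process measurable with respect to, and immersed in, a product-type filtration.
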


\begin{proof}
  We only have to prove that \textit{(b)} implies \textit{(c)}.  
  
  We consider a family ${(U_n^j)}_{n\le 0,j\ge1}$ of independent random variables, uniformly distributed on $[0,1]$. The standardness of the filtration generated by ${(X_n)}_{n\le0}$ will be proved by constructing a copy ${(Z_n)}_{n\le0}$ of ${(X_n)}_{n\le0}$ such that 
  \begin{itemize}
    \item For each $n\le0$, $Z_n$ is measurable with respect to the $\sigma$-algebra $\UU_n$ generated by ${(U_m^j)}_{m\le n,j\ge1}$. (Observe that the filtration $\UU\egdef{(\UU_n)}_{n\le0}$ is of product type.)
    \item The filtration generated by ${(Z_n)}_{n\le0}$ is immersed in $\UU$.
  \end{itemize}
For each $j\ge1$,  using the random variables $U_n^j$ 
we will construct inductively a process $Z^j\egdef {(Z_n^j)}_{n_j\le n\le 0}$, 
where ${(n_j)}_{j \geq 1}$ is a decreasing sequence of negative integers to be precised later.  
Each $Z_n$ will then be obtained as an almost-sure limit, as $j\to\infty$, of the sequence ${(Z_n^j)}$.

\subsubsection*{Construction of a sequence of processes}

We consider as in Section \ref{sec:markovstandard} that the Markovian transitions 
are given by kernels $P_n$.
For every $n <0$, we take an updating function $f_{n+1}:A_n\times[0,1]\to A_{n+1}$ 
such that $\LL\bigl(f_n(x,U)\bigr) =P_n(x, \cdot)$ for every $x \in A_n$ 
whenever $U$ is uniformly distributed on $[0,1]$. 
 
To construct the first process $Z^1$, we choose an appropriate point $x_{n_1}\in A_{n_1}$ (which is also to be precised later), and set $Z_{n_1}^1\egdef x_{n_1}$. Then for $n_1\le n<0$, we inductively define
\[  Z_{n+1}^1\egdef f_{n+1}\left(Z_{n}^1,U_{n+1}^1\right),\]
so that
\[ \LL\left( Z^1 \right)  = \LL\left({(X_n)}_{n_1\le n\le 0}\given X_{n_1}=x_{n_1}\right).\]

\begin{figure}
\begin{center}  \includegraphics{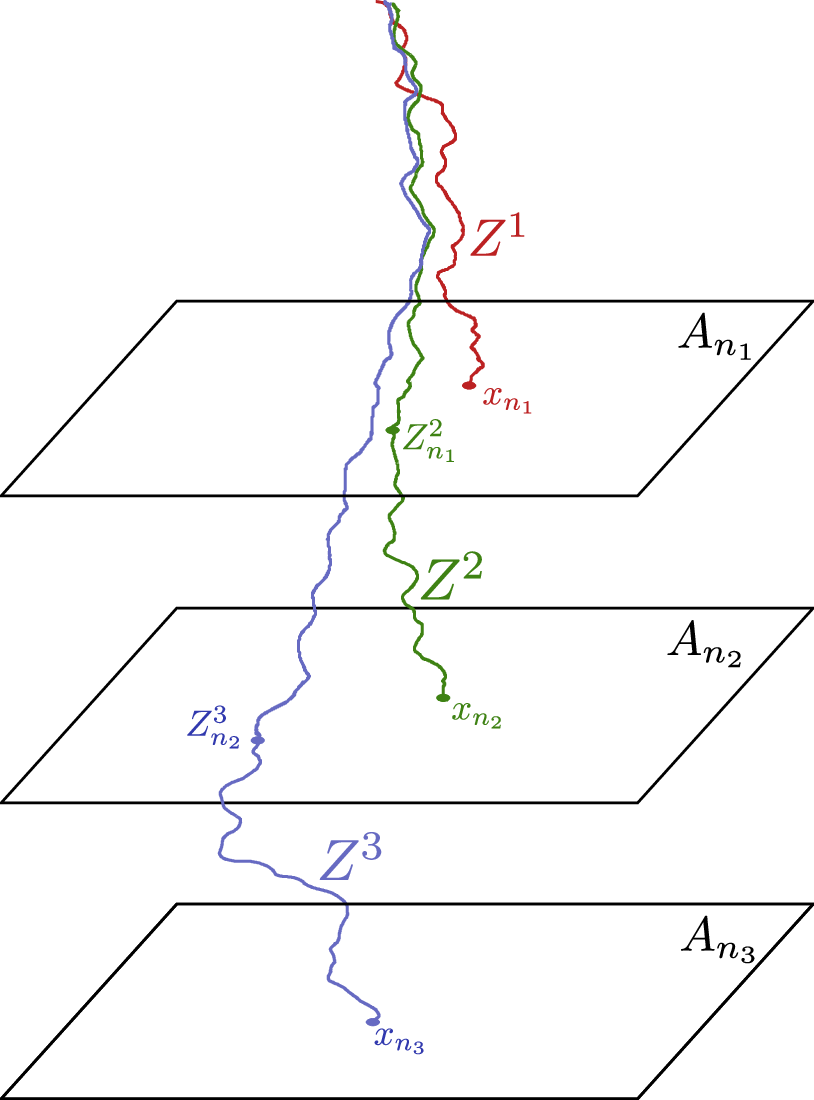}  \end{center}
\caption{Construction of the sequence of processes $(Z^j)$. The processes $Z^j$ and $Z^{j+1}$ are coupled in a well-ordered way from time $n_j$ to 0.}
  \label{Fig:Z}
\end{figure}

Assume that we have constructed the processes $Z^i$ for all $1\le i\le j$. Then we get the process $Z^{j+1}$ by choosing an appropriate point $x_{n_{j+1}}\in A_{n_{j+1}}$, setting $Z_{n_{j+1}}^{j+1}\egdef x_{n_{j+1}}$, and inductively
\[  Z_{n+1}^{j+1}\egdef
\begin{cases}
  f_{n+1}\left(Z_{n}^{j+1},U_{n+1}^{j+1}\right) & \text{ for }n_{j+1}\le n<n_j,\\
  f^{j+1}_{n+1}\left(Z_{n}^{j},Z_{n}^{j+1},Z_{n+1}^{j},U_{n+1}^{j+1}\right) & \text{ for }n_{j}\le n<0.
\end{cases}
\]
where the function $f^{j+1}_{n+1}$ is recursively obtained as follows. 
Let $(Z, Z', Z_+, Z'_+)$ be a random four-tuple such that 
$\LL(Z, Z') =\LL\left(Z_{n}^{j}, Z_{n}^{j+1}\right)$ and 
$\LL\Bigl((Z_+, Z'_+) \given Z, Z'\Bigr) = \Lambda_{Z_1, Z'_1}$,
where $\Lambda_{x,x'}$ is the well-ordered coupling of Definition \ref{def:monotonic-multidim}. 
Recall that the first and second margins of $\Lambda_{x,x'}$  
are $P_{n+1}(x,\cdot)$ and $P_{n+1}(x',\cdot)$.
Now, consider a kernel $Q$ being a regular version of the conditional distribution 
$\LL(Z'_+ \given Z, Z', Z_+)$, and then take $f^{j+1}_{n+1}$ such that 
$\LL\left(f^{j+1}_{n+1}(z, z', z_+, U)\right) = Q\bigl((z, z', z_+), \cdot\bigr)$ 
for every $(z, z', z_+) \in A_{n} \times A_n \times A_{n+1}$ whenever 
$U$ is uniformly distributed on $[0,1]$. 

In this way, we get by construction
\begin{equation}
  \label{eq:noel} \forall n_j\le n<0, \quad \LL\Bigl(Z_{n+1}^{j+1} \given Z_{n}^{j}, Z_{n}^{j+1}\Bigr) 
= P_{n+1}(Z_{n}^{j+1},\cdot).
\end{equation}
Moreover, we easily prove by induction that $Z_n^j$ is measurable with respect to 
$\sigma(U_{m,i}; m \leq n, 1 \leq i \leq j) \subset {\cal U}_n$ 
for all possible $n$ and $j$.
Now, we want to prove that, for all $j\ge1$ and all $n_j\le n<0$,
\begin{equation}\label{eq:immersion}
\LL(Z_{n+1}^{j} \given {\cal U}_n) 
= P_{n+1}(Z_{n}^{j},\cdot).
\end{equation}
This equality stems from the definition of $f_{n+1}$ for $j=1$. 
Assuming the equality holds for $j$, we show that it holds for $j+1$ as follows. 
When $n_{j+1} \leq n < n_{j}$, this comes again from the definition of $f_{n+1}$.
If $n_{j} \leq n < 0$,  since $Z_{n+1}^{j+1} = f^{j+1}_{n+1}\left(Z_{n}^{j},Z_{n}^{j+1},Z_{n+1}^{j},U_{n+1}^{j+1}\right) $, where $U_{n+1}^{j+1}$ is independent of $(Z_{n}^{j},Z_{n}^{j+1},Z_{n+1}^{j})$, we get
$$\LL(Z_{n+1}^{j+1} \given {\cal U}_n \vee Z_{n+1}^{j}) = 
\LL(Z_{n+1}^{j+1} \given Z_n^{j}, Z_n^{j+1}, Z_{n+1}^{j}).$$
Using the induction hypothesis, we know that $\LL(Z_{n+1}^{j} \given {\cal U}_n) = \LL(Z_{n+1}^{j} \given Z_n^j)$, and we can write
$$\LL(Z_{n+1}^{j+1} \given {\cal U}_n) = 
\LL(Z_{n+1}^{j+1} \given Z_n^{j}, Z_n^{j+1}).$$
Recalling~\eqref{eq:noel}, we conclude that~\eqref{eq:immersion} holds for $j+1$.

{From}~\eqref{eq:immersion}, it follows that
\[ \LL\left( Z^{j} \right)  = \LL\left({(X_n)}_{n_{j}\le n\le 0}\given X_{n_{j}}=x_{n_{j}}\right)\]
for every $j \geq 1$. 
Moreover, given $Z_{n_j}^{j+1}$, the processes $Z^j$ and $Z^{j+1}$ are coupled from $n_j$ in a well-ordered way with respect to $\left(x_{n_j},Z_{n_j}^{j+1}\right)$. (See Figure~\ref{Fig:Z}.)

\subsubsection*{Choice of the sequences $(n_j)$ and $(x_{n_j})$}

In this part we explain how we can choose the sequences $(n_j)$ and $(x_{n_j})$ so that 
\begin{equation}
  \label{eq:cond1}
  \forall n\le0,\ Z_n^j\text{ converges almost surely as }j\to \infty.
\end{equation}
Moreover, to ensure that the filtration generated by the limit process ${(Z_n)}_{n\le0}$ is immersed in $\UU$, we will also require the following convergence:
\begin{equation}
  \label{eq:cond2}
  \forall n\le-1,\ \LL\left(Z_{n+1}^j\given Z_n^j\right) \tend[a.s.]{j}{\infty}\LL\left(Z_{n+1}\given Z_n\right).
\end{equation}

Recall we assumed that $A_n\subset[0,1]^d$. Let us define the distance $\rho$ on $A_n$ by
$\rho(x,x')\egdef\sum_{k=1}^d a_k|x(k) - x'(k)|$, 
where, in order to handle the case when $d=\infty$, we take a sequence 
${(a_k)}_{k=1}^d$ of positive numbers satisfying $\sum a_k =1$. 
For any $j\ge1$, we also define the distance $\Delta_j$ on ${(\RR^d)}^j$ by 
\[
  \Delta_j\bigl((x_1,\ldots,x_j),(y_1,\ldots,y_j)\bigr) \egdef \max_{1 \leq \ell \leq j}\rho(x_\ell,y_\ell).
\]
Let us introduce, for $j\ge 1$,  and $\ell\le -j$, the measurable subset of $A_\ell$
\[
  M_\ell^j \egdef \left\{x\in A_{\ell}: \Delta_j'\left(
\LL\Bigl({(X_n)}_{-j<n\le0}\given X_{\ell}=x\Bigr),\LL\Bigl({(X_n)}_{-j<n\le0}\Bigr)\right) > 2^{-j}\right\}.
\]

Applying Lemma~\ref{lemma:degenerate} and using hypothesis \textit{(b)} 
of Theorem \ref{thm:monotonicmulti}, for each $j\ge1$, 
\begin{equation}
  \label{eq:mu_Mlj}
  \mu_{\ell}\left(M_\ell^j\right)\tend{\ell}{-\infty}0.
\end{equation}

For each $n\le -1$, we denote by $\MM_1(A_{n+1})$ the set of probability measures on $A_{n+1}$, equipped with the Kantorovich distance $\rho'$.
We also consider $\varphi_n:A_n\to \MM_1(A_{n+1})$, defined by
\[
  \varphi_n(z)\egdef \LL\left(X_{n+1}\given X_n=z\right).
\]
Since $\varphi_n$ is a measurable function, we can apply Lusin Theorem to get the existence, for any $k\ge1$, of a continuous approximation $\varphi_n^k$ of $\varphi_n$, such that
\begin{equation}
\label{eq:Lusin}
\mu_n\left(\varphi_n\neq\varphi_n^k\right) < 2^{-k} . 
\end{equation}

Let us choose $n_1$ and $x_{n_1}$: By~\eqref{eq:mu_Mlj}, we can choose $|n_1|$  large enough so that $\mu_{n_1}\left(M_{n_1}^1\right)<2^{-1}$, and then choose $x_{n_1}\in A_{n_1}\setminus M_{n_1}^1$.

Assume now that for some $j\ge2$ we have already chosen $n_{j-1}$ such that $\mu_{n_{j-1}}\left(M_{n_{j-1}}^{j-1}\right)<2^{-(j-1)}$ and 
$x_{n_{j-1}}\in A_{n_{j-1}}\setminus M_{n_{j-1}}^{j-1}$. 
By Lemma~\ref{lemma:degenerate} and using hypothesis \textit{(b)}, we get
\[
  \PP\left( X_{n_{j-1}}\in M_{n_{j-1}}^{j-1}\given X_\ell\right) \tend[a.s.]{\ell}{-\infty} \mu_{n_{j-1}}\left(M_{n_{j-1}}^{j-1}\right)<2^{-(j-1)},
\]
and for each $n,k$, $-j\le n\le 0$, $1\le k\le j$,
\[
  \PP\left( \varphi_n(X_n)\neq\varphi_n^k(X_n)\given X_\ell\right) \tend[a.s.]{\ell}{-\infty} \mu_n\left(\varphi_n\neq\varphi_n^k\right) < 2^{-k} . 
\]
Therefore, using also~\eqref{eq:mu_Mlj}, if $|n_j|$ is large enough, we will have
\[
  \mu_{n_{j}}\left(M_{n_{j}}^{j}\right)<2^{-j},
\]
and there exists $x_{n_j}\in A_{n_j}\setminus M_{n_{j}}^{j}$ such that
\begin{equation}
  \label{eq:inequality1}
  \PP\left( X_{n_{j-1}}\in M_{n_{j-1}}^{j-1}\given X_{n_j}=x_{n_j}\right) < 2^{-(j-1)},
\end{equation}
as well as 
\begin{equation}
  \label{eq:inequality2}
  \forall n,k,\ -j\le n\le 0,\ 1\le k\le j,\ \PP\left( \varphi_n(X_n)\neq\varphi_n^k(X_n)\given X_{n_j}=x_{n_j}\right)<2^{-k}.
\end{equation}

\subsubsection*{Convergence of the sequence of processes}
We want to prove that, for each $n\le0$, with the above choice of $(n_j)$ and $(x_{n_j})$,  the sequence ${(Z_n^j)}_{j\ge -n}$ is almost surely a Cauchy sequence.

Since we used well-ordered couplings in the construction of the processes $Z^j$, and since the distance $\delta$ defined by the absolute value on $\RR$ is linear, by application of Lemma~\ref{lemma:linear-Kanto},  we have, when $-j \leq n < 0$
\begin{align*}
   \EE\left[ \rho\left(Z_n^j,Z_n^{j+1}\right) \given Z_{n_j}^{j+1} \right] 
   & = \sum_{k=1}^d a_k\EE\left[ \left|Z_n^j(k) - Z_n^{j+1}(k)\right| \given Z_{n_j}^{j+1} \right] \\
   & = \sum_{k=1}^d a_k\delta' \Bigl( \LL\left(Z_n^j(k) \given Z_{n_j}^{j+1}\right), \LL\left(Z_n^{j+1}(k) \given Z_{n_j}^{j+1}\right)  \Bigr)\\
   & \leq \rho'\Bigl( \LL\left(Z_n^j \given Z_{n_j}^{j+1}\right), \LL\left(Z_n^{j+1} \given Z_{n_j}^{j+1}\right)  \Bigr)\\
   &= \rho'\left( \LL(X_n\given X_{n_j}=x_{n_j}),\LL(X_n\given X_{n_j}=Z_{n_j}^{j+1})\right),\numberthis \label{eq:well-ordered-coupling}
\end{align*}
the inequality coming from the fact that the minimum of a sum is larger than the sum of the minima. Note that, since the converse inequality is obvious by definition of the Kantorovich distance $\rho'$, the above inequality is in fact an equality. Then, by the triangular inequality, we can bound $\EE\left[ \rho\left(Z_n^j,Z_n^{j+1}\right) \given Z_{n_j}^{j+1} \right]$ by the sum
\begin{equation}
\label{eq:triangle}
  \rho'\left(\LL(X_n\given X_{n_j}=x_{n_j}),\LL(X_n)\right) 
  +  \rho'\left(\LL(X_n),\LL(X_n\given X_{n_j}=Z_{n_j}^{j+1})\right) .
\end{equation}
Recall we chose $x_{n_j}\in A_{n_j}\setminus M_{n_j}^j$, which ensures by definition of $M_{n_j}^j$ that the first term of~\eqref{eq:triangle} is bounded by $2^{-j}$. Moreover, the second term of~\eqref{eq:triangle} can be bounded by
\[
\indic_{Z_{n_j}^{j+1}\in M_{n_j}^j}+2^{-j}\indic_{Z_{n_j}^{j+1}\notin M_{n_j}^j}.
\]
By~\eqref{eq:inequality1}, for each $-j< n\le 0$,
\[
  \PP\left(Z_{n_j}^{j+1}\in M_{n_j}^j\right) = \PP\left(X_{n_j}\in M_{n_j}^j\given X_{n_{j+1}}=x_{n_{j+1}}\right) < 2^{-j}.
\]
Thus, by integrating with respect to $Z_{n_j}^{j+1}$, we obtain that $\EE\left[ \rho\left(Z_n^j,Z_n^{j+1}\right)  \right] $ is bounded above by 
\[
2^{-j}+2^{-j}+\PP\left(Z_{n_j}^{j+1}\in M_{n_j}^j\right)
\le 3 \times 2^{-j}.
\]
%
Therefore, for each fixed $n\le0$, ${(Z_n^j)}_{j>-n}$ is almost surely a Cauchy sequence and converges almost surely to some limit $Z_n$, which is measurable with respect to the $\sigma$-algebra $\UU_n$ generated by ${(U_m^j)}_{m\le n,j\ge1}$.

Observe that for any fixed $m\le 0$, since $x_{n_j}$ has been chosen in $A_{n_j}\setminus M_{n_j}^j$,
\[ 
  \LL\left({(Z_n^j)}_{m\le n\le 0}\right)=\LL\left({(X_n)}_{m\le n\le 0}\given X_{n_j}=x_{n_j}\right)\tend{j}{\infty} \LL\left({(X_n)}_{m\le n\le 0}\right).
\] 
Hence, we conclude that ${(Z_n)}_{n\le 0}$ is a copy of ${(X_n)}_{n\le 0}$.

\subsubsection*{Proof of the immersion of ${(Z_n)}_{n\le0}$ in $\UU$}

We need to prove that for all $n<0$, $\LL(Z_{n+1}\given \UU_n)=\LL(Z_{n+1}\given Z_n)$. 
We have already seen that 
\[
 \LL\left(Z_{n+1}^{j}\given \UU_n\right)= \LL\left(X_{n+1} \given X_n=Z_n^{j}\right) = \LL\left(Z_{n+1}^{j} \given Z_n^{j}\right).
\]
We now want to take the limit as $j\to\infty$. For any continuous function $g$ on $A_{n+1}$, we have 
$$
\EE\left[ g(Z_{n+1}^{j})\given \UU_n\right] \tend[a.s.]{j}{\infty} 
 \EE\left[g(Z_{n+1})\given \UU_n\right]
$$
by the conditional dominated convergence theorem. 
Therefore, 
by Lemma~\ref{lemma:convergence_conditional_law}, 
\[
  \LL(Z_{n+1}^{j}\given \UU_n)=\LL\left(Z_{n+1}^{j} \given Z_n^{j}\right)\tend[a.s.]{j}{\infty} \LL(Z_{n+1}\given \UU_n).
  \]
By the dominated convergence theorem, we then get
\begin{multline}
  \label{eq:convergence}\EE \left[ \rho' \left( \LL\left(Z_{n+1}^{j} \given Z_n^{j}\right) ,  \LL\left(Z_{n+1} \given Z_n\right) \right) \right] \\ \tend{j}{\infty} 
  \EE \left[ \rho' \left( \LL\left(Z_{n+1} \given  \UU_n\right) ,  \LL\left(Z_{n+1} \given Z_n\right) \right) \right].
\end{multline}
On the other hand, the LHS of the preceding formula can be rewritten as $\EE\left[\rho'\left(\varphi_n \left(Z_n^j\right),\varphi_n\left(Z_n\right)\right)\right]$, and bounded by the sum of the three following terms:
\begin{align*}
  T_1&\egdef \EE\left[\rho'\left(\varphi_n \left(Z_n^j\right),\varphi_n^k\left(Z_n^j\right)\right)\right],\\
  T_2&\egdef \EE\left[\rho'\left(\varphi_n^k \left(Z_n^j\right),\varphi_n^k\left(Z_n\right)\right)\right],\\
  T_3&\egdef \EE\left[\rho'\left(\varphi_n^k \left(Z_n\right),\varphi_n\left(Z_n\right)\right)\right].
\end{align*}
Using~\eqref{eq:Lusin}, $T_3\le 2^{-k}$ which can be made arbitrarily small by fixing $k$ large enough. Once $k$ has been fixed, $T_2\tend{j}{\infty}0$ 
by continuity of $\varphi_n^k$ and dominated convergence. 
Then, remembering~\eqref{eq:inequality2}, we get $T_1<2^{-k}$ as soon as $j\ge |n|$  and $j\ge k$. 
This proves that 
\[
  \EE\left[\rho'\left(\varphi_n \left(Z_n^j\right),\varphi_n\left(Z_n\right)\right)\right]\tend{j}{\infty}0.
\]
Comparing with~\eqref{eq:convergence}, we get the desired equality
\[
\LL\left(Z_{n+1}\given \UU_n\right)= \LL\left(Z_{n+1}\given Z_n\right).
\]
\end{proof}

\subsection{Computation of iterated Kantorovich metrics}\label{sec:multimonotonic_kantorovich}

Here we assume that  ${(X_n)}_{n \leq 0}$ is a \emph{strongly} monotonic Markov process
(Definition~\ref{def:fullymonotonic}). 
As before, we assume without loss of generality that it takes its values in $[0,1]^d$ 
equipped with the distance $\rho$ on ${[0,1]}^d$ defined by
$\rho(x,x')\egdef\sum_{k=1}^d a_k|x(k) - x'(k)|$, 
where ${(a_k)}_{k=1}^d$ is a sequence 
of positive numbers satisfying $\sum a_k =1$, whose role is 
to  handle the case when $d=\infty$. 

The purpose of this section is to establish a connection between the iterated Kantorovich metrics $\rho_n$ initiated by $\rho$ and those associated to the Markov processes ${\bigl(X_n(k)\bigr)}_{n \leq 0}$, initiated by the distance $\delta$ defined by the absolute value on $\RR$. 
Then, with the help of Vershik's criterion 
(Lemma \ref{lemma:VershikMarkov}), we will establish the analogue of criterion 2) 
in Theorem~\ref{thm:monotonic}. 

\begin{lemma}\label{lemma:kanto_mordered}  
For each $\ell\le0$, and each $n \in \{\ell, \ldots, 0\}$, 
\begin{multline*}
\rho'\left(\LL(X_n \given X_\ell=x_\ell), \LL(X_n \given X_\ell=x'_\ell)\right)  
\\
= \sum_{k=1}^d 
a_k \delta'\left(\LL\bigl(X_n(k) \given X_\ell(k)=x_\ell(k)\bigr), \LL\bigl(X_n(k) \given X_\ell(k)=x'_\ell(k)\bigr)\right) .
\end{multline*}
\end{lemma}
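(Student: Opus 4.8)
The plan is to establish the identity by proving the two matching inequalities separately, the crux in both directions being a reduction of the coordinate-wise conditioning made possible by strong monotonicity. Recall (see the remark following Definition~\ref{def:fullymonotonic}) that for a strongly monotonic process each coordinate process ${\bigl(X_n(k)\bigr)}_{n\le0}$ is Markovian with respect to the full filtration $\FF$. Iterating this property from time $\ell$ up to time $n$ via the tower property yields $\LL\bigl(X_n(k)\given\FF_\ell\bigr)=\LL\bigl(X_n(k)\given X_\ell(k)\bigr)$, hence for ($\mu_\ell$-almost) every $x_\ell$,
$$\LL\bigl(X_n(k)\given X_\ell=x_\ell\bigr)=\LL\bigl(X_n(k)\given X_\ell(k)=x_\ell(k)\bigr).$$
This identification is the bridge between the left- and right-hand sides of the lemma, and I would prove it first.

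For the lower bound I would take an arbitrary coupling $(V,V')$ of $\LL(X_n\given X_\ell=x_\ell)$ and $\LL(X_n\given X_\ell=x'_\ell)$ and write $\EE[\rho(V,V')]=\sum_{k}a_k\,\EE\bigl[|V(k)-V'(k)|\bigr]$. For each $k$ the pair $\bigl(V(k),V'(k)\bigr)$ is a (not necessarily ordered) coupling of the one-dimensional marginals, which by the identification above are $\LL\bigl(X_n(k)\given X_\ell(k)=x_\ell(k)\bigr)$ and $\LL\bigl(X_n(k)\given X_\ell(k)=x'_\ell(k)\bigr)$; hence $\EE[|V(k)-V'(k)|]\ge\delta'(\cdots)$ by definition of the Kantorovich distance. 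Summing with the weights $a_k$ and taking the infimum over couplings gives the inequality $\ge$ in the lemma. This direction uses no monotonicity beyond the coordinate-marginal identification.

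For the upper bound I would exhibit a single coupling attaining the right-hand side. Starting from $(Y_\ell,Y'_\ell)=(x_\ell,x'_\ell)$, I would build a coupling ${(Y_m,Y'_m)}_{\ell\le m\le n}$ step by step, drawing $(Y_{m+1},Y'_{m+1})$ from the well-ordered coupling $\Lambda_{Y_m,Y'_m}$ of Definition~\ref{def:monotonic-multidim} relatively independently of the past given $(Y_m,Y'_m)$, the chaining over successive times being legitimate exactly as in Lemma~\ref{lemma:transitivity} and in the construction of the processes $Z^j$. Since the two margins of $\Lambda_{x,x'}$ are $P_{m+1}(x,\cdot)$ and $P_{m+1}(x',\cdot)$, the processes $(Y_m)$ and $(Y'_m)$ are marginally Markov with the transitions $P_m$ and the prescribed starting points, so $Y_n\sim\LL(X_n\given X_\ell=x_\ell)$ and $Y'_n\sim\LL(X_n\given X_\ell=x'_\ell)$; thus $(Y_n,Y'_n)$ is a coupling of the two laws on the left-hand side. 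The key point is that the well-ordered property propagates the coordinatewise ordering: by induction on $m$, for each $k$ the sign of $Y_m(k)-Y'_m(k)$ stays constant, equal to that of $x_\ell(k)-x'_\ell(k)$, so $\bigl(Y_n(k),Y'_n(k)\bigr)$ is an \emph{ordered} coupling of the one-dimensional laws. Applying Lemma~\ref{lemma:linear-Kanto} coordinate by coordinate (the absolute-value distance $\delta$ being linear on the totally ordered set $\RR$) gives $\EE[|Y_n(k)-Y'_n(k)|]=\delta'(\cdots)$, whence $\rho'\le\EE[\rho(Y_n,Y'_n)]=\sum_k a_k\,\delta'(\cdots)$. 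Combining the two inequalities proves the lemma.

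The main obstacle I anticipate is the bookkeeping in the upper-bound construction: one must check that the relatively independent chaining of the single-step well-ordered couplings keeps both $(Y_m)$ and $(Y'_m)$ Markov with the correct transition kernels, so that the marginals at time $n$ are the right conditional laws, while simultaneously preserving, coordinate by coordinate, the ordering fixed at the initial time $\ell$. Once this coupling is in hand, the passage to one-dimensional Kantorovich distances is immediate from Lemma~\ref{lemma:linear-Kanto} together with the strong-monotonicity identification of the coordinate marginals.
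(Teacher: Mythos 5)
Your proposal is correct and follows essentially the same route as the paper: the paper likewise chains the single-step well-ordered couplings of Definition~\ref{def:monotonic-multidim} (exactly as in the proof of Theorem~\ref{thm:monotonicmulti}) to produce processes whose time-$n$ pair is coordinatewise ordered, applies Lemma~\ref{lemma:linear-Kanto} coordinate by coordinate, obtains the reverse inequality from the fact that the infimum of a sum dominates the sum of the infima, and uses the Markovianity of each coordinate process with respect to $\FF$ (strong monotonicity) to replace conditioning on $X_\ell=x_\ell$ by conditioning on $X_\ell(k)=x_\ell(k)$. Your only imprecision is cosmetic: what the induction preserves is the weak coordinatewise ordering (a strict inequality may become an equality), not literally the sign of $Y_m(k)-Y'_m(k)$, but that is all the ordered-coupling argument requires.
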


\begin{proof}
Let $x_\ell$ and $x'_\ell$ be two points in $A_\ell$. As in the proof of Theorem~\ref{thm:monotonicmulti}, we can construct two processes ${(Z_n)}_{n\ge\ell}$ and ${(Z'_n)}_{n\ge\ell}$ such that
\begin{itemize}
  \item $\LL\left({(Z_n)}_{n\ge\ell}\right) = \LL\left({(X_n)}_{n\ge\ell}\given X_\ell=x_\ell\right)$,
  \item $\LL\left({(Z'_n)}_{n\ge\ell}\right) = \LL\left({(X_n)}_{n\ge\ell}\given X_\ell=x'_\ell\right)$,
  \item for each $n\ge\ell$, the coupling $(Z_n,Z'_n)$ is well-ordered with respect to $(x_\ell,x'_\ell)$.
\end{itemize}
By similar arguments as those used in~\eqref{eq:well-ordered-coupling}, relying on 
Lemma~\ref{lemma:linear-Kanto}, we get
\begin{align*}
  \rho'\left(\LL(X_n \given X_\ell=x_\ell), \LL(X_n \given X_\ell=x'_\ell)\right)  
  & = \rho'\left(\LL(Z_n), \LL (Z'_n)\right)  \\
  & = \sum_{k=1}^d 
a_k\delta'\left(\LL\bigl(Z_n(k)\bigr), \LL\bigl(Z'_n(k)\bigr)\right).
\end{align*}
But $\LL\bigl(Z_n(k)\bigr)=\LL\bigl(X_n(k) \given X_\ell=x_\ell\bigr)$, and since the process ${\left(X_n(k)\right)}_{n\le0}$ is Markovian with respect to the filtration $\FF$, the latter is also equal to $\LL\bigl(X_n(k) \given X_\ell(k)=x_\ell(k)\bigr)$. 
\end{proof}

\begin{ppsition}\label{ppsition:kantomulti}
  Let ${(\rho_n)}_{n\le 0}$ be the sequence of iterated Kantorovich pseudometrics associated to the Markov process  ${(X_n)}_{n \leq 0}$, initiated by $\rho$ on $A_0$. 
  Then for any $x_n,x'_n$ in $A_n$,
 $\rho_n(x_n,x'_n)$ is the Kantorovich distance between 
  $\LL(X_0 \given X_n=x_n)$ and  $\LL(X_0 \given X_n=x'_n)$ for every 
  $n \leq -1$, and it is given by 
$$\rho_n(x_n,x_n') = \sum_{k=1}^d a_k{\delta}_n\bigl(x_n(k),x'_n(k)\bigr)$$ 
where ${\delta}_n$ is the iterated Kantorovich pseudometric associated to 
the Markov process ${\bigl(X_n(k)\bigr)}_{n \leq 0}$, initiated by $\delta$. 
\end{ppsition}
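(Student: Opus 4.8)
The plan is to establish first the coordinate decomposition $\rho_n(x_n,x'_n)=\sum_{k=1}^d a_k\,\delta_n\bigl(x_n(k),x'_n(k)\bigr)$ by backward induction on $n$, and only afterwards to recognize the common value as the announced Kantorovich distance. The whole argument rests on the observation that each coordinate process ${\bigl(X_n(k)\bigr)}_{n\le0}$ is a monotonic \emph{real-valued} Markov process by Lemma~\ref{lemma:fullymonotonic}, so that Proposition~\ref{ppsition:KantoMarkov} applies to it, providing both that the iterated pseudometric $\delta_n$ is \emph{linear} on $A_n(k)\subset\RR$ and that it coincides with the Kantorovich distance between the conditional laws $\LL\bigl(X_0(k)\given X_n(k)=x_n(k)\bigr)$ and $\LL\bigl(X_0(k)\given X_n(k)=x'_n(k)\bigr)$.

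For the induction, the base case $n=0$ is immediate since $\rho_0=\rho$ and $\delta_0=\delta$. For the step, assume $\rho_{n+1}(y,y')=\sum_k a_k\delta_{n+1}\bigl(y(k),y'(k)\bigr)$ and recall that $\rho_n(x_n,x'_n)=(\rho_{n+1})'\bigl(\LL(X_{n+1}\given X_n=x_n),\LL(X_{n+1}\given X_n=x'_n)\bigr)$. For the lower bound, any coupling $(W,W')$ of these two one-step conditional laws satisfies $\EE[\rho_{n+1}(W,W')]=\sum_k a_k\,\EE\bigl[\delta_{n+1}(W(k),W'(k))\bigr]\ge\sum_k a_k\,(\delta_{n+1})'\bigl(\LL(W(k)),\LL(W'(k))\bigr)$, the coordinate marginals being fixed; since strong monotonicity (Definition~\ref{def:fullymonotonic}) yields $\LL(W(k))=\LL\bigl(X_{n+1}(k)\given X_n=x_n\bigr)=\LL\bigl(X_{n+1}(k)\given X_n(k)=x_n(k)\bigr)$, the recursive definition of $\delta_n$ bounds each summand below by $\delta_n\bigl(x_n(k),x'_n(k)\bigr)$. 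For the matching upper bound I use the well-ordered coupling $(Y,Y')$ of Definition~\ref{def:monotonic-multidim}: each coordinate pair $(Y(k),Y'(k))$ is an \emph{ordered} coupling of its marginals, so applying Lemma~\ref{lemma:linear-Kanto} to the linear pseudometric $\delta_{n+1}$ on the totally ordered set $\RR$ shows that all coordinate Kantorovich distances are attained at once, giving $\EE[\rho_{n+1}(Y,Y')]=\sum_k a_k\,\delta_n\bigl(x_n(k),x'_n(k)\bigr)$. As $(Y,Y')$ is an admissible coupling, $\rho_n(x_n,x'_n)\le\EE[\rho_{n+1}(Y,Y')]$, which matches the lower bound and proves the decomposition.

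It then remains to identify this value with the Kantorovich distance. By Proposition~\ref{ppsition:KantoMarkov} applied to the coordinate process, $\delta_n\bigl(x_n(k),x'_n(k)\bigr)=\delta'\bigl(\LL(X_0(k)\given X_n(k)=x_n(k)),\LL(X_0(k)\given X_n(k)=x'_n(k))\bigr)$; summing these with the weights $a_k$ and invoking Lemma~\ref{lemma:kanto_mordered} for the range from $n$ up to $0$ shows that $\sum_k a_k\delta_n(x_n(k),x'_n(k))$ equals $\rho'\bigl(\LL(X_0\given X_n=x_n),\LL(X_0\given X_n=x'_n)\bigr)$, which together with the decomposition establishes both assertions.

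The main obstacle is the achievability half of the inductive step: one must know that a \emph{single} well-ordered coupling attains the one-dimensional Kantorovich distance in \emph{every} coordinate simultaneously. This is precisely what the coordinate-wise linearity of $\delta_{n+1}$ (inherited from Proposition~\ref{ppsition:KantoMarkov}) combined with Lemma~\ref{lemma:linear-Kanto} provides, since linearity forces any ordered coupling to be optimal; without it one would be left only with the easy ``sum of minima'' lower bound and no reason for equality.
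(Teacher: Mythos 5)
Your proof is correct and takes essentially the same route as the paper: the decomposition $\rho_n(x_n,x'_n)=\sum_{k=1}^d a_k\delta_n\bigl(x_n(k),x'_n(k)\bigr)$ via well-ordered couplings, linearity of the coordinate pseudometrics through Lemma~\ref{lemma:linear-Kanto}, and the sum-of-minima lower bound is exactly the mechanism behind the paper's ``iterative use'' of Lemma~\ref{lemma:kanto_mordered}, and the final identification with the Kantorovich distance uses Proposition~\ref{ppsition:KantoMarkov} coordinatewise together with Lemma~\ref{lemma:kanto_mordered} for the range from $n$ to $0$, just as the paper does. Your explicit one-step backward induction merely spells out what the paper leaves implicit when it observes that the Kantorovich pseudometrics are linear and iterates Lemma~\ref{lemma:kanto_mordered}.
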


\begin{proof}
By Lemma~\ref{lemma:linear-Kanto}, the Kantorovich pseudometrics $\delta'$ in Lemma~\ref{lemma:kanto_mordered} are linear. 
Therefore, we can iteratively use Lemma~\ref{lemma:kanto_mordered} to get  the $n$-th iterated Kantorovich pseudometrics: For any $x_n,x'_n$ in $A_n$,
$$\rho_n(x_n,x_n') = \sum_{k=1}^d a_k{\delta}_n\bigl(x_n(k),x'_n(k)\bigr).$$

By Lemma~\ref{lemma:fullymonotonic}, each process ${\left(X_n(k)\right)}_{n\le0}$ is monotonic. Thus we can apply
Proposition \ref{ppsition:KantoMarkov} (the unidimensional case), which gives that ${\delta}_n(x_n(k),x_n'(k))$ is the 
Kantorovich pseudometric between $\LL\bigl(X_0(k) \given X_n(k)=x_n(k)\bigr)$ and 
 $\LL\bigl(X_0(k) \given X_n(k)=x'_n(k)\bigr)$. Then 
  $\rho_n(x_n,x_n')$ 
 is   the Kantorovich distance between 
  $\LL(X_0 \given X_n=x_n)$ and  $\LL(X_0 \given X_n=x'_n)$ 
  by  Lemma~\ref{lemma:kanto_mordered}. 
\end{proof}

\begin{thm}\label{thm:fullymonotonicmulti} 
Let ${(X_n)}_{n \leq 0}$ be an $\RR^d$-valued strongly monotonic Markov process. 
If it is identifiable, then the equivalent conditions of Theorem \ref{thm:monotonicmulti} 
are also equivalent to $\LL(X_0 \given \FF_{-\infty})=\LL(X_0)$.  
\end{thm}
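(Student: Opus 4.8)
The plan is to reduce the assertion to Vershik's criterion in the form of Lemma~\ref{lemma:VershikMarkov}, using the explicit computation of the iterated Kantorovich metrics provided by Proposition~\ref{ppsition:kantomulti}. One implication is immediate: if the equivalent conditions of Theorem~\ref{thm:monotonicmulti} hold, then in particular condition (b) holds, whose instance at $n=0$ is exactly $\LL(X_0 \given \FF_{-\infty}) = \LL(X_0)$. So the entire content lies in showing that $\LL(X_0 \given \FF_{-\infty}) = \LL(X_0)$ implies standardness (condition (c)).

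Assume $\LL(X_0 \given \FF_{-\infty}) = \LL(X_0)$, and work with $A_0 \subset [0,1]^d$ equipped with the linear distance $\rho$ and the iterated Kantorovich pseudometrics $\rho_n$ initiated by $\rho_0 = \rho$. First I would note that, since the process is identifiable, every $\rho_n$ is in fact a \emph{metric}: this follows by the same downward induction as in the one-dimensional case, because $(\rho_{n+1})'$ is a metric whenever $\rho_{n+1}$ is, and identifiability of $P_{n+1}$ forces $\rho_n(x_n,x'_n)=0$ only when $x_n=x'_n$. As $(A_0,\rho)$ is moreover a compact metric space, Lemma~\ref{lemma:VershikMarkov} applies and asserts that $\FF$ is standard if and only if $X_0$ satisfies the $V'$ property, namely $\EE\bigl[\rho_n(X'_n,X''_n)\bigr] \to 0$ as $n\to-\infty$, where $X'_n$ and $X''_n$ are two independent copies of $X_n$.

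The crux is then to deduce the $V'$ property from the hypothesis. By Proposition~\ref{ppsition:kantomulti}, for every $n\le-1$ the quantity $\rho_n(x_n,x'_n)$ equals the Kantorovich distance $\rho'\bigl(\LL(X_0 \given X_n=x_n),\LL(X_0 \given X_n=x'_n)\bigr)$; write $\psi_n(x):=\LL(X_0 \given X_n=x)$. By the Markov property $\LL(X_0\given\FF_n)=\psi_n(X_n)$, and Lemma~\ref{lemma:degenerate} applied to $X=X_0$ gives the $L^1$-convergence $\EE\bigl[\rho'\bigl(\psi_n(X_n),\LL(X_0\given\FF_{-\infty})\bigr)\bigr]\to0$, which under our hypothesis reads $\EE\bigl[\rho'\bigl(\psi_n(X_n),\LL(X_0)\bigr)\bigr]\to0$. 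For two independent copies $X'_n,X''_n$ of $X_n$, the triangle inequality for $\rho'$ then yields
\begin{align*}
\EE\bigl[\rho_n(X'_n, X''_n)\bigr]
&= \EE\bigl[\rho'\bigl(\psi_n(X'_n), \psi_n(X''_n)\bigr)\bigr] \\
&\le \EE\bigl[\rho'\bigl(\psi_n(X'_n), \LL(X_0)\bigr)\bigr] + \EE\bigl[\rho'\bigl(\LL(X_0), \psi_n(X''_n)\bigr)\bigr] \\
&= 2\, \EE\bigl[\rho'\bigl(\psi_n(X_n), \LL(X_0)\bigr)\bigr] \tend{n}{-\infty} 0,
\end{align*}
the last equality using that $X'_n$ and $X''_n$ are each distributed as $X_n$. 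Hence $X_0$ satisfies the $V'$ property, and Lemma~\ref{lemma:VershikMarkov} delivers standardness, closing the equivalence.

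The conceptual obstacle is that the $V'$ property concerns two \emph{independent} copies, whereas the hypothesis controls only the single conditional law $\LL(X_0\given\FF_n)$ against the fixed unconditional law $\LL(X_0)$. The decisive point is that Proposition~\ref{ppsition:kantomulti} converts the iterated metric $\rho_n$ into an honest Kantorovich distance between conditional laws — this is precisely where \emph{strong} monotonicity (via the well-ordered couplings and Lemma~\ref{lemma:kanto_mordered}) is indispensable, since for a general Markov process the iterated metric is strictly larger than this direct distance. Once that identification is in place, a single triangle inequality pivoting through $\LL(X_0)$ bridges the gap. I would also take care to confirm that identifiability is genuinely used, both to make each $\rho_n$ a metric and to legitimately invoke Lemma~\ref{lemma:VershikMarkov}.
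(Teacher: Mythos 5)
Your proof is correct and takes essentially the same approach as the paper: the paper's own proof is just the one-line citation of Proposition~\ref{ppsition:kantomulti}, Lemma~\ref{lemma:VershikMarkov} and Lemma~\ref{lemma:degenerate}, and your argument — identifying $\rho_n$ with the Kantorovich distance between $\LL(X_0 \given X_n = \cdot)$ and $\LL(X_0 \given X_n = \cdot')$, checking that identifiability makes the $\rho_n$ metrics, and deducing the $V'$ property from $\EE\bigl[\rho'\bigl(\LL(X_0\given\FF_n),\LL(X_0)\bigr)\bigr]\to 0$ via a triangle inequality pivoting through $\LL(X_0)$ — is precisely the expansion of that citation. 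The only quibble is cosmetic: in general the iterated metric merely dominates (rather than is always strictly larger than) the direct Kantorovich distance between the conditional laws.
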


\begin{proof}
This is a consequence of Proposition \ref{ppsition:kantomulti}, 
Lemma \ref{lemma:VershikMarkov}, and Lemma \ref{lemma:degenerate}. 
\end{proof}

\section{Standardness of adic filtrations}\label{sec:adic}

Standardness of adic filtrations associated to Bratteli graphs 
has become an important topic since the recent discoveries of 
Vershik~\cite{Ver13,VerIntrinsic}. 
As we will explain in Section~\ref{sec:centralwalks}, 
these are the filtrations induced by ergodic central measures on the path 
space of a Bratteli graph.  

We will apply Theorem~\ref{thm:monotonic} to derive standardness of some well-known examples 
of adic filtrations, namely those corresponding to the Pascal and the Euler graphs (Sections~\ref{sec:Pascal} and~\ref{sec:Euler}).

Actually, as we will see, it is straightforward from our Theorem~\ref{thm:monotonic} that 
every ergodic central probability measure on the one-dimensional Pascal  graph 
induces  
a standard filtration (by {\it (e) $\implies$ (a)}). 
But Theorem~\ref{thm:monotonic} is also practical to check 
the ergodicity of the random walk (using {\it(d)} or {\it 2)}). 
For the Euler graph we cannot directly apply  Theorem~\ref{thm:monotonic} 
because of multiple edges. 
Lemma~\ref{lemma:multipleedges} will allow us to deal with this situation. 

In Section~\ref{sec:mpascal} we will  apply Theorem~\ref{thm:monotonicmulti} to 
get standardness of the adic filtrations corresponding to 
the multidimensional Pascal graph. 

\subsection{Adic filtrations and other filtrations on Bratteli graphs}\label{sec:centralwalks}

 Some examples of Bratteli graphs are shown in Figure~\ref{fig:bratelli}. 
 Usually Bratteli graphs are graded by the nonnegative integers $\N$ but for our 
 purpose it is more convenient to consider the nonpositive integers $-\N$ as 
 the index set of the levels of the graphs. 
 Thus, the set of vertices $\Vb$ and the set of edges $\Eb$ of a 
 Bratteli graph $B = (\Vb,\Eb)$ have the form 
$\Vb= \cup_{n \leq 0} \Vb_n$ and  $\Eb= \cup_{n \leq 0} \Eb_n$ 
where $\Vb_n$ denotes the set of vertices at level $n$ and $\Eb_n$ denotes 
the set of edges connecting  levels $n-1$ and $n$. 
The $0$-th level set of vertices $\Vb_0=\{v_0\}$ actually consists of a single 
vertex $v_0$. Each vertex of level $n$ is assumed to be connected to at least one vertex at level $n-1$ and, if $n<-1$, to at least one vertex at level $n+1$. 

 \begin{figure}[htp]
   \centering
   \begin{subfigure}{0.43\textwidth}
   \centering
   	\includegraphics[scale=0.5]{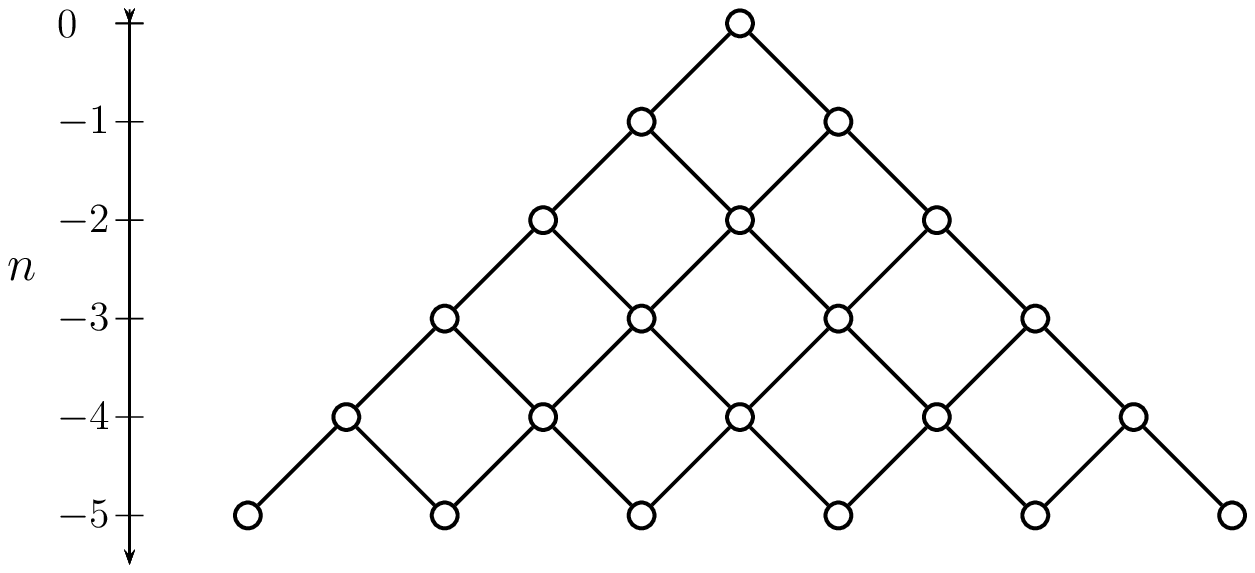}
 		\caption{Pascal}\label{fig:pascal}
    \end{subfigure}              
   \qquad
    \begin{subfigure}{0.43\textwidth}
    \centering
   	\includegraphics[scale=0.5]{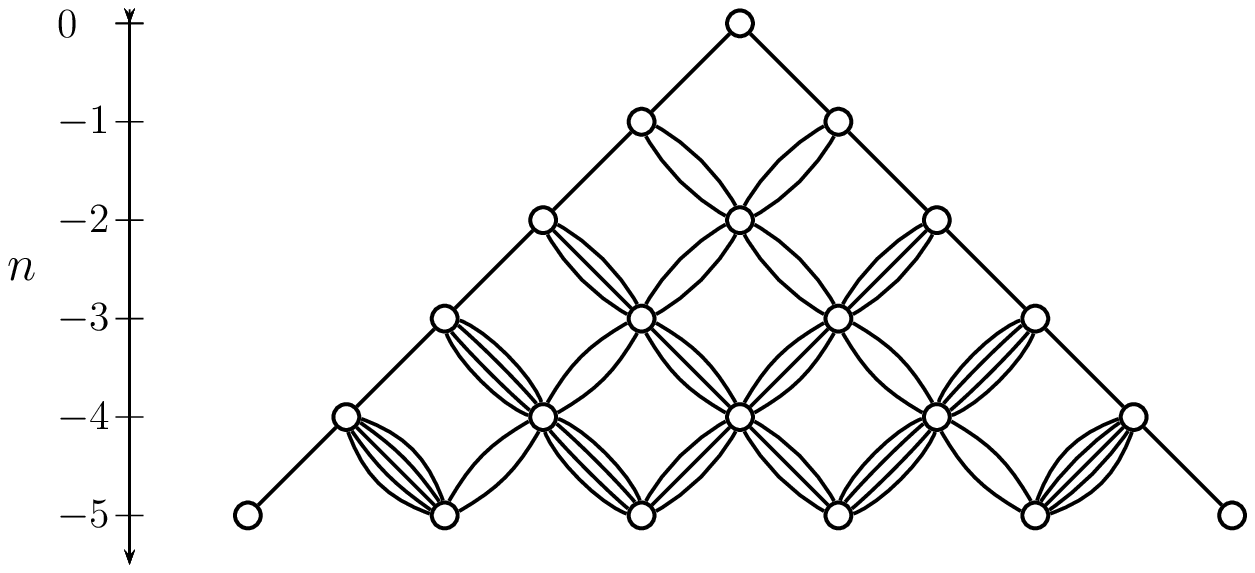}
 		\caption{Euler}\label{fig:euler}
 	\end{subfigure}
 
    \begin{subfigure}{0.43\textwidth}
    \centering
   	\includegraphics[scale=0.45]{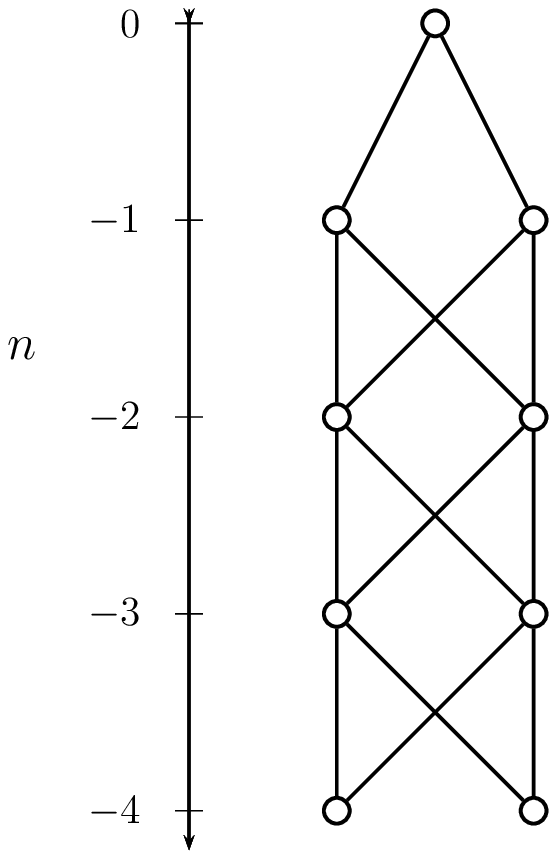}
 		\caption{Odometer}\label{fig:odometer}
   \end{subfigure}       
   \qquad
     \begin{subfigure}{0.43\textwidth}
     \centering
 		\includegraphics[scale=0.45]{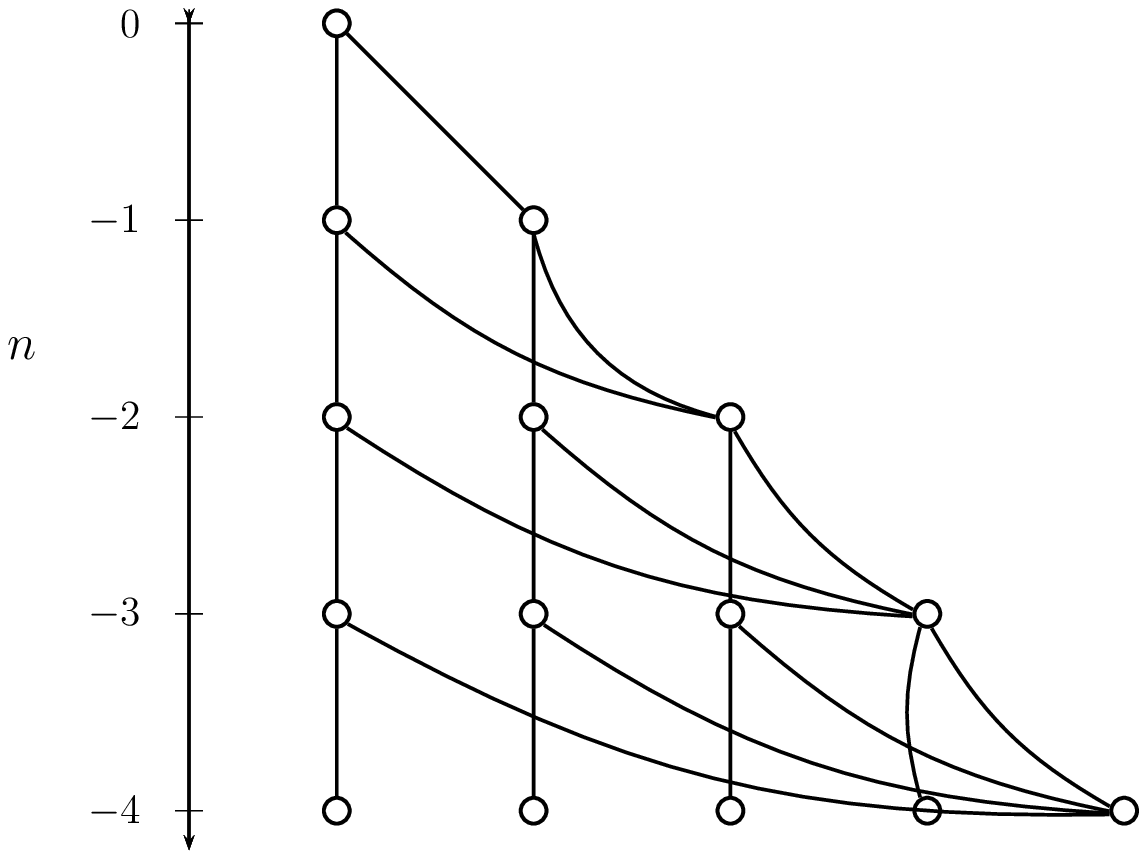}
 		\caption{Next-jump graph}\label{fig:bratelli_NJ}
    \end{subfigure}
   \caption{Four Bratteli graphs}
   \label{fig:bratelli}
 \end{figure}

There can also exist multiple edges connecting two vertices (see Euler graph).
For every vertex $v \in \Vb_n$, $n<0$, we put  \emph{labels} on the 
set of edges connecting $v$ to level $n+1$ (see Figure~\ref{fig:Euler_step}). 

 \begin{figure}[!h]
 \centering
 \scalebox{0.7}{
 	\includegraphics{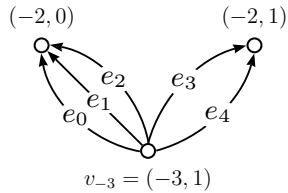}
 }
 \caption{Labeling edges in the Euler graph}
 \label{fig:Euler_step}
 \end{figure}

We denote by $\Gamma_B$ the set of infinite paths, where, 
as usual, an infinite path is a sequence 
$\gamma = {(\gamma_n)}_{n \leq 0} \in \prod_{n \leq 0} \Eb_n$ of connected edges 
starting at $v_0$,   and passing through exactly one vertex at each level $n \leq -1$.  
The path space $\Gamma_B$ has a natural Borel structure and 
any probability $\mu$ on $\Gamma_B$ can be interpreted as the law of 
a random path ${(G_n)}_{n \leq 0}$.  
The filtration $\GG$ generated by ${(G_n)}_{n \leq 0}$  is also
the filtration generated by the stochastic process ${(V_n, \eps_n)}_{n \leq 0}$   
where $V_n$ is the vertex at level $n$ of the 
random path ${(G_n)}_{n \leq 0}$ and $\eps_n$ is the 
\emph{label} of the edge connecting the vertices $V_{n-1}$ and $V_n$. 
When the graph has no multiple edges then $\GG$ is also the filtration generated by the random walk on the vertices 
 ${(V_n)}_{n \leq 0}$.   

By Rokhlin's correspondence (see \cite{Coud}), and up to measure algebra isomorphism, 
 the filtration $\GG$ corresponds to the increasing sequence of 
measurable partitions ${(\xi_n)}_{n \leq 0}$ on $(\Gamma_B,\mu)$, where 
$\xi_n$ is the measurable partition of $\Gamma_B$ into the equivalence classes 
of the equivalence relation 
${\cal R}_n$ defined by $\gamma {\cal R}_n \gamma'$ 
if $\gamma_m = \gamma'_m$ for all $m \leq n$. 
The probabilistic definition of centrality of the probability measure $\mu$, 
given below,  amounts to say that 
$\mu$ is invariant for the 
 \emph{tail equivalence relation}  
 ${\cal R}_{-\infty}$ defined by  $\gamma {\cal R}_{-\infty} \gamma'$ 
if $\gamma_m = \gamma'_m$ for  $|m|$ large enough. 

\begin{definition}\label{def:central} 
The probability measure $\mu$ on $\Gamma_B$ is \emph{central} if for each $n<0$,
the conditional distribution of $(G_{n+1}, \ldots, G_0)$ given $\GG_n$ is  
 uniform  on the set of paths connecting the vertex $V_n$ to the root of the graph. 
\end{definition}

The probabilistic property of $\GG$ corresponding to 
ergodicity of this tail equivalence relation with respect to $\mu$ is the 
degeneracy of the tail $\sigma$-field:

\begin{definition}\label{def:ergodic} 
The probability measure $\mu$ on $\Gamma_B$ is \emph{ergodic} if 
$\GG$ is Kolmogorovian. 
\end{definition}


When $\mu$ is central then the process ${(V_n, \eps_n)}_{n \leq 0}$ 
 as well as the random walk on the vertices   
 ${(V_n)}_{n \leq 0}$ are Markovian. 
  More precisely, 
  ${(V_n)}_{n \leq 0}$ is  Markovian with respect to 
 the filtration $\GG$ generated by  ${(V_n, \eps_n)}_{n \leq 0}$;   
 in other words, the filtration $\FF$ generated by ${(V_n)}_{n \leq 0}$ 
 is immersed in $\GG$. 
 Furthermore the conditional distribution of $V_{n+1}$ given  $V_n=v_n$ 
 is given by 
\begin{equation}\label{eq:central}
\Pr(V_{n+1}=v_{n+1} \given V_n=v_n) = 
m(v_n,v_{n+1})\frac{\dim(v_{n+1})}{\dim(v_n)}
\end{equation} 
where $m(v_n,v_{n+1})$ is the number of edges connecting 
$v_n$ and $v_{n+1}$, and $\dim(v)$ denotes the number of paths 
from vertex $v$ to the final vertex $v_0$. 

Centrality and ergodicity of $\mu$ also correspond to 
invariance and ergodicity of the so-called \emph{adic transformation} 
$T$ on $\Gamma_B$, and in this case the tail equivalence relation 
${\cal R}_{-\infty}$ defines the partition of $\Gamma_B$ into the 
orbits of the adic transformation. 
Standardness of $\GG$ is stronger  than ergodicity of $\mu$, 
but note that standardness of $\GG$ under a central 
ergodic measure $\mu$ is not a priori a property about the corresponding 
adic transformation, since 
the adic transformation on a Bratteli graph is possibly isomorphic to 
the adic transformation on another Bratteli graph, and these two different Bratteli 
graphs can generate non-isomorphic filtrations. 
For example the dyadic odometer is isomorphic to 
an adic transformation on the graph shown on Figure~\ref{fig:odometer} as well as 
 an adic transformation on the graph shown on Figure~\ref{fig:bratelli_NJ}.  
 The usual adic 
 representation of the dyadic odometer is given by the graph shown in 
  Figure~\ref{fig:odometer}. One easily sees that there is a unique 
  central probability  measure,  and that the corresponding Markov process  ${(V_n)}_{n \leq 0}$ 
  is actually a sequence of i.i.d.\ random variables having 
  the  uniform distribution on $\{0,1\}$.
 Therefore $\GG$ is obviously a standard filtration. 
 The Bratteli graph of Figure~\ref{fig:bratelli_NJ} 
shows another possible adic representation 
of  the dyadic odometer. 
Standardness of the corresponding filtration $\GG$ has been studied 
in~\cite{LauNJ} and~\cite{LauEntropy} in the case when $\mu$ is any 
 independent product of Bernoulli measures on the path space, 
 and this includes all the central ergodic measures.  
 In Sections~\ref{sec:Pascal} and~\ref{sec:Euler} 
 we will use Theorem~\ref{thm:monotonic} to study the case of the Pascal graph 
 (Figure~\ref{fig:pascal}) and the case of the Euler graph (Figure~\ref{fig:euler}). 

\medskip 
The lemma below is useful to establish standardness in 
 the case of a graph with multiple edges, 
such as the Euler graph. 
 Note that the conditional independence assumption 
 $\LL(\eps_n \given V_{n-1})= \LL(\eps_n \given \GG_{n-1})$ 
 of this lemma implies that ${(V_n)}_{n \leq 0}$ is Markovian, and 
 this assumption is always fulfilled for a central measure. 
 
\begin{lemma}\label{lemma:multipleedges}
Let $\GG$ be the filtration associated to a probability measure on 
the path space of a Bratteli graph,  
and denote by ${(V_n, \eps_n)}_{n \leq 0}$ the stochastic process generating $\GG$,  
where $V_n$ is the vertex at level $n$ and $\eps_n$ is the label of the edge 
connecting $V_{n-1}$ to $V_n$. 
Assume that $\LL(\eps_n \given V_{n-1})= \LL(\eps_n \given \GG_{n-1})$, 
that is to say $\eps_n$ is conditionally independent of $\GG_{n-1}$ 
given $V_{n-1}$. 
Denote by $\FF$ the filtration of the random walk ${(V_n)}_{n \leq 0}$ on the 
vertices. 

Then
\begin{enumerate}[1)]
\item there exists a parameterization ${(U_n)}_{n \leq 0}$   of 
$\FF$ which is also a parameterization of $\GG$, and 
such that the parametric extension 
of $\FF$ with   ${(U_n)}_{n \leq 0}$ (Definition~\ref{def:extension_superinnovation}) 
is also  the parametric extension 
of $\GG$ with   ${(U_n)}_{n \leq 0}$; 

\item assuming $\Vb_n \subset \RR$ and ${(V_n)}_{n \leq 0}$  monotonic, 
there exists a monotonic parametric representation  ${(f_n, U_n)}_{n \leq 0}$  of 
${(V_n)}_{n \leq 0}$ 
with a parameterization ${(U_n)}_{n \leq 0}$  satisfying the above properties.
\end{enumerate}
\end{lemma}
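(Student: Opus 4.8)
The plan is to exploit the two structural facts hidden in the hypothesis. First, since the labeling identifies each edge leaving $V_{n-1}$ by its label, the knowledge of $V_{n-1}$ and $\eps_n$ determines the edge and hence its endpoint, so that $V_n=\pi(V_{n-1},\eps_n)$ for a measurable map $\pi$. Second, the conditional law of $(V_n,\eps_n)$ given $\GG_{n-1}$ depends only on $V_{n-1}$: combining $V_n=\pi(V_{n-1},\eps_n)$ with the assumption $\LL(\eps_n\given\GG_{n-1})=\LL(\eps_n\given V_{n-1})$ shows that $\LL\bigl((V_n,\eps_n)\given\GG_{n-1}\bigr)=K_n(V_{n-1},\cdot)$, where $K_n(v,\cdot)$ is the law of $(\pi(v,\eps),\eps)$ for $\eps\sim\LL(\eps_n\given V_{n-1}=v)$. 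In particular $(V_n,\eps_n)$ is Markov, with a transition kernel depending on the past only through the vertex $V_{n-1}$.

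For part 1, I would choose for each $n$ an updating function $F_n(v,u)=(V_n,\eps_n)$ of the kernel $K_n$ depending only on $v$ (possible precisely because $K_n(v,\cdot)$ does not involve $\eps_{n-1}$), and then, as in Section~\ref{sec:PWcoupling}, realize it by a parameterization: pass, up to isomorphism, to an i.i.d.\ sequence of uniform variables $(U_n)$ with $U_n$ independent of $\GG_{n-1}\vee\sigma(U_m;m\le n-1)$ and $(V_n,\eps_n)=F_n(V_{n-1},U_n)$. Since $\GG_n=\GG_{n-1}\vee\sigma(V_n,\eps_n)\subseteq\GG_{n-1}\vee\sigma(U_n)$, this $(U_n)$ is a parameterization of $\GG$. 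It is also one of $\FF$: independence of $U_n$ from $\FF_{n-1}\vee\sigma(U_m;m<n)$ follows from $\FF\subseteq\GG$, and $V_n$, being the first coordinate of $F_n(V_{n-1},U_n)$, lies in $\sigma(V_{n-1},U_n)\subseteq\FF_{n-1}\vee\sigma(U_n)$. Finally each label satisfies $\eps_m\in\sigma(V_{m-1},U_m)\subseteq\FF_m\vee\UU_m$, whence $\GG_n\subseteq\FF_n\vee\UU_n$; together with the trivial reverse inclusion this gives $\FF\vee\UU=\GG\vee\UU$, so the two parametric extensions coincide (both being well defined by Lemma~\ref{lemma:extension_superinnovation}).

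For part 2, with $\Vb_n\subset\RR$ and $(V_n)$ monotonic, I would refine the updating function so that its vertex component becomes increasing. Fix a measurable bijection sending the uniform $U_n$ to an independent pair $(U_n^{(1)},U_n^{(2)})$ of uniform variables. Take $V_n=f_n(V_{n-1},U_n^{(1)})$ to be the quantile updating function~\eqref{eq:quantile} of the vertex kernel $P_n^V(v,\cdot)\egdef\LL(V_n\given V_{n-1}=v)$, which is increasing in $v$ by monotonicity, and take $\eps_n=g_n(V_{n-1},V_n,U_n^{(2)})$ to be an updating function of the (countably supported) conditional label law $\LL(\eps_n\given V_{n-1},V_n)$. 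The resulting $F_n(V_{n-1},U_n)=(V_n,\eps_n)$ reproduces $K_n(V_{n-1},\cdot)$, its vertex part $f_n$ is an increasing updating function of $(V_n)$, and $\eps_n$ remains a function of $(V_{n-1},U_n)$. Hence $(U_n)$ is a monotonic parametric representation of $(V_n)$ sharing all the properties of part 1, and the verification of that part applies verbatim.

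The main obstacle is exactly this simultaneous constraint in part 2: the single uniform $U_n$ must at once drive the vertex transition through an \emph{increasing} updating function and carry enough extra randomness to recover the label $\eps_n$ among the (possibly multiple) edges joining $V_{n-1}$ to $V_n$. Splitting $U_n$ into an increasing-quantile part and an independent label-generating part resolves it, but one must check that the disintegration $\LL(\eps_n\given V_{n-1},V_n)$ and its updating function $g_n$ are measurable in $(V_{n-1},V_n)$, and that the recombined kernel indeed equals $K_n$. The remaining points—independence of $U_n$ from $\GG_{n-1}$ and from the past parameters, and the inclusions defining a parameterization—are routine.
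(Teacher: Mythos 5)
Your proof is correct, and while your part 1 matches the paper's argument in spirit (a single uniform $U_n$ drives a joint updating function of $(V_n,\eps_n)$, yielding simultaneously a parameterization of $\FF$ and of $\GG$ whose parametric extensions coincide), your part 2 takes a genuinely different route. The paper generates the \emph{label} first: it sets $\eps'_n=h_n(V'_{n-1},U'_n)$, where $h_n(v,\cdot)$ is the quantile function of $\LL(\eps_n\given V_{n-1}=v)$, and recovers the vertex deterministically as $V'_n=\phi_n(V'_{n-1},\eps'_n)$; monotonicity is then obtained not by splitting the uniform but by \emph{relabeling}: since the edges issuing from a vertex form a discrete set, one may order the labels so that $\phi_n(v,\cdot)$ is right-continuous increasing, and the composition $f_n(v,\cdot)=\phi_n\bigl(v,h_n(v,\cdot)\bigr)$ is then exactly the quantile updating function~\eqref{eq:quantile} of the vertex kernel, hence increasing whenever ${(V_n)}_{n\le0}$ is monotonic. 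You instead generate the vertex first through its own quantile function and the label second from the disintegration $\LL(\eps_n\given V_{n-1},V_n)$, paying for this with the measurable bijection $U_n\mapsto\bigl(U_n^{(1)},U_n^{(2)}\bigr)$ and the existence and measurability of that disintegration and of its updating function $g_n$ --- all standard on standard Borel spaces, so the points you flag as needing verification are genuinely routine. What each approach buys: the paper's version needs no disintegration and no splitting, and exhibits $f_n$ as literally the quantile function of the vertex kernel, but it uses the freedom to reorder labels (harmless here, yet a real ingredient); your version is insensitive to how the edges are labeled and would survive in settings where no monotone relabeling of edges is available, at the cost of a slightly heavier construction. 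The verification of the inclusions $\GG_n\subset\GG_{n-1}\vee\sigma(U_n)$, $\eps_m\in\sigma(V_{m-1},U_m)$, and of the equality $\FF\vee\UU=\GG\vee\UU$ is essentially identical in both proofs.
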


\begin{proof}
Assume without loss of generality that the labels of the edges are real numbers. 
Denote by $\phi_n$ a measurable function such that $V_n = \phi_n(V_{n-1}, \eps_n)$, 
and denote by  $h_n(v_{n-1}, \cdot)$ the 
right-continuous inverse of the cumulative distribution function of the conditional law 
$\LL(\eps_n \given V_{n-1}=v_n)$.  
Then the function $f_n$ defined by 
$$f_n(v_{n-1}, \cdot) = \phi_n\bigl(v_{n-1},h_n(v_{n-1}, \cdot)\bigr)$$ is 
an updating function of the Markov kernel $\PP(V_n \in \cdot \given V_{n-1}=v_{n-1})$. 

Consider a copy ${(V'_n)}_{n \leq 0}$ of the process ${(V_n)}_{n \leq 0}$ 
given by a parametric representation ${(f_n, U'_n)}_{n \leq 0}$ 
 with these updating functions $f_n$, and 
set $\eps'_n = h_n(V'_{n-1}, U'_n)$. 
Then it is not difficult to see that the 
process ${(V'_n,\eps'_n)}_{n \leq 0}$ is a copy of ${(V_n,\eps_n)}_{n \leq 0}$. 
Moreover, denoting by $\GG'$ its filtration, $U'_n$ 
is independent of $\GG'_{n-1}$, and  
$$\GG'_n = \GG'_{n-1} \vee \sigma(\eps'_n) \subset \GG'_{n-1} \vee \sigma(U'_n),$$ 
thereby showing that ${(U'_n)}_{n \leq 0}$ 
is  a parameterization of $\GG'$. 
This proves 1). 

Assuming now $\Vb_n \subset \RR$, it is always possible to take right-continuous increasing  functions 
$\phi_n(v_{n-1}, \cdot)$. 
With such a choice, the function $f_n$ constructed above is the quantile updating function 
\eqref{eq:quantile}, and then the representation is monotonic 
whenever ${(V_n)}_{n \leq 0}$ is monotonic. 
\end{proof}

We cannot deduce from result 1) of Lemma~\ref{lemma:multipleedges} that $\GG$ admits 
a generating parameterization whenever $\FF$ admits a generating parameterization. 
But thanks to this result and to Proposition~6.1 in~\cite{LauTeoriya}, which says that 
standardness is hereditary under parametric 
extension, we know that 
\emph{$\FF$ is standard if and only if $\GG$ is standard}.  
This result is not used in the present paper but it is useful for 
the study of other Bratteli graphs.  
  
\subsection{Vershik's intrinsic metrics}\label{sec:intrinsic}

Given a probability measure $\mu$ on $\Gamma_B$, for which the process ${(V_n)}_{n\le0}$ is Markovian, 
we can consider the iterated Kantorovich pseudometrics $\rho_n$ defined as in Section~\ref{sec:vershik}.
 But since $\Vb_0$  is always reduced to a singleton, we start from a metric $\rho_{-1}$ 
 defined on the set $\Vb_{-1}$ instead of a metric $\rho_0$ on $\Vb_0$.  
 Each pseudometric $\rho_n$, $n\le -1$ is then defined on the set $\Vb_n$ of vertices of level $n$.  These pseudometrics only depend on the Markov kernels $P_n$, in particular all central probability measures will give rise to the same sequence of pseudometrics. The pseudometrics $\rho_n$ obtained in the  case of a central measure have been introduced by Vershik in~\cite{VerIntrinsic}, who called them \emph{intrinsic pseudometrics}. 
 In the next sections we will provide the intrinsic metrics $\rho_n$ 
 for the Pascal graph and the Euler graph with the help 
 of  Proposition~\ref{ppsition:KantoMarkov}, and for the higher dimensional Pascal graph 
 with the help of Proposition~\ref{ppsition:kantomulti}. 
 
 Applying the theorems of~\cite{VerIntrinsic} about the identification of 
 the ergodic central measures is  beyond the scope of this paper.  
 This is based on the intrinsic pseudometric defined on the whole set of vertices 
 $\cup_{n \leq 0} \Vb_n$ and extending all the $\rho_n$, 
 which we will not explicit here. 
 Our derivation of the $\rho_n$ provides a helpful starting point 
 for further work in this direction.

Recall that the $\rho_n$ are metrics  under  the identifiability 
of the associated Markov process ${(V_n)}_{n\le0}$ 
(Definition~\ref{def:identifiable}), and identifiability is easy to check 
in the case of central measures. 
It is equivalent to the following property: 
	\emph{For each $n<-1$, for any two different vertices $v,v'\in\Vb_n$, 
	there exists at least one vertex $w\in\Vb_{n+1}$ such that the number 
	of edges connecting $v$ and $w$ is different from  the number of edges 
	connecting $v'$ and $w$}. 
For a graph without multiple edge, this simply means that 
$v$ and $v'$ are not connected to the same set of vertices at level $n+1$. 


\section{Pascal filtration}\label{sec:Pascal}

Consider the $(-\N)$-graded Pascal graph shown in Figure~\ref{fig:Pascal1}. 
At each level $n$, we label the vertices $0$, $1$, $\ldots$, $|n|$. 
Then a vertex can be identified by the pair $(n,k)$ consisting in its level 
$n$ and its label $k$, but when the level is understood we simply use the label 
as the identifier. 
Each vertex $v$ at level $n$ is connected to vertices $v$ and $v+1$ at level $n-1$.
There is no multiple edge and a random path in the graph corresponds to a random walk ${(V_n)}_{n \leq 0}$ 
on the vertices of the graph, 
where $V_n$ is a vertex at level $n$ and $(V_{n},V_{n-1})$ are connected. 

 \begin{figure}[!h]
 \centering
     \begin{subfigure}{0.43\textwidth}
        \centering
        \includegraphics[scale=0.6]{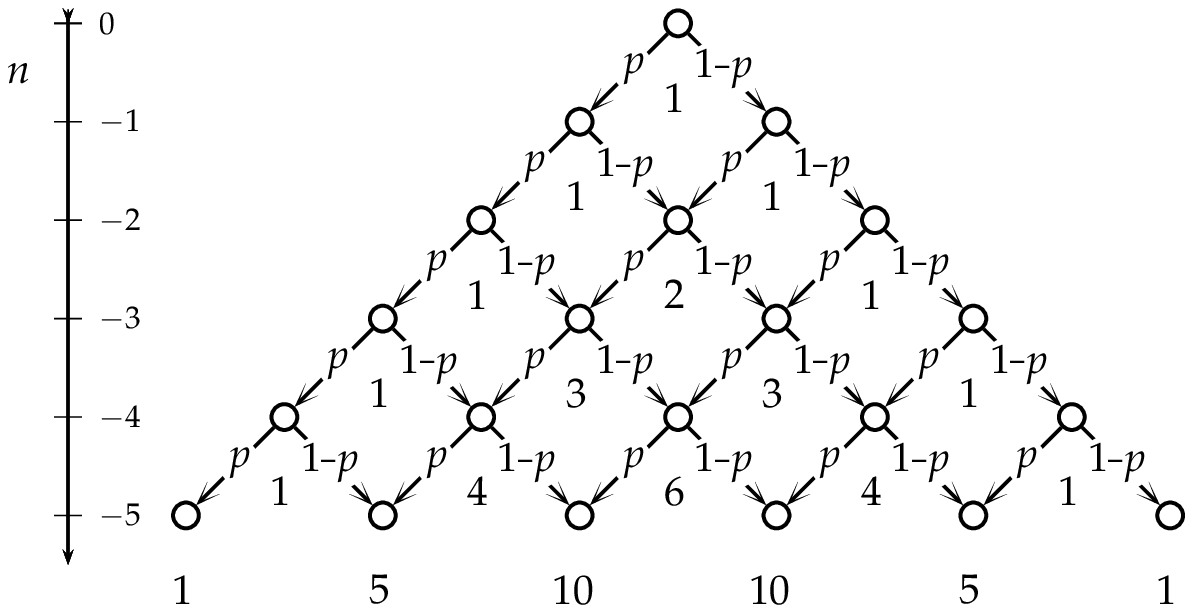}
        \caption{Pascal walk from $n=0$ to $n=-\infty$}\label{fig:Pascal1}
     \end{subfigure}          
     \qquad  
     \begin{subfigure}{0.43\textwidth}  
        \centering
        \includegraphics[scale=0.6]{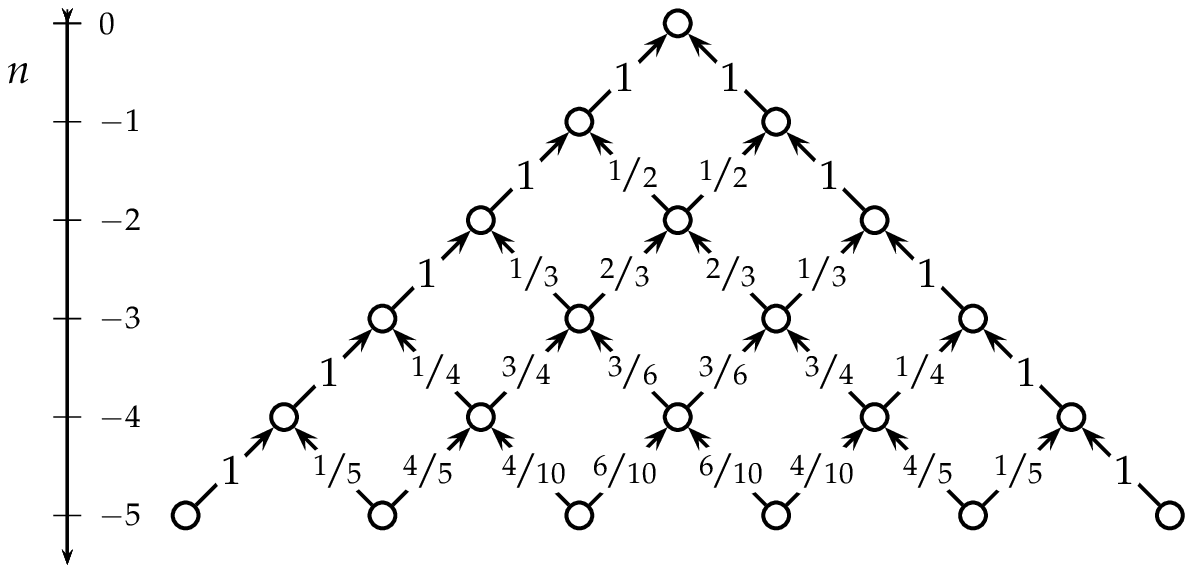}
        \caption{Pascal walk from $n=-\infty$ to $n=0$}\label{fig:Pascal2}
     \end{subfigure} 
 \caption{Pascal random walk}
 \label{fig:Pascal}
 \end{figure}

The path space of the Pascal graph is naturally identified with ${\{0,1\}}^{-\N}$. 
Under any central probability measure, the process ${(V_n)}_{n \leq 0}$ 
obviously is a monotonic and identifiable Markov process 
(definitions~\ref{def:monotonic} and~\ref{def:identifiable}). 
Its Markovian transition distributions ${\cal L}(V_n \given V_{n-1}=v)$ 
are easy to derive with the help of formula \eqref{eq:central}. 
They are shown 
in Figure~\ref{fig:Pascal2} for $n=0$ to $n=-4$. 
The only thing we will need is the conditional law 
${\cal L}(V_{-1} \given V_{n}=v_n)$ and it is not difficult to see that 
it is the distribution on $\{0,1\}$ given by 
$\PP(V_{-1}=1 \given V_n=v_n)=\frac{v_n}{|n|}$. 

\subsection{Standardness}\label{sec:Pascal_standard}

It has been shown (see e.g.~\cite{MP}) that the ergodic central probability measures are 
those for which the reverse random walk $(V_0, V_{-1}, \ldots)$ is Markovian 
with a constant Markovian transition $(p, 1-p)$ as shown in Figure~\ref{fig:Pascal1}. 
In other words the ergodic central probability measures are the infinite product 
Bernoulli measures $(p, 1-p)$. Then $V_n$ has the binomial distribution $\Bin(|n|,p)$.

Using Theorem~\ref{thm:monotonic}, we can directly show standardness of the filtration
$\FF$ generated by $(V_n)$ under these  infinite product 
Bernoulli measures.

\begin{ppsition}\label{ppsition:pascalstandard}
When $\mu$ is an  infinite product 
Bernoulli measure $(p, 1-p)$ then the random walk ${(V_n)}_{n \leq 0}$ is a 
monotonic Markov process generating a standard filtration.  In particular, this measure is ergodic.
\end{ppsition}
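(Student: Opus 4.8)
The plan is to invoke part~2) of Theorem~\ref{thm:monotonic}, since under that route the only conditional law one needs is precisely the $\LL(V_{-1}\given V_n=v_n)$ recorded above. The one preliminary point is that $\Vb_0$ is a singleton, so $V_0$ is almost surely constant and the process fails to be identifiable at level~$0$. I would get around this by noting that $\FF$ is also generated by ${(V_n)}_{n\le-1}$ and performing the harmless reindexing $W_n\egdef V_{n-1}$, which produces a monotonic Markov process ${(W_n)}_{n\le0}$ generating $\FF$, with top variable $W_0=V_{-1}$. This reindexed process is identifiable at every level: the transition $W_{-1}=V_{-2}\to W_0=V_{-1}$ sends the states $0,1,2$ to the mutually distinct laws $\delta_0$, $\tfrac12(\delta_0+\delta_1)$, $\delta_1$, while the deeper transitions are identifiable as already observed for the Pascal walk. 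Hence part~2) of Theorem~\ref{thm:monotonic} applies and reduces standardness of $\FF$ to the single almost-sure equality $\LL(V_{-1}\given\FF_{-\infty})=\LL(V_{-1})$.

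To establish that equality I would use a law of large numbers. For the product Bernoulli$(p)$ measure the successive increments of the reverse walk $V_0,V_{-1},V_{-2},\dots$ are i.i.d.\ Bernoulli$(p)$, so $V_n$ is a sum of $|n|$ such variables and $V_n/|n|\to p$ almost surely by the strong law. By the Markov property $\LL(V_{-1}\given\FF_n)=\LL(V_{-1}\given V_n)$, which by the formula $\PP(V_{-1}=1\given V_n=v_n)=v_n/|n|$ is the Bernoulli law with parameter $V_n/|n|$; letting $n\to-\infty$ gives $\LL(V_{-1}\given\FF_n)\to\text{Bernoulli}(p)$ almost surely. Lemma~\ref{lemma:degenerate} identifies this limit with $\LL(V_{-1}\given\FF_{-\infty})$, and since $\LL(V_{-1})=\text{Bernoulli}(p)$ as well, the required equality holds and $\FF$ is standard.

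Ergodicity then follows at no extra cost: standardness is condition~\textit{(a)} of Theorem~\ref{thm:monotonic}, which entails condition~\textit{(e)}, namely that $\FF$ is Kolmogorovian, i.e.\ that $\mu$ is ergodic in the sense of Definition~\ref{def:ergodic}. If one preferred to avoid the reindexing, the same law-of-large-numbers computation verifies condition~\textit{(d)} of part~1) directly: for each $k\le-1$ the law $\LL(V_k\given V_n)$ is hypergeometric and converges, as $V_n/|n|\to p$, to $\Bin(|k|,p)=\LL(V_k)$, whence \textit{(d)}$\implies$\textit{(a)}.

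The main obstacle is organisational rather than substantive: one must treat the degenerate top level $V_0$ carefully so that the identifiable version of the criterion can legitimately be applied, and then recognise that the crux of the whole argument is simply the deterministic limit $V_n/|n|\to p$ furnished by the strong law. It is exactly this determinism --- special to the product measures --- that makes $\LL(V_{-1}\given\FF_{-\infty})$ degenerate; for a non-ergodic central measure the corresponding limit would be random, and the criterion would (correctly) fail.
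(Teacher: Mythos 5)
Your proposal is correct and follows essentially the same route as the paper: the paper likewise applies criterion \textit{2)} of Theorem~\ref{thm:monotonic} to the process restricted to times $n\le-1$ (your reindexing $W_n=V_{n-1}$ is the same device, handling the degenerate top level $\Vb_0$), identifies $\LL(V_{-1}\given V_n)$ as Bernoulli with parameter $V_n/|n|$, and concludes by the law of large numbers together with the convergence lemmas of Section~3 (the paper cites Lemma~\ref{lemma:convergence_conditional_law}, you cite Lemma~\ref{lemma:degenerate}; these are the same Tool~1 machinery). Your extra remarks --- the explicit identifiability check at the new top transition and the alternative verification of condition \textit{(d)} via hypergeometric laws --- are accurate but not needed beyond what the paper records.
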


\begin{proof}
Obviously, ${(V_n)}_{n \leq -1}$ is a 
monotonic and identifiable Markov process (see last paragraph in Section~\ref{sec:intrinsic}). 
We check criterion {\it 2)} in  Theorem~\ref{thm:monotonic}.  
The conditional distribution $\mu_{v_n}:=\LL(V_{-1} \given V_n=v_n)$ 
is the law on $\{0,1\}$ given by 
$\mu_{v_n}(1)= \frac{v_n}{|n|}$, thus the conditional law $\LL(V_{-1} \given \FF_n)$ 
 goes to $\LL(V_{-1})$ by the law of large numbers and then  Theorem~\ref{thm:monotonic} applies 
 in view of Lemma~\ref{lemma:convergence_conditional_law}. 
\end{proof}

In fact, as long as the process ${(V_n)}_{n \leq 0}$ is a Markov process for some 
 probability measure on $\Gamma_B$, 
 it is easy to see that it is necessarily a monotonic Markov process. 
 We then get the following consequence of Theorem~\ref{thm:monotonic} (by {\it (e) $\implies$ (a)}).

\begin{thm}
  \label{thm:all_Markov_processes_in_Pascal}
  For any ergodic probability measure on $\Gamma_B$ under which ${(V_n)}_{n \leq 0}$  is a Markov process, the filtration $\FF$ generated by 
  ${(V_n)}_{n \leq 0}$  admits a generating parameterization, hence is standard.
\end{thm}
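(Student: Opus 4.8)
The plan is to obtain the conclusion as a direct application of Theorem~\ref{thm:monotonic}: I would check that its hypotheses hold and that ergodicity of the measure is exactly its Kolmogorovian condition \emph{(e)}, so that the equivalences in part~1) deliver both \emph{(b)} (a generating parameterization) and \emph{(a)} (standardness). Since the Pascal graph has no multiple edges, the filtration $\FF$ generated by the vertex walk ${(V_n)}_{n \leq 0}$ coincides with the path filtration $\GG$; hence ergodicity of the measure (Definition~\ref{def:ergodic}) says precisely that $\FF$ is Kolmogorovian. Moreover each $V_n$ takes integer label values in $\{0,1,\ldots,|n|\}\subset\RR$, so the process is $\RR$-valued as Theorem~\ref{thm:monotonic} demands.

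The one thing genuinely requiring verification is that ${(V_n)}_{n \leq 0}$ is monotonic, and here I would argue purely from the combinatorics of the graph, not from any feature of the measure. Each vertex $v$ at level $n$ is joined to $v$ and $v+1$ at level $n-1$; reading this from the side of level $n-1$, a vertex $v$ there reaches only $v$ and $v-1$ at level $n$. Thus, whatever the transition probabilities are, the kernel $P_n(v,\cdot)=\LL(V_n\given V_{n-1}=v)$ is supported on $\{v-1,v\}$. For integer labels $v<v'$ we have $v\le v'-1$, so every point in the support of $P_n(v,\cdot)$ is $\le v$ while every point in the support of $P_n(v',\cdot)$ is $\ge v'-1\ge v$; an independent coupling is therefore automatically ordered and witnesses $P_n(v,\cdot)\leqst P_n(v',\cdot)$ (for $v=v'$ the two laws coincide). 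Hence each $P_n$ is increasing and the process is monotonic.

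With monotonicity established, Theorem~\ref{thm:monotonic} applies and the implications \emph{(e)} $\Rightarrow$ \emph{(b)} and \emph{(e)} $\Rightarrow$ \emph{(a)} finish the proof. The step I would flag as the only real subtlety --- though it dissolves on inspection --- is that the theorem must cover an \emph{arbitrary} ergodic Markov measure rather than a central one, so I may not invoke formula~\eqref{eq:central} or the explicit binomial transitions of Section~\ref{sec:Pascal}. The monotonicity argument above is tailored precisely to this: it depends solely on the support structure $\{v-1,v\}$ forced by the edges, so it is insensitive to the actual transition law and applies uniformly to every Markov measure on $\Gamma_B$.
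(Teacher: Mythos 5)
Your proof is correct and takes essentially the same route as the paper: the paper's own argument consists exactly of noting that under \emph{any} Markov measure on the Pascal graph the vertex walk is necessarily monotonic (a fact it leaves as ``easy to see'') and then invoking Theorem~\ref{thm:monotonic} via \emph{(e)} $\implies$ \emph{(b)} and \emph{(a)}. Your support-based coupling argument (the kernel $P_n(v,\cdot)$ is carried by $\{v-1,v\}$, so any coupling of $P_n(v,\cdot)$ and $P_n(v',\cdot)$ with $v<v'$ is automatically ordered) simply supplies the omitted detail, and your identification of ergodicity with condition \emph{(e)} through $\FF=\GG$ in the absence of multiple edges matches the paper's setup.
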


\subsection{Intrinsic metrics on the Pascal graph}\label{sec:Pascal_intrinsic}

We did not need to resort to Vershik's standardness criterion 
(Lemma~\ref{lemma:VershikMarkov}) to 
prove standardness of the Pascal adic filtrations  
(Proposition~\ref{ppsition:pascalstandard}). However, 
as we mentioned in Section~\ref{sec:intrinsic}, it 
is interesting to have a look at the intrinsic metrics $\rho_n$ on the state 
space $\Vb_n=\{0, \ldots, |n|\}$ of $V_n$, starting from the $0$-$1$ distance on 
$\Vb_{-1}$. 
The $\rho_n$ are easily obtained by Proposition~\ref{ppsition:KantoMarkov}: 
the distance $\rho_n(v_n, v'_n)$ is nothing but the Kantorovich distance between 
$\LL(V_{-1} \given V_n=v_n)$ and $\LL(V_{-1} \given V_n=v'_n)$, and then 
$$
\rho_n(v_n, v'_n) = \frac{|v_n-v'_n|}{|n|},
$$
wherefrom it is not difficult to apply Lemma~\ref{lemma:VershikMarkov} to get 
standardness of $\FF$. 
The space $(\Vb_n,\rho_n)$ is isometric to the subset 
$\bigl\{\frac{k}{|n|}, k = 0, \ldots, |n|\bigr\}$ 
of the unit interval $[0,1]$. 
Figure~\ref{fig:PascalKanto} shows an embedding of the Pascal graph 
in the plane such that $\rho_n$ is given by the Euclidean distance at each level $n$. 

\begin{figure}[!h]
\centering
       \includegraphics[scale=0.7]{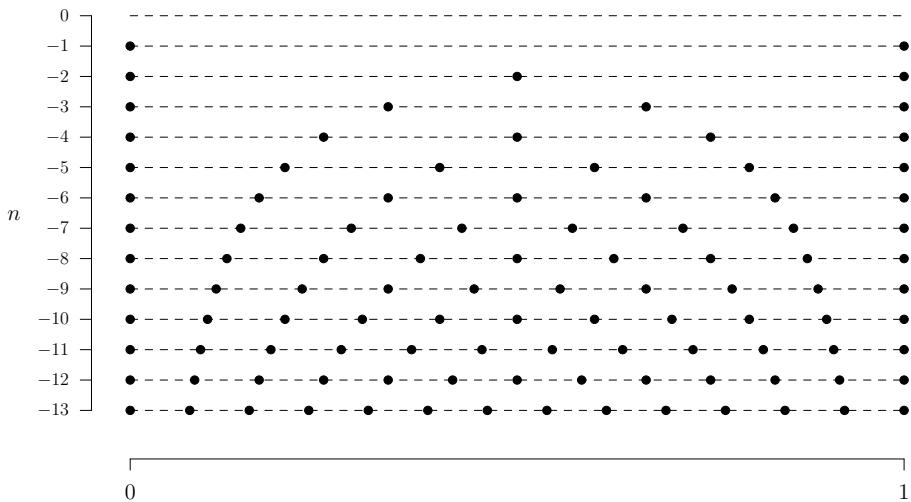}
\caption{The Pascal graph under the intrinsic metrics}
\label{fig:PascalKanto}
\end{figure}


\section{Euler filtration}\label{sec:Euler}

The Euler graph, shown on Figure~\ref{fig:Euler1} from level 
$n=0$ to level $n=-5$, 
has the same vertex set as the Pascal graph, but has multiple edges: Vertex $v$ of level $n$ is connected to vertex $v$ of level $n-1$ by $v+1$ edges, and to vertex $v+1$ of level $n-1$ by $|n|+1-v$ edges. We refer to~\cite{FP,FKPS,PV} for properties of this graph. In particular, the number of paths connecting vertex $v$ of level $n \leq -1$ to 
the root vertex at level $0$ is the \emph{Eulerian number} $A(|n|+1, v)$. 


 \begin{figure}[!h]
 \centering
     \begin{subfigure}{\textwidth}
        \centering
        \includegraphics[scale=0.8]{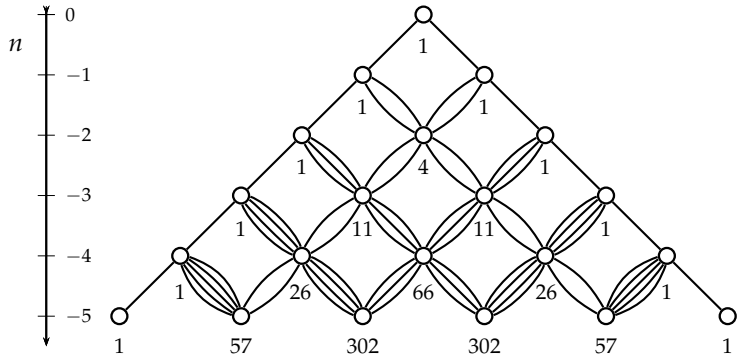}
        \caption{Euler graph}\label{fig:Euler1}
     \end{subfigure}          
     
     
     \vspace{5mm}
     
     \begin{subfigure}{\textwidth}  
        \centering
        \includegraphics[scale=0.8]{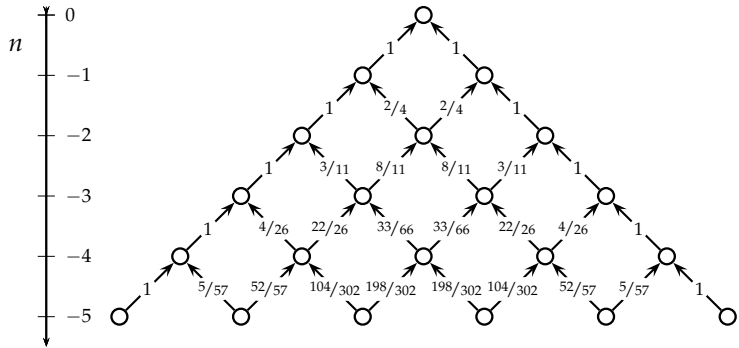}
        \caption{Walk on the vertices}\label{fig:Euler2}
     \end{subfigure} 
 \caption{Euler random walk}
 \label{fig:Euler}
 \end{figure}
 
 It is shown in~\cite{GO} that there exist countably many ergodic central measures on $\Gamma_B$ for this graph. However, only one of them, called the \emph{symmetric measure}, has full support, 
 as shown in~\cite{FP} (the others are concentrated on paths whose distance to one of the sides of the triangle is bounded).

Given a probability measure on $\Gamma_B$,   
  as explained in Section~\ref{sec:adic}, 
we consider a stochastic process ${(G_n)}_{n \leq 0}$ distributed on $\Gamma_B$ according to $\mu$, 
where $G_n$ is the edge at level $n$, and we are interested in the filtration 
$\GG$ it generates. This filtration is also generated by  the 
process  ${(V_n,\eps_n)}_{n \leq 0}$, where $V_n$ is the vertex at level $n$ and 
$\eps_n$ the label connecting $V_{n-1}$ to $V_n$. 
 Under the symmetric measure, the process ${(G_n)}_{n\le0}$ is Markovian 
 and the conditional distribution of $G_{n-1}$ given $G_n$ is the uniform distribution among the $|n|+2$ edges in $\Eb_n$ connected to $G_n$. 
We will derive standardness of the filtration $\GG$ under the symmetric measure. 
 The explicit conditional distributions $\LL(V_{-1} \given V_n=v_n)$ 
 can be derived from Equation~(1.1) in~\cite{PV}, but to  show standardness 
 we will only 
use the following result coming from Equation~(1.3) in~\cite{PV}: 
$$\lim_{n\to-\infty}\PP(V_{-1}=1 \given V_n=v_n) = \frac{1}{2}$$ 
for every sequence ${(v_n)}_{n \leq 0}$  of vertices $v_n\in\Vb_n$ 
 such that both $v_n$ and $|n|-v_n$ go to infinity as $n\to-\infty$.

\subsection{Standardness}\label{sec:Euler_standard} 

For the Euler filtration we have to deal with multiple edges: 
$\GG$ is generated by the Markov process ${(V_n,\eps_n)}_{n \leq 0}$ 
 (Section~\ref{sec:adic}) and Theorem~\ref{thm:monotonic} can only provide 
 a generating parameterization of the smaller filtration $\FF$ 
 generated by  
 the random walk on the vertices ${(V_n)}_{n \leq 0}$. 
 A generating parameterization of $\GG$ will be derived by 
 applying Theorem~\ref{thm:monotonic} to ${(V_n)}_{n \leq 0}$ 
 and then by applying  Lemma~\ref{lemma:multipleedges}. 

%

\begin{lemma}
  \label{lemma:Euler}
  Under the symmetric central measure $\mu$, we have
  $$ V_n \tend[a.e.]{n}{-\infty} \infty, \quad\text{and }|n|-V_n \tend[a.e.]{n}{-\infty} \infty.$$
\end{lemma}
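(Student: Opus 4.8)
The plan is to exploit two structural features of the Euler walk under the symmetric measure: the monotonicity of the label along the downward direction, and the explicit form of the one-step downward transition. First I would record that, since each vertex $v$ at level $n$ is joined only to vertices $v$ and $v+1$ at level $n-1$, the label can only stay put or increase by one as $n$ decreases; hence the sequence ${(V_n)}_{n\le0}$ is non-decreasing as $n\to-\infty$ and converges almost surely to a limit $L\in\N\cup\{\infty\}$. The same computation shows that $|n|-V_n$ is also non-decreasing as $n\to-\infty$. Moreover the reflection $v\mapsto|n|-v$ is an automorphism of the Euler graph preserving edge multiplicities, and the symmetric measure is invariant under it, so the processes ${(V_n)}$ and ${(|n|-V_n)}$ have the same law. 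It therefore suffices to prove $V_n\to\infty$ almost surely; the statement for $|n|-V_n$ follows by this symmetry (or by running the identical argument below, since the transition of $W_n:=|n|-V_n$ has the same shape).

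To prove $\PP(L<\infty)=0$, I would use the downward Markov transition of the symmetric measure, which amounts to selecting uniformly one of the $|n|+2$ edges descending from $V_n$: given $V_n=v$,
$$\PP(V_{n-1}=v\mid V_n=v)=\frac{v+1}{|n|+2},\qquad \PP(V_{n-1}=v+1\mid V_n=v)=\frac{|n|+1-v}{|n|+2}.$$
(These transitions do reproduce the Eulerian marginals $\PP(V_n=v)=A(|n|+1,v)/(|n|+1)!$, which is a useful consistency check.) Fix $\ell\in\N$ and $n_0\le0$. On the event that $V_m=\ell$ for every $m\le n_0$, each downward step from level $n_0$ must keep the label fixed at $\ell$, so by the Markov property the probability of this event equals $\PP(V_{n_0}=\ell)\prod_{m\le n_0}\frac{\ell+1}{|m|+2}$. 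Reindexing $k=|m|$, the partial products
$$\prod_{k=|n_0|}^{K}\frac{\ell+1}{k+2}=(\ell+1)^{K-|n_0|+1}\,\frac{(|n_0|+1)!}{(K+2)!}$$
tend to $0$ as $K\to\infty$, because $(\ell+1)^{K}/(K+2)!\to0$. Hence $\PP(V_m=\ell\text{ for all }m\le n_0)=0$.

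Finally, since $V_n$ is non-decreasing with limit $L$, the event $\{L=\ell\}$ is contained in the countable union $\bigcup_{n_0\le0}\{V_m=\ell\text{ for all }m\le n_0\}$, a union of null sets and hence null; summing over $\ell$ gives $\PP(L<\infty)=0$, i.e.\ $V_n\to\infty$ almost surely. The only genuinely delicate point is pinning down the downward transition kernel: I must be sure that the symmetric measure really selects the descending edge uniformly (equivalently that $\PP(V_{n-1}=v\mid V_n=v)=(v+1)/(|n|+2)$), which I would justify either from the edge-uniform description of the symmetric measure recalled just before the lemma or from the explicit conditional laws in~\cite{PV}; everything downstream is then a soft monotonicity-plus-vanishing-product argument. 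As an alternative to the product estimate one could instead apply Chebyshev's inequality to the Eulerian (descent) distribution, whose mean and variance are both of order $|n|$, to obtain $\PP(V_n\le\ell)\to0$ directly and combine it with monotonicity.
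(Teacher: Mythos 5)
Your proof is correct, but it takes a genuinely different route from the paper's. The paper proves the lemma by a stochastic comparison: it introduces the simple symmetric random walk $(\tilde V_n)_{n\le 0}$ with transitions $\PP(\tilde V_{n-1}=v\mid \tilde V_n=v)=\PP(\tilde V_{n-1}=v+1\mid \tilde V_n=v)=\tfrac12$, notes that the claimed divergences hold for $\tilde V_n$ by the law of large numbers, and then constructs a coupling of the two chains under which $\bigl|V_n-\tfrac{|n|}{2}\bigr|\le\bigl|\tilde V_n-\tfrac{|n|}{2}\bigr|$ almost surely for all $n$, so that $V_n$ inherits the property. You instead combine the monotonicity of both $V_n$ and $|n|-V_n$ (as $n\to-\infty$) with the explicit downward kernel of the symmetric measure, and kill each freezing event $\{V_m=\ell\ \forall m\le n_0\}$ by the vanishing product $(\ell+1)^{K}/(K+2)!\to0$. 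Your kernel $\PP(V_{n-1}=v\mid V_n=v)=(v+1)/(|n|+2)$ is indeed the correct reading of the edge-uniform description (it is the unique choice consistent with the $|n|+2$ descending edges from a level-$n$ vertex, and, as you note, it reproduces the Eulerian marginals via the recursion $A(m+1,k)=(k+1)A(m,k)+(m+1-k)A(m,k-1)$), so the one ``delicate point'' you flag is sound; the backward Markov property needed to factor the freezing probability is immediate from the definition of the symmetric measure. Your symmetry reduction is also valid, since $v\mapsto|n|-v$ preserves edge multiplicities, and in any case your fallback of running the identical product argument for $W_n=|n|-V_n$ makes the second assertion independent of any invariance claim. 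Comparing the two: your argument is more elementary and self-contained (no coupling construction, no appeal to the law of large numbers), at the price of using the exact transition probabilities; the paper's coupling is shorter and yields strictly more, namely that $V_n$ is at least as concentrated about the midline $|n|/2$ as a simple symmetric random walk (hence $V_n/|n|\to 1/2$ almost surely), though the paper leaves the construction of that coupling to the reader, whereas every step of your argument is explicit.
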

\begin{proof}
  Consider the Markov process $(\tilde V_n)_{n\le 0}$ where $\tilde V_n$ takes its values in $\Vb_n$, defined by the conditional distribution 
  $$ \PP(\tilde V_{n-1}=v\given \tilde V_n=v)=\PP(\tilde V_{n-1}=v+1\given \tilde V_n=v)=\frac{1}{2}.$$ 
  The process $(\tilde V_0, \tilde V_{-1}, \ldots)$ is nothing but the well-known simple 
  symmetric random walk.   
  By the law of large numbers, the property claimed for $V_n$ obviously holds for $\tilde V_n$. Moreover, we can easily construct a coupling of the two Markov processes for which, for all $n\le 0$,
  $$ \left| V_n - \frac{|n|}{2} \right| \le \left| \tilde V_n - \frac{|n|}{2} \right| \quad\text{a.s.} $$
Consequently $(V_n)$ inherits of the same property.
\end{proof}

\begin{ppsition}\label{ppsition:eulerstandard}
For the symmetric central measure $\mu$, the Euler filtration $\GG$ admits a generating parameterization, hence is standard. In particular, $\mu$ is ergodic. 
\end{ppsition}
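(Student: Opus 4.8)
The plan is to follow the two-step strategy announced at the start of Section~\ref{sec:Euler_standard}: first establish that the vertex filtration $\FF$ generated by ${(V_n)}_{n\le0}$ admits a generating parameterization by applying Theorem~\ref{thm:monotonic} to the monotonic Markov process ${(V_n)}_{n\le0}$, and then lift this generating parameterization to $\GG$ by means of Lemma~\ref{lemma:multipleedges}. As a preliminary, one checks that ${(V_n)}_{n\le-1}$ is a monotonic and identifiable Markov process: monotonicity is immediate because the transition $\LL(V_n\given V_{n-1}=u)$ is supported on the nearest-neighbour set $\{u-1,u\}$, which shifts upward with the starting vertex $u$, so stochastic domination holds automatically; identifiability is the separation property recalled at the end of Section~\ref{sec:intrinsic} and is readily checked from the edge structure of the Euler graph.

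Since the top vertex set $\Vb_0$ is a singleton, I would apply criterion \textit{2)} of Theorem~\ref{thm:monotonic} with $V_{-1}\in\{0,1\}$ playing the role of the final variable, exactly as in the proof of Proposition~\ref{ppsition:pascalstandard}. By Lemma~\ref{lemma:degenerate} together with Lemma~\ref{lemma:convergence_conditional_law}, the required equality $\LL(V_{-1}\given\FF_{-\infty})=\LL(V_{-1})$ amounts to the almost sure convergence $\PP(V_{-1}=1\given V_n)\to \frac12=\PP(V_{-1}=1)$. This is where the external inputs enter: Lemma~\ref{lemma:Euler} guarantees that, almost surely, both $V_n\to\infty$ and $|n|-V_n\to\infty$, so that for almost every realization the deterministic sequence $v_n=V_n$ satisfies the hypotheses of Equation~(1.3) in~\cite{PV}, which then yields $\PP(V_{-1}=1\given V_n=v_n)\to\frac12$. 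By the Markov property $\PP(V_{-1}=1\given\FF_n)=\PP(V_{-1}=1\given V_n)$, so the convergence holds and criterion \textit{2)} applies. Consequently condition \textit{(c)} of Theorem~\ref{thm:monotonic} holds: every increasing representation of ${(V_n)}_{n\le0}$ provides a generating parameterization of $\FF$.

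To pass from $\FF$ to $\GG$, I would invoke part \textit{2)} of Lemma~\ref{lemma:multipleedges}, whose hypothesis $\LL(\eps_n\given V_{n-1})=\LL(\eps_n\given\GG_{n-1})$ is automatic under a central measure. This produces an increasing parametric representation ${(f_n,U_n)}_{n\le0}$ of ${(V_n)}_{n\le0}$ for which ${(U_n)}_{n\le0}$ also parameterizes $\GG$ and for which $\eps_n=h_n(V_{n-1},U_n)$ in the notation of that lemma. By the preceding paragraph this increasing representation is generating for $\FF$, so $\FF\subset\UU$ where $\UU$ is the filtration generated by ${(U_n)}_{n\le0}$; in particular every $V_{n-1}$ is $\UU_{n-1}$-measurable. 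Since $\eps_n=h_n(V_{n-1},U_n)$ with $U_n$ being $\UU_n$-measurable, each $\eps_n$ is $\UU_n$-measurable, whence $\GG_n=\sigma(V_m,\eps_m;\,m\le n)\subset\UU_n$. Thus ${(U_n)}_{n\le0}$ is a generating parameterization of $\GG$, and $\GG$ is standard by Lemma~\ref{lemma:generatingparam}. Ergodicity of $\mu$ follows at once, since a standard filtration is Kolmogorovian, which is precisely ergodicity in the sense of Definition~\ref{def:ergodic}.

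The main obstacle is the convergence step, i.e.\ turning the boundary estimate of~\cite{PV} into the almost sure statement $\PP(V_{-1}=1\given\FF_n)\to\frac12$; this is exactly what forces the use of Lemma~\ref{lemma:Euler} to keep the random trajectory away from both sides of the triangle. The only conceptual subtlety is the lift to $\GG$: result \textit{1)} of Lemma~\ref{lemma:multipleedges} by itself does not transfer the generating property, and this is circumvented by using the explicit increasing representation of part \textit{2)} together with condition \textit{(c)}, so that $\eps_n$ is visibly recovered from $(V_{n-1},U_n)$.
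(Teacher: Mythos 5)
Your proof is correct and follows essentially the same route as the paper: criterion \textit{2)} of Theorem~\ref{thm:monotonic} applied to the monotonic, identifiable walk ${(V_n)}_{n\le-1}$, with Lemma~\ref{lemma:Euler} feeding the boundary estimate of Equation~(1.3) in~\cite{PV} to get $\LL(V_{-1}\given\FF_{-\infty})=\LL(V_{-1})$, and then the lift to $\GG$ via part \textit{2)} of Lemma~\ref{lemma:multipleedges} and Lemma~\ref{lemma:generatingparam}. Your final paragraph merely makes explicit the measurability argument ($\eps_n=h_n(V_{n-1},U_n)$, hence $\GG\subset\UU$) that the paper compresses into the phrase ``$\FF$ and $\GG$ admit a common generating parameterization,'' which is a welcome clarification rather than a deviation.
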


\begin{proof}
We first check criterion {\it 2)} in  Theorem~\ref{thm:monotonic} for ${(V_n)}_{n \leq -1}$ 
which obviously is a monotonic and identifiable Markov process. 
As we previously mentioned, 
 it follows from Equation~(1.3) in~\cite{PV} 
that $\mu_{v_n}(1) \to \frac12$ whenever $(v_n)$ is a sequence of vertices such that $v_n\in\Vb_n$ 
and both $v_n$ and $|n|-v_n$ go to infinity as $n\to-\infty$. 
We recognize the distribution of $V_{-1}$ under $\mu$, and using Lemma~\ref{lemma:Euler} we see that criterion {\it 2)} in  Theorem~\ref{thm:monotonic} is fulfilled.
Now, by  {\it(c)} in Theorem~\ref{thm:monotonic} and  
2) in Lemma~\ref{lemma:multipleedges}, $\FF$ and $\GG$ admit a common generating parameterization. It follows by Lemma~\ref{lemma:generatingparam} that $\GG$ is standard.
\end{proof}

Similarly to Theorem~\ref{thm:all_Markov_processes_in_Pascal} about the Pascal graph, one has 
the following theorem for the Euler graph. 
\begin{thm}
  \label{thm:all_Markov_processes_in_Euler}
  Under an ergodic probability measure on $\Gamma_B$ and under 
the conditional independence assumption 
  $\LL(\eps_n \given V_{n-1})= \LL(\eps_n \given \GG_{n-1})$,  
   the filtration $\GG$ admits a generating parameterization, hence is standard. 
\end{thm}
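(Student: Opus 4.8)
The plan is to mirror the argument used for the symmetric measure in Proposition~\ref{ppsition:eulerstandard}, the only points requiring re-examination being that monotonicity and Kolmogorovianity hold under the present more general hypotheses rather than under centrality. First I would record that the conditional independence assumption $\LL(\eps_n \given V_{n-1})= \LL(\eps_n \given \GG_{n-1})$ forces the vertex process ${(V_n)}_{n \leq 0}$ to be Markovian, as already noted just before Lemma~\ref{lemma:multipleedges}.

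Next I would check that ${(V_n)}_{n \leq 0}$ is automatically a monotonic $\RR$-valued Markov process, exactly as on the Pascal graph. On the Euler graph a vertex $w$ at level $n-1$ is joined (by some edges) only to the vertices $w-1$ and $w$ at level $n$, so $\LL(V_n \given V_{n-1}=w)$ is supported on $\{w-1,w\}$. Consequently, for integers $w<w'$ the whole support of $\LL(V_n \given V_{n-1}=w)$ lies to the left of that of $\LL(V_n \given V_{n-1}=w')$, since $\max\{w-1,w\}=w\le w'-1=\min\{w'-1,w'\}$; hence $\LL(V_n \given V_{n-1}=w) \leqst \LL(V_n \given V_{n-1}=w')$ trivially, each transition kernel is increasing, and ${(V_n)}_{n \leq 0}$ is monotonic in the sense of Definition~\ref{def:monotonic}.

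I would then transfer ergodicity from $\GG$ to $\FF$: since $\mu$ is ergodic, $\GG$ is Kolmogorovian, and because $\FF \subset \GG$ we have $\FF_{-\infty}\subset\GG_{-\infty}$, so $\FF$ is Kolmogorovian too, i.e.\ condition \textit{(e)} of Theorem~\ref{thm:monotonic} holds for ${(V_n)}_{n \leq 0}$. Applying the implication \textit{(e)}$\implies$\textit{(c)} of Theorem~\ref{thm:monotonic}, every increasing representation of ${(V_n)}_{n \leq 0}$ is a generating parameterization of $\FF$. Using part~2) of Lemma~\ref{lemma:multipleedges} I would pick such an increasing representation ${(f_n,U_n)}_{n \leq 0}$ for which the parameterization ${(U_n)}_{n \leq 0}$ is simultaneously a parameterization of $\GG$ and, by part~1), for which the parametric extension of $\FF$ by ${(U_n)}$ coincides with that of $\GG$ by ${(U_n)}$. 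As ${(U_n)}$ is then generating for $\FF$ we get $\FF\vee\UU=\UU$, whence $\GG\vee\UU=\UU$ by the coincidence of the two extensions, so ${(U_n)}$ is generating for $\GG$ as well; by Lemma~\ref{lemma:generatingparam}, $\GG$ is standard.

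The only genuinely structural step is the passage from the coarse vertex filtration $\FF$, to which Theorem~\ref{thm:monotonic} applies directly, to the finer edge filtration $\GG$; this is precisely what the common-parameterization machinery of Lemma~\ref{lemma:multipleedges} supplies. Since monotonicity is forced by the graph and Kolmogorovianity is exactly the stated hypothesis, I expect no new difficulty beyond what was already handled in Proposition~\ref{ppsition:eulerstandard}.
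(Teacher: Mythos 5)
Your proposal is correct and follows essentially the same route as the paper's proof, which deduces Markovianity of ${(V_n)}_{n\le 0}$ from the conditional independence assumption, obtains a generating parameterization of $\FF$ via Theorem~\ref{thm:all_Markov_processes_in_Pascal} (itself an instance of Theorem~\ref{thm:monotonic}, since the Euler graph has the same vertex structure as the Pascal graph), and lifts it to $\GG$ exactly as in Proposition~\ref{ppsition:eulerstandard} by combining Theorem~\ref{thm:monotonic} with Lemma~\ref{lemma:multipleedges}. Your explicit verification of monotonicity from the support structure $\{w-1,w\}$ and the transfer of Kolmogorovianity via $\FF_{-\infty}\subset\GG_{-\infty}$ merely spell out details the paper leaves implicit.
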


\begin{proof}
Under the conditional independence assumption, the process 
 ${(V_n)}_{n \leq 0}$  is Markovian, and the filration $\FF$ it generates 
 admits a generating parameterization by  Theorem~\ref{thm:all_Markov_processes_in_Pascal}. 
 We conclude similarly to the proof of Proposition~\ref{ppsition:eulerstandard}, 
 combining Theorem~\ref{thm:monotonic} and   Lemma~\ref{lemma:multipleedges}. 
\end{proof}


\subsection{Intrinsic metrics on the Euler graph}\label{sec:Euler_intrinsic}

Similarly to the Pascal case, 
the intrinsic metrics $\rho_n$ on the state 
space $\Vb_n=\{0, \ldots, |n|\}$ of $V_n$, starting from the discrete distance on 
$\Vb_{-1}$, 
are easily obtained by Proposition~\ref{ppsition:KantoMarkov}: 
The distance $\rho_n(v_n, v'_n)$ is nothing but the Kantorovich distance between 
$\LL(V_{-1} \given V_n=v_n)$ and $\LL(V_{-1} \given V_n=v'_n)$. 
We can explicit these conditional laws using the formula provided by Equation~(1.1) 
in~\cite{PV}, 
which gives the number of paths 
 connecting a vertex $v_n$ at some level $n \leq -2$ to the right vertex 
 at level $-1$. The number of such paths is the generalized Eulerian number 
$$
A_{0,1}(|n|-v_n,v_n-1) = 
\sum_{t=0}^{|n|-v_n} {(-1)}^{|n|-v_n-t} {t+2 \choose t}{|n|+2 \choose |n|-v_n-t}{(1+t)}^{|n|-1}.
$$
Recalling that the total number of paths connecting vertex $v_n$ of level $n$ to the root of the graph is the classical Eulerian number $A(|n|+1, v_n)$, we get  the conditional law $\LL(V_{-1} \given V_n=v_n)$ under the centrality assumption: It is the probability on $\{0,1\}$ given by 
$$ \PP(V_{-1}=1 \given V_n=v_n) 
= \frac{A_{0,1}(|n|-v_n,v_n-1)}{A(|n|+1, v_n)}. $$
From this, we can derive the following formula giving the intrincic metric at level $n$:
$$
\rho_n(v_n, v'_n) = \left|\frac{A_{0,1}(|n|-v_n,v_n-1)}{A(|n|+1, v_n)}-\frac{A_{0,1}(|n|-v'_n,v'_n-1)}{A(|n|+1, v'_n)}\right|. 
$$
We also know by Proposition~\ref{ppsition:KantoMarkov} that the 
 space $(\Vb_n,\rho_n)$ is isometric a subset 
of the unit interval $[0,1]$. 
Figure~\ref{fig:EulerKanto} shows an embedding of the Euler graph 
in the plane such that $\rho_n$ is given by the Euclidean distance at each level $n$. 

\begin{figure}[!h]
\centering
       \includegraphics[scale=0.7]{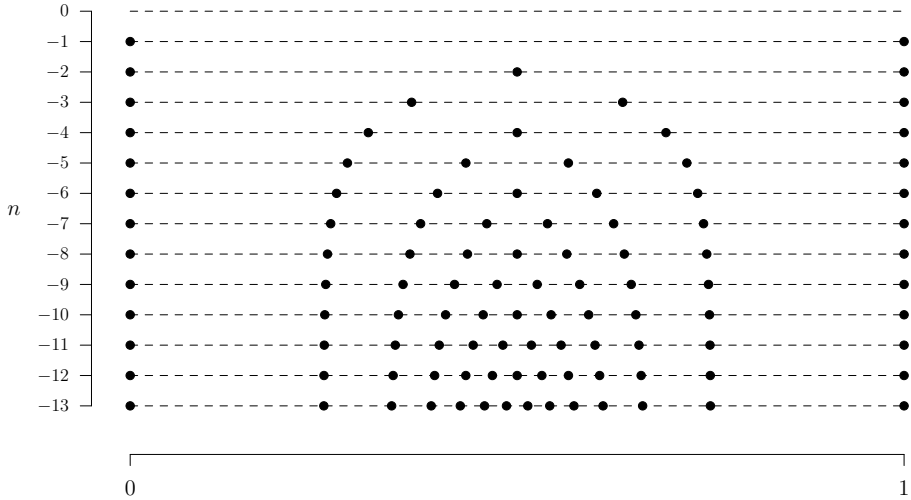}
\caption{The Euler graph under the intrinsic metrics}
\label{fig:EulerKanto}
\end{figure}

\section{Multidimensional Pascal filtration} 
\label{sec:mpascal}
Now we introduce the $d$-dimensional Pascal graph. 
The Pascal graph of Section~\ref{sec:Pascal} corresponds to the case $d=2$.
We will provide three different proofs that the filtration is standard for any dimension $d\geq 2$ 
under the known ergodic central measures.  
The first proof is an application of Theorem~\ref{thm:monotonicmulti}. 
The second proof is an application of  Theorem~\ref{thm:fullymonotonicmulti}, 
 using Proposition~\ref{ppsition:kantomulti} to  derive the intrinsic metrics $\rho_n$. 
These two proofs  only provides standardness, not a generating parameterization. 
In the third proof we construct a generating parameterization with the help of 
Theorem~\ref{thm:monotonic}.

Let $d \geq 2$ be an integer or $d=\infty$. 
Vertices of the $d$-dimensional Pascal graph are points 
$(i_1, \ldots, i_d) \in \N^d$ when $d<\infty$. 
When $d=\infty$, the vertices are the sequences 
$(i_1, i_2, \ldots) \in \N^\infty$ with finitely many nonzero terms. 
 The set of vertices at level $n$ is 
$$\Vb_n^d = \bigl\{(i_1, \ldots, i_d) \in \N^d 
 \mid i_1+\cdots + i_d=|n|\bigr\}$$ 
 and two vertices $(i_1, \ldots, i_d) \in \Vb_n^d$ and 
  $(j_1, \ldots, j_d) \in \Vb_{n-1}^d$ are connected if and only if 
  $\sum|i_k - j_k|=1$.

Since there is no multiple edge in the graph,  for any central probability measure, 
 the corresponding adic filtration $\GG$ 
is generated by the Markovian random walk on the vertices. 
Temporarily denoting by ${(V_n)}_{n \leq 0}$ this random walk, 
centrality means that the Markovian transition from $n$ to $n+1$ is given by 
\begin{equation}
  \label{eq:multi-dim-Pascal}
  \LL(V_{n+1} \given V_n=v) = \sum_{i=1}^d \frac{v(i)}{|n|}\delta_{v-e_i},
\end{equation}
where $e_i$ is the vector whose $i$-th 
term is $1$ and all the other ones are $0$. 

It is known (see~\cite{FP2}, Theorem 5.3) that a central measure is ergodic if and only if 
there is a probability vector $(\theta_1, \ldots, \theta_d)$  such that 
every Markov  transition from $n$ to $n-1$ is given by 
$$\Pr(V_{n-1}=v_n+e_i \given V_n=v_n) 
= \theta_i \quad \text{for all } i.$$ 
Under this ergodic central measure, $V_n$ has the multinomial distribution with 
parameter  $(\theta_1, \ldots, \theta_d)$ (see Figure~\ref{fig:MultiPascal}).
For this reason, 
let us term the ergodic central measures as the 
 \emph{multinomial central measures}. 
 It is not difficult to check that the multinomial central measures are ergodic, 
but in our second and third proofs of standardness we will not use ergodicity.

 \begin{figure}[th]
 \centering
 
     \begin{subfigure}{0.43\textwidth}
        \centering
        \includegraphics[scale=0.53]{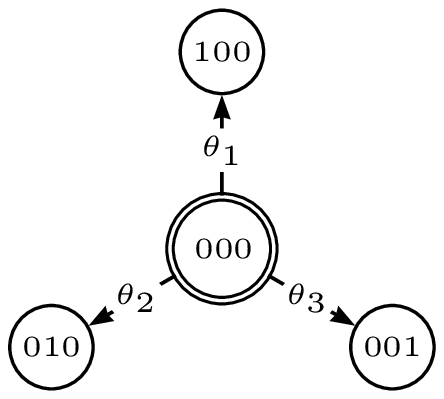}
        \caption{From $n=0$ to $n=-1$}\label{fig:a}
     \end{subfigure}          
     \qquad  
     \begin{subfigure}{0.43\textwidth}  
        \centering
        \includegraphics[scale=0.53]{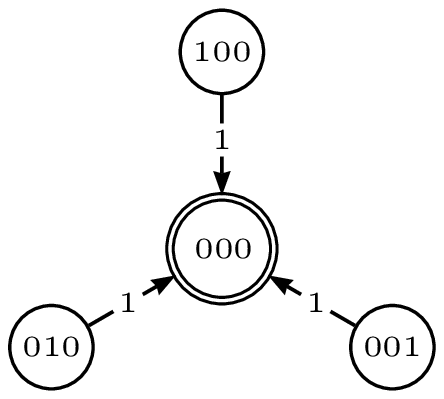}
        \caption{From $n=-1$ to $n=0$}\label{fig:aa}
     \end{subfigure}    
 
     \begin{subfigure}{0.43\textwidth}
        \centering
        \includegraphics[scale=0.53]{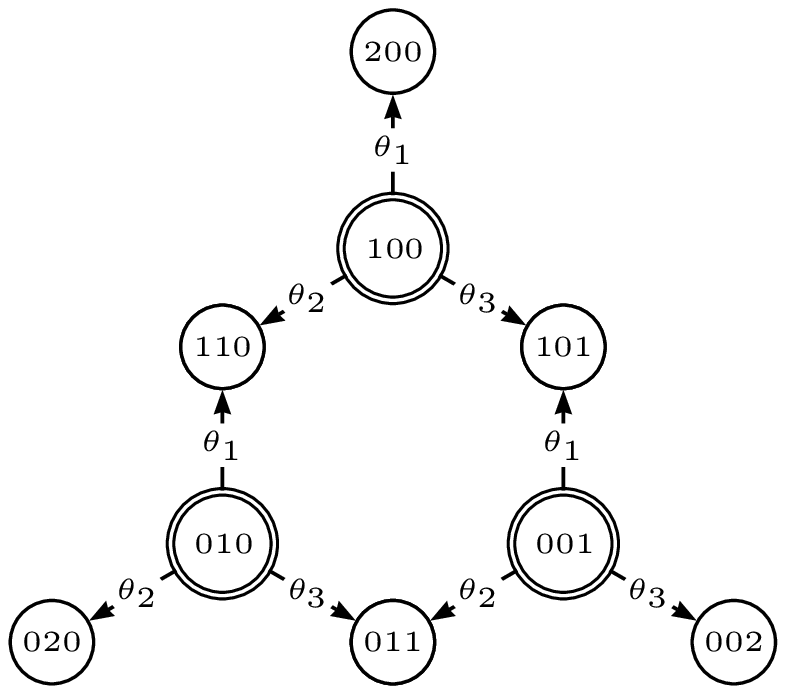}
        \caption{From $n=-1$ to $n=-2$}\label{fig:b}
     \end{subfigure}          
     \qquad 
     \begin{subfigure}{0.43\textwidth}  
        \centering
        \includegraphics[scale=0.53]{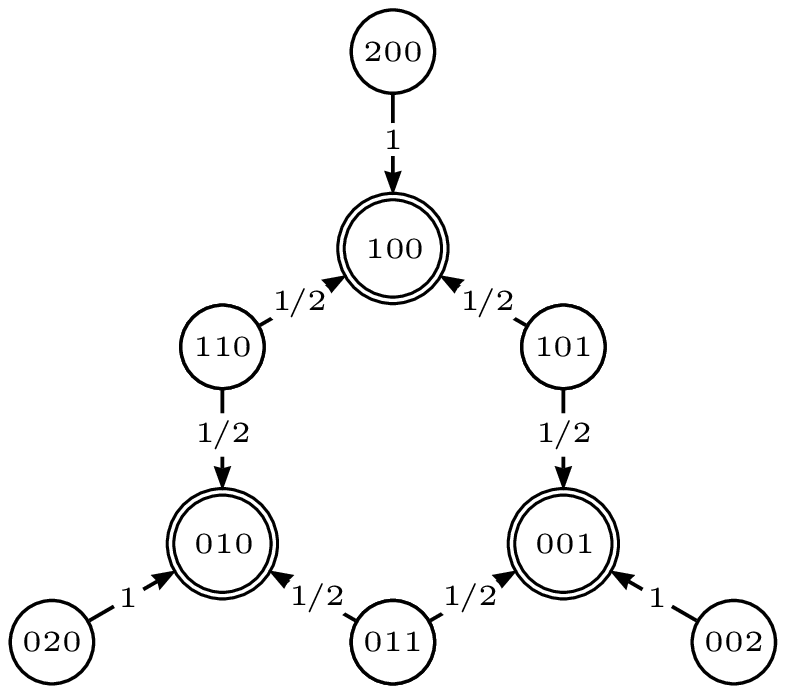}
        \caption{From $n=-2$ to $n=-1$}\label{fig:bb}
     \end{subfigure}    
 
     \begin{subfigure}{0.43\textwidth}
        \centering
        \includegraphics[scale=0.53]{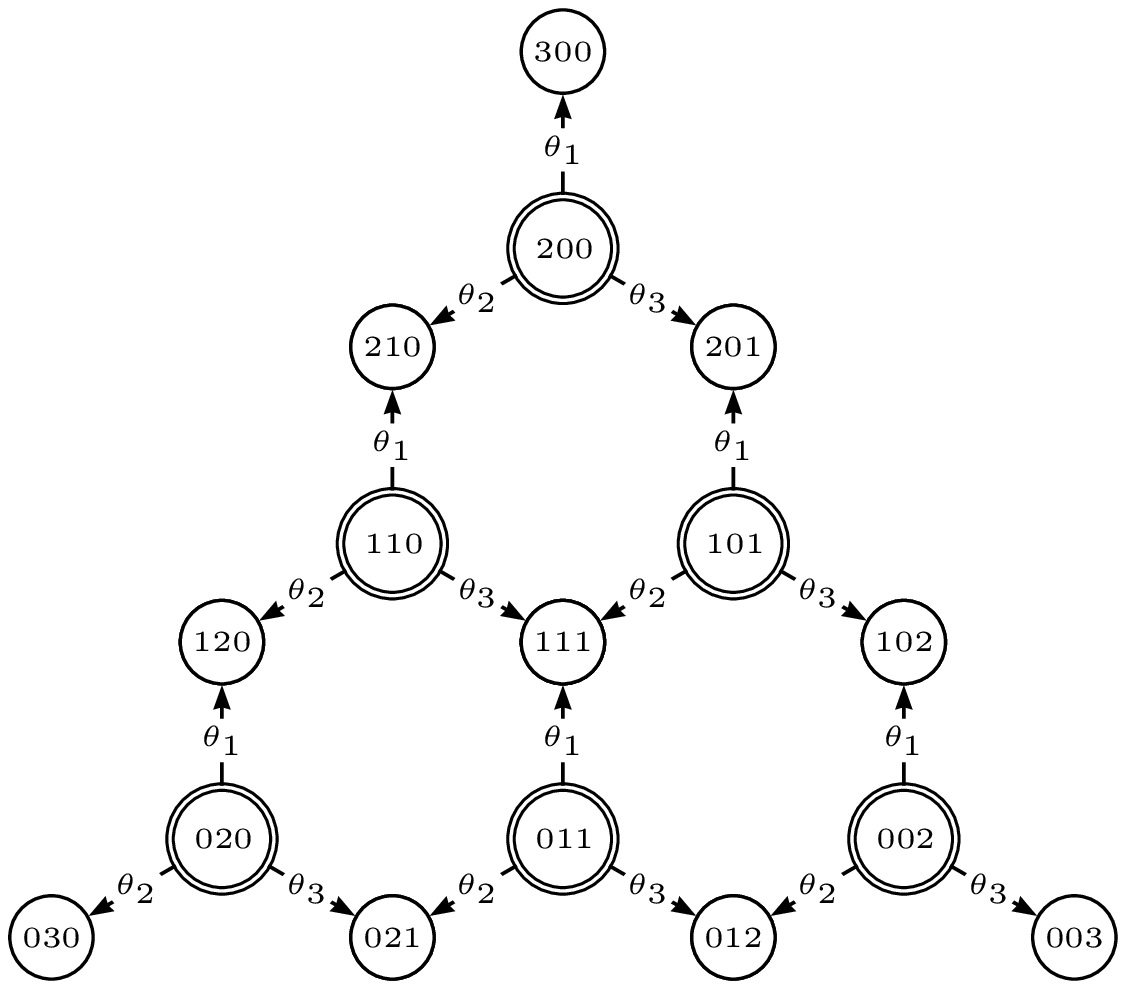}
        \caption{From $n=-2$ to $n=-3$}\label{fig:c}
     \end{subfigure}          
     \qquad 
     \begin{subfigure}{0.43\textwidth}  
        \centering
        \includegraphics[scale=0.53]{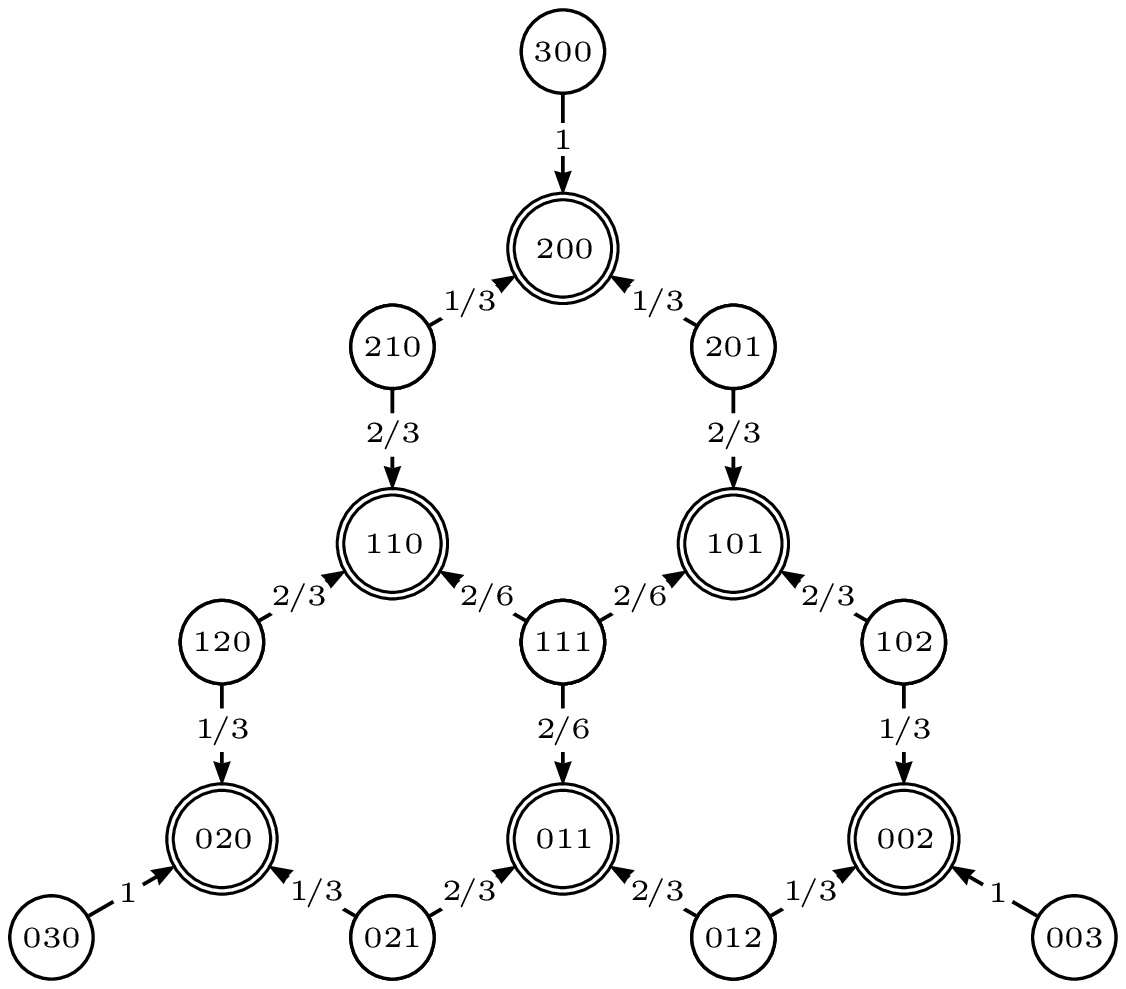}
        \caption{From $n=-3$ to $n=-2$}\label{fig:cc}
     \end{subfigure}    
 
 \caption{Random walk on the Pascal pyramid ($d=3$)} 
 \label{fig:MultiPascal}
 \end{figure}

From now on, we denote by ${(V_n^{d,\theta})}_{n \leq 0}$ 
the Markovian random walk corresponding to $(\theta_1, \ldots, \theta_d)$.
We write $V^{d,\theta}_n=\bigl(V^{d,\theta}_n(1), \ldots, V^{d,\theta}_n(d)\bigr)$. 
Each process ${\bigl(V^{d,\theta}_n(i)\bigr)}_{n \leq 0}$ is the random walk 
on the vertices of the Pascal graph as in Section~\ref{sec:Pascal}, and 
is Markovian with respect to $\GG$, that is, the filtration 
$\GG(i)$ generated by the process ${\bigl(V^{d,\theta}_n(i)\bigr)}_{n \leq 0}$ 
is immersed in $\GG$ 
(thus the multidimensional process satisfies conditions a) and b) of 
Definition~\ref{def:fullymonotonic}). 

It is worth mentioning that standardness of $\GG$ cannot be deduced from the 
equality $\GG=\GG(1) \vee \cdots \vee \GG(d)$ and from the fact that 
the filtrations $\GG(i)$ are standard and jointly immersed: This 
is a consequence of theorem~3.9 in~\cite{LauXLIII}, but 
Example~\ref{example:square} also provides a counter-example, 
and more precisely it shows that even the degeneracy of $\GG_{-\infty}$ cannot be 
deduced from the degeneracy of each $\GG_{-\infty}(i)$.

\begin{ppsition}
\label{prop:multi-standard}
The $d$-dimensional Pascal filtration generated by the process 
${\bigl(V^{d,\theta}_n\bigr)}_{n \leq 0}$ is standard for any $d \geq 2$ 
under any multinomial central measure. Consequently, the multinomial 
central measures are ergodic. 
\end{ppsition}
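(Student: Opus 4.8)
The plan is to apply Theorem~\ref{thm:monotonicmulti} to the $d$-dimensional random walk ${\bigl(V^{d,\theta}_n\bigr)}_{n \leq 0}$, whose values lie in the simplices $\Vb_n^d$ of nonnegative integer vectors summing to $|n|$. Since the equivalence of conditions (a), (b), (c) in that theorem presupposes monotonicity in the sense of Definition~\ref{def:monotonic-multidim}, the argument splits into two independent tasks: first, to exhibit for each pair of vertices a well-ordered coupling of the forward transitions; second, to verify condition (b), namely $\LL(V^{d,\theta}_m\given\FF_{-\infty})=\LL(V^{d,\theta}_m)$ for every $m\le0$. Once both hold, (b)$\implies$(c) yields standardness, and (c)$\implies$(a) gives that $\FF$ is Kolmogorovian, i.e.\ that $\mu$ is ergodic.

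\emph{Monotonicity.} Recall from~\eqref{eq:multi-dim-Pascal} that the forward transition from level $n$ to level $n+1$ removes a single ball: starting from $v$, the walk jumps to $v-e_I$ where $\PP(I=i)=v(i)/|n|$. Given $x,x'\in\Vb_n^d$, I would build the coupling $(Y,Y')\egdef(x-e_I,\,x'-e_{I'})$ as follows. Write $E\egdef\{k:x(k)=x'(k)\}$ and $s\egdef\sum_{k\in E}x(k)$, and observe the crucial identity $\PP(I\in E)=\PP(I'\in E)=s/|n|$. With probability $s/|n|$ I force $I=I'$ to be a common coordinate $k\in E$, drawn with probability $x(k)/s$; with probability $1-s/|n|$ I draw $I$ and $I'$ independently from $E^{c}$, with probabilities proportional to $x(\cdot)$ and $x'(\cdot)$. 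One checks immediately that the marginals are the prescribed ones and that this law depends measurably on $(x,x')$. For the well-ordering: on $k\in E$ we have $Y(k)=Y'(k)$ by construction; on $k$ with $x(k)<x'(k)$ one gets $Y(k)\le x(k)\le x'(k)-1\le Y'(k)$, and symmetrically when $x(k)>x'(k)$. Hence $(Y,Y')$ is well-ordered with respect to $(x,x')$, so ${\bigl(V^{d,\theta}_n\bigr)}_{n \leq 0}$ is monotonic.

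\emph{Condition (b).} I would establish this by the law of large numbers, as in the one-dimensional Proposition~\ref{ppsition:pascalstandard}. Under the multinomial central measure the reverse walk $(V^{d,\theta}_0,V^{d,\theta}_{-1},\ldots)$ adds, at each backward step, an independent color $i$ with probability $\theta_i$; hence $V^{d,\theta}_n$ is a sum of $|n|$ i.i.d.\ multinomial$(1;\theta)$ increments and $V^{d,\theta}_n(i)/|n|\to\theta_i$ almost surely. By the Markov property, $\LL(V^{d,\theta}_m\given\FF_n)=\LL(V^{d,\theta}_m\given V^{d,\theta}_n)$, so by Lemma~\ref{lemma:degenerate} it suffices to show $\LL(V^{d,\theta}_m\given V^{d,\theta}_n)\to\LL(V^{d,\theta}_m)$ a.s.\ as $n\to-\infty$. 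By centrality, $\LL(V^{d,\theta}_m\given V^{d,\theta}_n=v_n)$ is the law of the configuration left after removing $|n|-|m|$ balls uniformly without replacement from the multiset $v_n$, i.e.\ a multivariate hypergeometric law; since $v_n/|n|\to\theta$ and $|m|$ is fixed, this converges to the multinomial law with parameters $|m|$ and $(\theta_1,\ldots,\theta_d)$, which is exactly $\LL(V^{d,\theta}_m)$. Lemma~\ref{lemma:degenerate} then gives $\LL(V^{d,\theta}_m\given\FF_{-\infty})=\LL(V^{d,\theta}_m)$ for every $m$.

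The main obstacle is the construction of the well-ordered coupling: because exactly one coordinate decreases per step, the coupling cannot be performed coordinate by coordinate but must synchronize the two removals precisely on the coordinates where $x$ and $x'$ agree. The identity $\PP(I\in E)=\PP(I'\in E)=s/|n|$ is what makes this synchronization consistent with the prescribed marginals, and is the heart of the argument. The verification of condition (b) is thereafter a routine law-of-large-numbers estimate, and the passage from standardness to ergodicity is immediate from Theorem~\ref{thm:monotonicmulti}.
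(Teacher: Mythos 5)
Your proof is correct, and it uses the same main tool as the paper's first proof (Theorem~\ref{thm:monotonicmulti}), but it differs in one substantive way. On monotonicity you essentially rediscover the paper's coupling: the paper realizes $(Y,Y')$ from a single uniform $U$ on $\{1,\ldots,|n|\}$ via two partitions $(A_i)$ and $(A'_i)$ with $|A_i|=x(i)$, $|A'_i|=x'(i)$ and $A_i=A'_i$ whenever $x(i)=x'(i)$; your mixture coupling (synchronize $I=I'$ on the agreement set $E$ with probability $s/|n|$, draw independently off $E$ otherwise) is a minor variant resting on exactly the same two observations the paper states, namely that equal coordinates must stay synchronized and that the unit-decrement property makes the order automatic on unequal coordinates. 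The genuine difference is in the second hypothesis: the paper's first proof does \emph{not} verify condition \textit{(b)} of Theorem~\ref{thm:monotonicmulti}; it takes the triviality of $\GG_{-\infty}$ (condition \textit{(a)}) as known from the literature, so that within that proof ergodicity is an input rather than a consequence, and the paper compensates with its second and third proofs, which avoid this assumption by other means (identifiability plus Theorem~\ref{thm:fullymonotonicmulti} checking only $\LL(V_{-1}\given\FF_n)\to\LL(V_{-1})$, respectively an explicit generating parameterization reducing to the two-dimensional Pascal graph via the partial sums $\bar{V}_n^{d,\theta}(i)$). You instead verify \textit{(b)} directly for every level $m$, using the representation of $\LL(V_m^{d,\theta}\given V_n^{d,\theta}=v_n)$ as a multivariate hypergeometric law (uniform removal of $|n|-|m|$ balls) together with the law of large numbers $V_n^{d,\theta}/|n|\to\theta$, and then Lemma~\ref{lemma:degenerate}. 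This makes your single argument self-contained, so the deduction ``standard, hence ergodic'' is genuinely non-circular, which is what the paper only achieves by combining proofs; what you give up relative to the paper's third proof is the explicit generating parameterization (Theorem~\ref{thm:monotonicmulti} yields standardness only). One small point to attend to: for $d=\infty$ the state spaces $\Vb_m^d$ are not compact in the weighted metric $\rho(x,x')=\sum_k a_k|x(k)-x'(k)|$ (mass can escape to high-index coordinates), so the appeal to Lemma~\ref{lemma:degenerate} and the hypergeometric-to-multinomial convergence should be phrased with a word on tightness or on passing to the compactification, though this matches the level of detail the paper itself adopts.
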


We now provide our three different proofs of the above proposition. 
Another proof is provided in \cite{LauEW}, by immersing the filtration 
in a filtration shown to be standard.

\subsection{First proof of standardness, using monotonicity of multidimensional Markov processes} 

%
%

Our first proof is an application of Theorem~\ref{thm:monotonicmulti}. 
Since we know that the tail sigma-algebra ${\cal G}_{-\infty}$ is trivial, 
it remains to show the monotonicity of the Markov process ${\left(V_n^{d,\theta}\right)}_{n\le0}$. 
Let us consider two points $v$ and $v'$ in $\Vb_n^d$: They satisfy
\[
  v(1)+\cdots+v(d)= v'(1)+\cdots+v'(d)=|n|.
\]
We want to construct a coupling of $\LL\left(V_{n+1}^{d,\theta}\given V_n^{d,\theta}=v\right)$ and $\LL\left(V_{n+1}^{d,\theta}\given V_n^{d,\theta}=v'\right)$ which is well-ordered with respect to $(v,v')$. We will get this coupling in the form $(Y,Y')=f_{n+1}(v,v',U)$, where $U$ is a uniform random variable on $\{1,\ldots,|n|\}$.
We can easily construct two partitions ${(A_i)}_{1\le i\le d}$ and ${(A'_i)}_{1\le i\le d}$ of $\{1,\ldots,|n|\}$ such that, for each $1\le i\le d$, $|A_i|=v(i)$, $|A'_i|=v'(i)$, and $A_i=A'_i$ whenever $v(i)=v'(i)$.
Now, for each $u\in\{1,\ldots,|n|\}$, there exists a unique pair $(i,i')$ such that $u\in A_i\cap A'_{i'}$ and we set   $f_{n+1}(v,v',u)\egdef (v-e_i,v'-e_{i'})$.
In this way, we respect the conditional distribution given in~\eqref{eq:multi-dim-Pascal}. Moreover, by construction it is clear that this coupling is well-ordered, since $v(i)=v'(i)$ implies $Y(i)=Y'(i)$, and at each step, coordinates never decrease by more than one unit.
Thus, since we know that ${\cal G}_{-\infty}$ is trivial, 
Theorem \ref{thm:monotonicmulti} applies and show that $\cal G$ is standard.

\subsection{Second proof of standardness, computing intrinsic metrics} 

In the preceding proof, we admitted the degeneracy of ${\cal G}_{-\infty}$. 
Here we provide an alternative short proof of standardness of 
the filtration which does not use this result. 
We have seen in the preceding proof that the Markov process is monotonic. 
It is even strongly monotonic (Definition~\ref{def:fullymonotonic}), thus  
we can use the tools of Section~\ref{sec:multimonotonic_kantorovich}. 
Moreover the Markov process is identifiable 
(see Section~\ref{sec:intrinsic}), hence 
Theorem~\ref{thm:fullymonotonicmulti} applies and then 
in order to derive standardness it suffices to check that 
$\LL(V_{-1} \given \FF_n) \to \LL(V_{-1})$, which is a straightforward 
consequence of the law of large numbers.  

We can use Proposition~\ref{ppsition:kantomulti} to derive the 
intrinsic metrics $\rho_n$, starting  at level $-1$. 
Remembering the unidimensional case, we get 
$$ \rho_n(v'_n, v''_n) = 
\sum_{i=1}^d a_i\frac{\bigl|v'_n(i)-v''_n(i)\bigr|}{|n|}.$$
The $V'$ property of $X_{-1}$ (Definition~\ref{lemma:VershikMarkov}), 
$$ \lim\limits_{n \to -\infty} \EE\bigl[\rho_n(V'_n, V''_n)\bigr] = 0,$$
is, similarly to $\LL(V_{-1} \given \FF_n) \to \LL(V_{-1})$, 
a straightforward consequence of the law of large numbers. 

\subsection{Third proof of standardness, constructing a generating parameterization}\label{sec:mpascal_standard1}

The third proof is a little bit longer, but it is self-contained (it does not use the degeneracy of  ${\cal G}_{-\infty}$, nor Theorem~\ref{thm:fullymonotonicmulti}). Moreover, it provides a generating parameterization of the 
$d$-dimensional Pascal filtration. 

We start by giving a natural parameterized representation of the Markov process ${(V_n^{d,\theta})}_{n \leq 0}$ and we will see that it is generating. 
We first introduce the notation 
$$\bar{v}(i)= \sum_{k=1}^i v(k)$$
for each $v \in \Vb_n^d$, any $n \leq 0$ and $i \in \{1, \ldots, d\}$. 
Recalling the Markovian transition from $n$ to $n+1$, we can easily construct a parameterized representation $(f_n,U_n)_{n\le 0}$  for the Markov process ${(V_n^{d,\theta})}_{n \leq 0}$ by taking the uniform distribution on 
$\{1, \ldots, |n|\}$ as the law of  $U_{n+1}$ and by defining the updating functions by 
$$ f_{n+1}\Bigl(v,u\Bigr) := v - e_i, $$
where $i$ is the unique index such that 
$u \in \bigl]\bar{v}(i-1), \bar{v}(i)\bigr]$

Now, we point out that, for each $1\le i\le d-1$, the process $\left(\bar{V}_n^{d,\theta}(i)\right)_{n\le0}$ is a Markov process with the same distribution as the process arising in the two-dimensional Pascal graph, that is, with our notations, 
$$
{\LL\left(\bar{V}_n^{d,\theta}(i)\right)}_{n\le0} 
= \LL{\left(V_n^{2,(p_i,1-p_i)}(i)\right)}_{n\le0}
$$
where $p_i:=\theta_1+\cdots+\theta_i$.
Moreover, the above parameterized representation of the Markov process ${(V_n^{d,\theta})}_{n \leq 0}$ provides a parameterized representation of the Markov process $\left(\bar{V}_n^{d,\theta}(i)\right)_{n\le0}$:
$$ \bar{V}_{n+1}^{d,\theta}(i) = f_{n+1}^{(i)}\Bigl(\bar{V}_n^{d,\theta}(i),U_{n+1}\Bigr) := 
\begin{cases}
  \bar{V}_{n}^{d,\theta}(i) -1 & \text{ if }U_{n+1} \leq \bar{V}_n^{d,\theta}(i) \\
  \bar{V}_{n}^{d,\theta}(i) & \text{ otherwise.}
\end{cases}
$$
This parameterization coincides with the increasing representation of the process that we used in the classical Pascal graph corresponding to $d=2$, hence as we have shown in Section~\ref{sec:Pascal}, Theorem~\ref{thm:monotonic} proves that it is a generating parameterization. It follows that for each $1\le i\le d$ and each $n\le0$, $\bar{V}_{n}^{d,\theta}(i)$ is measurable with respect to the $\sigma$-algebra generated by $U_n,U_{n-1},\ldots$. Thus 
$$ V_n^{d,\theta}(i) = \bar{V}_{n}^{d,\theta}(i)-\bar{V}_{n}^{d,\theta}(i-1) $$
is itself measurable with respect to the same $\sigma$-algebra, and the parameterized representation of the Markov process ${(V_n^{d,\theta})}_{n \leq 0}$ is generating. Lemma~\ref{lemma:generatingparam} then allows us to conclude that the $d$-dimensional Pascal filtration is standard.

\end{document}